\documentclass[10pt,reqno,b5paper]{amsart}%
\usepackage{anysize}
\marginsize{15mm}{15mm}{10mm}{10mm}
\usepackage[noadjust]{cite}
\usepackage[T1]{fontenc}
\usepackage[colorlinks,urlcolor=black,citecolor=blue,linkcolor=blue]{hyperref}
\usepackage{bm}
\usepackage{amsmath}
\usepackage{mathrsfs}
\usepackage{amsfonts}
\usepackage{amssymb}
\usepackage{amsthm}
\usepackage{graphicx}
\usepackage{cancel}
\usepackage{esint}
\DeclareSymbolFont{CM}{OMX}{cmex}{m}{n}
\DeclareMathSymbol{\sumop}{\mathop}{CM}{"50}
\renewcommand{\sum}{\sumop}

\allowdisplaybreaks[4]

\newtheorem{thm}{\sc Theorem}[section]

\newtheorem{lem}{\sc Lemma}[section]
\newtheorem{pro}{\sc Proposition}[section]
\theoremstyle{definition}
\newtheorem{defn}[thm]{\sc Definition}
\theoremstyle{remark}
\newtheorem{rem}{Remark}[section]
\newtheorem*{clm}{Claim}
\newtheorem*{pf}{Proof}
\newcommand{\Cref}[1]{\eqref{#1}}
\newcommand{\cref}[1]{(\ref{#1})}

\numberwithin{equation}{section}

\begin{document}
\title[Non-uniqueness for the compressible 3D MHD equations]
{Non-uniqueness for the Hypo-dissipative compressible 3D Magnetohydrodynamic Equations}
\author[H. B. Li \quad\&\quad P. Qu]
{Haobin Li $^a$, \quad Peng Qu $^b$}
\thanks{ E-mails: hbli25@m.fudan.edu.cn  (H. B. Li),\quad pqu@fudan.edu.cn (P. Qu).\\This research is supported in part by NSFC (Nos. 12431007, 62588101).}
\dedicatory{
\small
$^a$ School of Mathematical Sciences, \\Fudan University, Shanghai 200433, P. R. China \\
$^b$ School of Mathematical Sciences \\and Shanghai Key Laboratory for Contemporary Applied Mathematics, \\Fudan University, Shanghai 200433, P. R. China
}
\vspace{-5mm}
\begin{abstract}
We consider the compressible 3D magnetohydrodynamic (MHD) equations under general pressure laws. For all hypo-viscosities $(-\Delta)^{\alpha_1}$ and hypo-resistivity $(-\Delta)^{\alpha_2}$ with $\alpha_1,\alpha_2\in(0,1)$, we prove the non-uniqueness of weak solutions to 3D MHD equations which reveals that there exist infinitely many weak solutions with the same initial data.  Also, for the weak solutions in $C^{\widetilde{\beta}}_{t,x}$ to the compressible ideal MHD, where $\widetilde{\beta}>0$, we prove that they are the strong vanishing viscosity and resistivity limit  of the weak solutions to the hypo-dissipative compressible MHD.
\vskip 2mm
\noindent{Keywords.}  Compressible MHD equations; Non-uniqueness; Convex integration; Hypo-dissipation.
\end{abstract}
\maketitle
\tableofcontents
\section{Introduction and main results}
\subsection{Background}
In this paper, we are concerned with the three-dimensional compressible hypo-dissipative magnetohydrodynamic (MHD in short) equations on the torus $\mathbb{T}^3:=[-\pi,\pi]^3$, which couples the compressible Navier-Stokes equations with the Faraday-Maxwell system via Ohm's law. Mathematically, this system reads:
\begin{equation}\label{eq1.1}
     \left\{
    \begin{aligned}
        &\partial_t\rho+{\rm div}(\rho u)=0,\\
        &\partial_t(\rho u)+{\rm div}(\rho u\otimes u)+\nabla P(\rho)={\rm div}\mathbb{S}(u)+{\rm div}\left(\mu\left(b\otimes b-\frac{1}{2}|b|^2\mathbb{I}\right)\right),\\
        &\partial_tb+{\rm div}(b\otimes u-u\otimes b)+\eta\mu^{-1}{\rm curl}({\rm curl}b)=0,\\
        &{\rm div} b=0,
    \end{aligned}
    \right.
\end{equation}
where $\rho(t,x):[0,T]\times\mathbb{T}^3\rightarrow(0,+\infty)$, $u(t,x):[0,T]\times\mathbb{T}^3\rightarrow\mathbb{R}^3$ and $b(t,x):[0,T]\times\mathbb{T}^3\rightarrow\mathbb{R}^3$ are the mass density, velocity field and magnetic field, respectively, the terms in the right-hand-side of the momentum equations $\eqref{eq1.1}_2$ are the Newtonian part of the viscous stress and Lorentz force. Furthermore, $\mathbb{S}(u)$ reads as follows:
\begin{equation}\label{eq1.2}
    \mathbb{S}(u)=2\nu^s\left({\rm D}(u)-\frac{1}{3}{\rm div}u\mathbb{I}\right)+\nu^b{\rm div}u\mathbb{I},
\end{equation}
where ${\rm D}(u)=\frac{1}{2}(\nabla u+\nabla^Tu)$ is the deformation tensor. The coefficients $\mu$, $\eta$, $\nu^s$ and $\nu^b$ are all constants. $\mu$ is the magnetic permeability, $\eta$ is the resistivity, $\nu^s$ is the shear viscosity coefficient and $\nu^b$ is the bulk viscosity coefficient satisfying the physical assumptions
\begin{align*}
    \nu^s>0,\quad\nu^b+\frac{1}{3}\nu^s\geq0.
\end{align*}

The pressure $P(\rho)$ is a function of the density in a compressible fluid. It is typically assumed to satisfy a monotonicity condition $P(\rho)>0$, $P^{\prime}(\rho)>0$ and $P^{\prime\prime}(\rho)>0$ in the study of well-posedness, as in the classical case of polytropic gases $P(\rho)=A\rho^{\gamma}$ with $A>0$ and $\gamma>1$. In the present work, however, these monotonicity and convexity assumptions are no longer required, and we only assume that
\begin{equation}\label{eq1.3}
    P(\rho)\text{ is $C^2$-regular with respect to $\rho$.}
\end{equation}

Furthermore, to investigate a broader class of dissipative effects in the MHD equations, we shall consider the fractional Laplacian operator $(-\Delta)^{\alpha}$, $\alpha\in(0,1)$, which is defined by the Fourier  transform
\begin{align*}
    \mathcal{F}[(-\Delta)^{\alpha}u](\xi)=|\xi|^{2\alpha}\mathcal{F}[u](\xi),\quad\xi\in\mathbb{Z}^3,
\end{align*}
which plays an important role in applied research and has given rise to numerous models.

Then, based on equations \eqref{eq1.1}, the Newtonian stress \eqref{eq1.2} and the fractional Laplacian operator, we can rewrite the equations in the following form in terms of density, momentum, and magnetic field:
\begin{equation}\label{eq1.4}
    \left\{
    \begin{aligned}
        &\partial_t\rho+{\rm div}m=0,\\
        &\partial_tm+\nu^s(-\Delta)^{\alpha_1}(\rho^{-1}m)-(\nu^b+\frac{1}{3}\nu^s)\nabla{\rm div}(\rho^{-1}m)+\nabla P(\rho)\\
        &\qquad\quad\qquad+{\rm div}\left(\rho^{-1}m\otimes m-\mu\left(b\otimes b-\frac{1}{2}|b|^2\mathbb{I}\right)\right)=0,\\
        &\partial_tb+\eta\mu^{-1}(-\Delta)^{\alpha_2}b+{\rm div}\left(\rho^{-1}\left(b\otimes m-m\otimes b\right)\right)=0,\\
        &{\rm div}b=0,
    \end{aligned}
    \right.
\end{equation}
where $m:=\rho u$.

Moreover, we note that when the viscosity and resistivity vanish $(\nu^s=\nu^b=\eta=0)$, \eqref{eq1.4} reduces to the following compressible ideal MHD equations:
\begin{equation}\label{eq1.5}
    \left\{
    \begin{aligned}
        &\partial_t\rho+{\rm div}m=0,\\
        &\partial_tm+\nabla P(\rho)+{\rm div}\left(\rho^{-1}m\otimes m-\mu\left(b\otimes b-\frac{1}{2}|b|^2\mathbb{I}\right)\right)=0,\\
        &\partial_tb+{\rm div}\left(\rho^{-1}\left(b\otimes m-m\otimes b\right)\right)=0,\\
        &{\rm div}b=0.
    \end{aligned}
    \right.
\end{equation}

For the mathematical theory of the MHD equations, the pioneering work is Duvaut-Lions \cite{Duvaut-Lions-1972-arma}, which is regarded as establishing the basic analytical framework for viscous, resistive incompressible MHD. The subsequent work Sermange-Temam \cite{ST-1983-cpam} refined and systematized the classical incompressible MHD theory, including local well-posedness, two-dimensional global behavior, and structural regularity properties. For the ideal (non-dissipative) incompressible MHD, classical local well-posedness holds in sufficiently smooth Sobolev spaces. Moreover, in analogy with the Beale-Kato-Majda criterion for Euler flows, blow-up criteria have been established in terms of the vorticity and current density, see, for instance, Caflisch-Klapper-Steele \cite{CKS-1997-cmp}. On the regularity criteria and partial regularity of weak solutions in three dimensions, He-Xin established Serrin-type regularity criteria for weak solutions to the incompressible MHD \cite{HX-2005-jde}  and proved partial regularity results \cite{HX-2005-jfa}. Furthermore, it was proved that the two-dimensional incompressible MHD system can exhibit global regularity by Cao-Wu \cite{CW-2011-advance}, even with reduced or fractional magnetic diffusion.

Regarding the compressible MHD, Kawashima \cite{kawashima-1984-japanjam} proved global smooth solvability and asymptotic stability for small perturbations of constant states within the hyperbolic-parabolic framework. Fan-Yu \cite{FY-2009-rwa} extended the local strong solutions theory to compressible MHD with vacuum.  On the weak solution theory, Hu-Wang  \cite{HW-2010-arma} proved the existence and large-time behavior of global finite-energy weak solutions for the three-dimensional isentropic compressible MHD in a bounded domain. The small-data classical solutions theory was refined by Chen-Tan \cite{CT-2010-nonlinear}, who obtained global smooth solutions and convergence rates near equilibrium, and  Li-Yu \cite{LY-2011-PRSESA} derived optimal decay rates for classical solutions. A further development was the blow-up criteria, see for \cite{LDY-2011-jmaa,Z-2015-jmaa}.

The research on the flexible theory of MHD is relatively scarce, even in the incompressible case. The successful application of the convex integration method in fluid mechanics has provided an important tool for the study of flexible theories in MHD. Since the groundbreaking works \cite{DS-2009-ann,DS-2010-arma} by De Lellis and Sz\'ekelyhidi on the existence of infinitely many solutions to incompressible Euler equations, there have been significant progress towards the non-uniqueness problem for various fluid models in the last decade. One milestone is the resolution of the
flexible part of Onsager’s conjecture ($C^{\frac{1}{3}-}_{t,x}$-dissipation) for incompressible Euler equations, by Isett \cite{Isett-2018-ann} and Buckmaster-De
Lellis-Sz\'ekelyhidi-Vicol \cite{BDS-2019-cpam}. Regarding the incompressible fluid models with viscosity, in the breakthrough work \cite{BV-2019-annmath} Buckmaster-Vicol proved the non-uniqueness of weak solutions to the 3D incompressible Navier-Stokes equations. Recently, Li-Qu-Zeng-Zhang \cite{LQZZ-2024-jmpa} proved the sharp non-uniqueness to 3D incompressible Navier-Stokes equation with hyper-dissipation ($\alpha\in[1,2)$) in supercritical space $L^{\gamma}_tW^{s,p}_x$, and Cheskidov-Dai-Palasek \cite{CDP-2026-arxiv} obtained instantaneous Type I blow-up and non-uniqueness of smooth solutions of the Navier-Stokes equations via inverse energy cascade.

For incompressible MHD, Bronzi-Lopes Filho--Nussenzveig \cite{BLN-2015-cms} firstly gave the weak solutions with non-trivial energy and vanishing magnetic helicity. Moreover, the Wild solutions with compact support in space-time were constructed by Faraco-Lindberg-Sz\'ekelyhidi \cite{FLS-2021-arma}, via $L^{\infty}_{t,x}$ convex integration scheme. Also, Faraco-Lindberg-Sz\'ekelyhidi \cite{FLS-2021-arma} built a weak solution whose magnetic helicity is conserved, but it violates the energy conservation. The magnetic helicity is commonly expected in the plasma physics to be conserved in the infinite conductivity limit, known as Taylor’s conjecture (see \cite{Taylor-1974}) which was proved by  Faraco-Lindberg \cite{FL-2020-cmp}.  For general weak solutions, Beekie-Buckmaster-Vicol \cite{BBV-2020-annpde} established a $C_tL^2_x$ weak solution violating the conservation of magnetic helicity via the intermittent flow. In addition, the non-uniqueness of dissipative MHD has been obtained by Li-Zeng-Zhang \cite{LZZ-2022-jmpa,LZZ-2024-jfa} and Nie-Ye \cite{NY-2025-jns}, and recently, the Onsager-type conjecture of ideal MHD has also been proved by Faraco-Lindberg-Sz\'ekelyhidi \cite{FLS-2024-cpam} and Miao-Nie-Ye \cite{MNY-2025-annpde}. A subsequent development is that \cite{EPP-2025-arxiv} established anomalous dissipation in ideal MHD with nonzero magnetic helicity when the H\"older exponent is less than $\frac{1}{200}$. Recently, Giardi-Sz\'ekelyhidi \cite{GS-2026-arxiv} have sharpened it to $C_{t,x}^{\frac{1}{5}-}$, representing a significant advance in the Onsager-type conjecture for non-vanishing magnetic helicity. More recently, for instance, Chen-Liu-Yin \cite{CLY-2026-arxiv} proved the non-uniqueness of smooth solutions of the 5D-MHD from critical data in $BMO^{-1}$, while Dai \cite{Dai-2026-arxiv} obtained instantaneous blow-up and non-uniqueness of smooth solutions.

Even more scarce is the research on the flexible theory of compressible MHD. It is because the relative rigidity of pressure $\nabla P(\rho)$ must be taken into account that the large errors cannot be absorbed. Currently, only Feireisl-Li \cite{FL-2020-nonlinearity} provided a result for inviscid case. 

To the best of our knowledge, the flexible theory of \eqref{eq1.4} remains open and Theorem \ref{thm-nonuniquenes-1} below gives the first result for the non-uniqueness of weak solutions to the 3D compressible MHD with both hypo-viscosity and hypo-resistivity.

\subsection{Main results}
To begin with, let us formulate precisely the definition of weak solutions in the distributional sense to the equations \eqref{eq1.4}.
\begin{defn}\label{def-weak-solutions}(Weak solutions). Let $T\in(0,+\infty)$. Given any initial datum $\rho_0\in L^{\infty}(\mathbb{T}^3)$, $\rho_0>0$, and $m_0,b_0\in H^{-1}(\mathbb{T}^3)$ with ${\rm div}b_0=0$, we say that $(\rho,m,b)\in L^{\infty}([0,T]\times\mathbb{T}^3)\times \left(L^{2}([0,T]\times\mathbb{T}^3)\right)^2$ is a weak solution for hypo-dissipative compressible MHD equations \eqref{eq1.4} if
\begin{itemize}
    \item $\rho\geq0$ a.e. and 
    \begin{align*}
        \int_{\mathbb{T}^3}\rho_0(x)\psi(0,x){\rm d}x=-\int_0^T\int_{\mathbb{T}^3}\rho\partial_t\psi+(m\cdot\nabla)\psi{\rm d}x{\rm d}t,
    \end{align*}
    for any test function $\psi\in C^{\infty}_0([0,T)\times\mathbb{T}^3;\mathbb{R})$.\\
    \item $m=0$ whenever $\rho=0$, and
    \begin{align*}
        \int_{\mathbb{T}^3}m_0\cdot\psi(0,x){\rm d}x&
        =-\int_0^T\int_{\mathbb{T}^3}m\cdot\partial_t\psi-\rho^{-1}m\cdot\left(\nu^s(-\Delta)^{\alpha_1}\psi-(\nu^b+\frac{1}{3}\nu^s)\nabla{\rm div}\psi\right)+P{\rm div}\psi{\rm d}x{\rm d}t\\
        &\quad-\int_0^T\int_{\mathbb{T}^3}\left(\rho^{-1}m\otimes m-\mu(b\otimes b-\frac{1}{2}|b|^2\mathbb{I})\right):\nabla\psi{\rm d}x{\rm d}t,
    \end{align*}
    for any test function $\psi\in C^{\infty}_0([0,T)\times\mathbb{T}^3;\mathbb{R}^3)$, where $m_0=\rho_0u_0$.\\
    \item $b$ is divergence-free and 
    \begin{align*}
        \int_{\mathbb{T}^3}b_0\cdot\psi(0,x){\rm d}x=-\int_0^T\int_{\mathbb{T}^3}b\cdot\partial_t\psi-\eta\mu^{-1}b\cdot\left((-\Delta)^{\alpha_2}\psi\right)+\rho^{-1}\left(b\otimes m-m\otimes b\right):\nabla\psi{\rm d}x{\rm d}t,
    \end{align*}
    for any test function $\psi\in C^{\infty}_0([0,T)\times\mathbb{T}^3;\mathbb{R}^3)$.
\end{itemize}
\end{defn}

The non-uniqueness of weak solutions for hypo-dissipative compressible MHD \eqref{eq1.4} is summarized in the following Theorem \ref{thm-nonuniquenes-1}.
\begin{thm}\label{thm-nonuniquenes-1}
    Let $\alpha_1,\alpha_2\in(0,1)$. Let $\widetilde{b}$ be any divergence-free vector field and $(\widetilde{\rho},\widetilde{m})$ be any smooth solution to the transport equation
    \begin{equation}\label{eq1.6s}
        \partial_t\widetilde{\rho}+{\rm div}\widetilde{m}=0,    \end{equation}
  on $\mathbb{T}^3$, such that $0<c_*\leq\widetilde{\rho}(t,x)\leq C_*$ for all $(t,x)\in[0,T]\times\mathbb{T}^3$, where $c_*$ and $C_*$ are both universal constants. Then, there exists $\beta^{\prime}\in(0,1)$, such that for any $\varepsilon_*>0$ and any exponents $(p,s)$ satisfying 
  \begin{equation}\label{eq1.7s}
      \alpha+s-\frac{2\alpha}{p}<0,\quad (p,s)\in[1,2]\times[0,1),\quad\alpha:={\rm max}\{\alpha_1,\alpha_2\},
  \end{equation}
  there exist $(\rho,m,b)$ to \eqref{eq1.4} such that the following holds:
 
(i) Weak solutions: $(\rho, m,b)$ is the weak solution to \eqref{eq1.4} in the sense of Definition \ref{def-weak-solutions} but with $m_0,b_0 \in H_x^{-1}$.

(ii) Regularity:
$$
\rho \in C_t C_x^1, \quad m,b \in H_t^{\beta^{\prime}} C_x \cap L_t^p C_x^s \cap C_t H_x^{-1} .
$$

(iii) Mass preservation:
$$
\int_{\mathbb{T}^3} \rho(t, x) \mathrm{d} x=\int_{\mathbb{T}^3} \widetilde{\rho}(t, x) \mathrm{d} x,\quad \forall t \in[0, T] .
$$

(iv) Small deviation of norms and relative magnetic helicity:
$$
\begin{aligned}
    &\|\rho-\widetilde{\rho}\|_{C_t C_x^1} \leq \varepsilon_*,\quad\|m-\widetilde{m}\|_{L_t^1 C_x \cap L_t^p C_x^s \cap C_t H_x^{-1}} \leq \varepsilon_*,\\
    &\|b-\widetilde{b}\|_{L_t^1 C_x \cap L_t^p C_x^s \cap C_t H_x^{-1}} \leq \varepsilon_*,\quad\|\mathcal{H}_{\rm rel}(b\mid \bar b)-\mathcal{H}_{\rm rel}(\widetilde{b}\mid \bar {\widetilde{b}})\|_{L^1_t} \leq \varepsilon_*,
\end{aligned}
$$
where relative magnetic helicity is defined as
$$
\mathcal{H}_{\rm rel}(b\mid \bar b)(t)
:=
\int_{\mathbb{T}^3}
\widehat{a}(t,x)\cdot
b^{\diamond}(t,x){\rm d}x,
\quad
 b^{\diamond}:=b-\fint_{\mathbb{T}^3}b{\rm d}x,\quad {\rm curl}\widehat{a}=b^{\diamond}.
$$

(v) Small deviation of temporal support: if $\widetilde{T}:=\inf _{t \in[0, T]}\{t \mid \nabla \widetilde{\rho}(t, \cdot) \neq 0, \widetilde{m}(t, \cdot)\neq 0, \widetilde{b}(t,\cdot)-b_* \neq 0\}>0$, then 
$$
\operatorname{supp}_t(\nabla \rho, m,b-b_*) \subseteq N_{\varepsilon_*}([\widetilde{T}, T]),
$$
where 
$$
b_*=\fint_{\mathbb{T}^3}b(0,x){\rm d}x.
$$
Note that, for any $A \subseteq[0, T]$ and $\varepsilon_*>0, N_{\varepsilon_*}(A)$ denotes the $\varepsilon_*$-neighborhood of $A$ in $[0, T]$, namely,
$$
N_{\varepsilon_*}(A):=\left\{t \in[0, T]: \exists s \in A \text {, s.t. }|t-s| \leq \varepsilon_*\right\} .
$$
\end{thm}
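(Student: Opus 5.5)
We will prove Theorem \ref{thm-nonuniquenes-1} by an intermittent convex integration scheme for a relaxed system, following the compressible Navier--Stokes strategy but adapted to the coupled magnetic equation. At each level $q\in\mathbb{N}$ we consider approximate solutions $(\rho_q,m_q,b_q,\mathring{R}_q,\mathring{R}^B_q)$ that satisfy the continuity equation \eqref{eq1.6s} exactly. The momentum equation in \eqref{eq1.4} carries a symmetric Reynolds stress $\mathring{R}_q$ on its right-hand side. The induction equation carries an antisymmetric magnetic stress $\mathring{R}^B_q$, with $\mathrm{div}\, b_q=0$ maintained.

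\textbf{Initial datum and pressure.} We start from $(\rho_0,m_0,b_0)=(\widetilde\rho,\widetilde m,\widetilde b)$ and compute the errors this produces, including the dissipative terms. The pressure $\nabla P(\widetilde\rho)$ would be rigid if absorbed in the momentum stress. Instead we keep the density essentially fixed and adjust it only by small smooth corrections $\rho_{q+1}-\rho_q$ compatible with \eqref{eq1.6s}. The pressure gradient is then absorbed by inverting divergence, $\nabla P(\rho_q)=\mathrm{div}(P(\rho_q)\mathbb{I})$, and treated as part of the (non-traceless) stress. Since $P$ is only $C^2$, the density corrections are built from mollified smooth quantities, which is why only $\rho\in C_tC^1_x$ is claimed.

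\textbf{Iterative proposition.} The core step is an iterative proposition with frequency parameters $\lambda_q=a^{b^q}$, stress sizes $\|\mathring R_q\|_{L^1_{t,x}}+\|\mathring R^B_q\|_{L^1_{t,x}}\le \lambda_q^{-\varepsilon_R}\delta_{q+1}$, and $C^N$ bounds on $(m_q,b_q)$ growing polynomially in $\lambda_q$.

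Given level $q$, we proceed in four steps.
\begin{enumerate}
\item Mollify, then glue on short time intervals. Gluing localizes temporal supports, which yields item (v), and provides the $H^{\beta'}_t$ regularity.
\item Construct the velocity and magnetic perturbations from space-time intermittent building blocks in the spirit of Beekie--Buckmaster--Vicol and Li--Zeng--Zhang. These are intermittent shear flows with temporal oscillations, built along directions $(k,k_1,k_2)$, for which $w\otimes d-d\otimes w$ and $w\otimes w-d\otimes d$ can be decomposed separately. The magnetic stress is cancelled first by an antisymmetric geometric lemma. The remaining symmetric stress, which includes the new magnetic contribution, is then cancelled by the symmetric geometric lemma.
\item Divide by $\rho_q$ where needed, writing $m_{q+1}=m_q+\rho_q^{1/2}$-weighted amplitudes, so that $\rho^{-1}m\otimes m$ produces the right low-frequency term. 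Add divergence correctors and temporal correctors to keep $\mathrm{div}\, b_{q+1}=0$ and the continuity equation valid.
\item Estimate the new stresses: linear, oscillation, corrector, commutator from $\rho^{-1}$, and dissipative errors.
\end{enumerate}

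\textbf{Main obstacle.} The dissipative error $\nu^s(-\Delta)^{\alpha_1}(\rho_q^{-1}w)$, together with the induction analogue, must be small in $L^1$ after inverting divergence. Heuristically it scales like $\lambda^{2\alpha-1}$ times the $L^1$ size of the intermittent block. With concentration parameters $r_\perp,r_\parallel,\mu$ we plan to choose the spatial intermittency weak and the temporal intermittency strong, concentrating in time as in Cheskidov--Luo. This way the $L^1_t$ norm is tiny while the $L^2$ scaling still cancels the stress. The same choice yields the constraint \eqref{eq1.7s}: in $L^p_tC^s_x$ the perturbation has size about $\lambda^{s}\tau^{1/2-1/p}$ up to the blocks' normalization, and the admissible $\tau\sim\lambda^{2\alpha}$ forced by the dissipation balances this exactly when $\alpha+s-2\alpha/p<0$. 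Since $\alpha<1$, the hypo-dissipation is weaker than the transport term, so there is room to choose the parameters. The commutator between $(-\Delta)^{\alpha}$ and multiplication by $\rho_q^{-1}$ will be handled by a Kato--Ponce type estimate on $\mathbb{T}^3$.

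\textbf{Conclusion.} The sum $\sum_q$ converges in $C_tH^{-1}_x\cap L^p_tC^s_x\cap L^1_tC_x\cap H^{\beta'}_tC_x$, and the stresses go to zero in $L^1$, so the limit is a weak solution in the sense of Definition \ref{def-weak-solutions}. This gives items (i) and (ii). Mass preservation (iii) holds because each density correction has zero mean. The smallness in (iv) follows by taking $a$ large, so that $\sum_q\delta_{q+1}^{1/2}\le\varepsilon_*$.

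For the helicity part of (iv), write $\widehat a$ via the Biot--Savart-type inverse $\mathrm{curl}^{-1}$ on mean-zero fields. Then $|\mathcal H_{\rm rel}(b)-\mathcal H_{\rm rel}(\widetilde b)|\lesssim \|b-\widetilde b\|_{H^{-1}}\big(\|b\|_{L^2}+\|\widetilde b\|_{L^2}\big)$ pointwise in $t$. Integrating in time and using the $C_tH^{-1}_x$ smallness together with $L^2_{t,x}$ bounds, this is at most $\varepsilon_*$ after shrinking parameters.

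Non-uniqueness follows by applying the construction to two different $(\widetilde\rho,\widetilde m,\widetilde b)$ that agree near $t=0$ but differ later. By (v) the resulting solutions share initial data yet differ, by (iv), for small $\varepsilon_*$.
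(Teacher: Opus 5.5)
There is a genuine gap: your plan never deals with the isotropic low-frequency term left by the symmetric cancellation, and that term is the central compressible difficulty. The symmetric geometric lemma only decomposes matrices close to $\mathbb{I}$. So the best you can arrange is $\sum_{k}\rho_l^{-1}a_{(k)}^2 g_{(k)}^2 W_{(k)}\otimes W_{(k)}\approx \varrho\,\mathbb{I}-\mathring R_q$ with $\varrho\gtrsim\delta_{q+1}$ (cf.\ the remark after \eqref{eq3.43}).

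In incompressible schemes $\nabla\varrho$ is absorbed into the pressure. Here the pressure is $P(\rho)$, and $\rho$ is pinned by the continuity equation up to small corrections, so it cannot absorb a spatially varying term of size $\delta_{q+1}$. A density change of that size forces, through $\partial_t\rho=-\mathrm{div}\,m$, an extra momentum field whose time derivative is not small; moreover $P$ is not assumed monotone.

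Your remark that $\nabla P(\rho_q)=\mathrm{div}(P(\rho_q)\mathbb{I})$ can sit in a non-traceless stress only handles the increment $P(\rho_{q+1})-P(\rho_l)$. That increment is small because the density correction is small. Leaving $\varrho\,\mathbb{I}$ in the new stress would give an error larger than the old one, and the iteration does not close.

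The paper's remedy is to let the amplitude depend on time only:
$\varrho_m(t)=2\varepsilon_m^{-1}\delta_{q+1}\chi\bigl(\|R^m_l(t)+G^b(t)\|_{C_x}/\delta_{q+1}\bigr)$.
Then $\varrho_m(t)f_m^2(t)\mathbb{I}$ is divergence-free and simply drops out. This choice conflicts with your setup in three ways:
\begin{enumerate}
\item Since $\varrho_m(t)$ must dominate $|R(t,x)|$ pointwise for the geometric lemma, the stresses must be controlled in $L^1_tC_x$, not $L^1_{t,x}$.
\item In $L^1_tC_x$, spatial concentration only enlarges the errors. So the building blocks must be spatially homogeneous Mikado flows, with all intermittency temporal ($g_{(k)}$ with $\tau=\lambda_{q+1}^{2\alpha-10\varepsilon}$).
\item Different directions are then decoupled by disjoint temporal supports of the $g_k$, not by spatial disjointness.
\end{enumerate}

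Two smaller points. First, the gluing step is unnecessary, and it clashes with your requirement that the mass equation hold exactly, since time cut-offs of densities generate terms $\chi_i'\rho_i$. Item (v) follows by tracking supports: the cut-offs $f_m,f_b$ live in an $l$-neighbourhood of the old stress support, which gives $T_{q+1}\ge T_q-\delta_{q+2}^{1/2}$. The $H^{\beta'}_tC_x$ regularity comes from interpolating the $L^2_tC_x$ decay $\delta_{q+1}^{1/2}$ against the $C^1_{t,x}$ growth, not from gluing.

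Second, you need to explain why $\rho_{q+1}-\rho_q$ is small in $C_tC^1_x$. In the paper the principal part plus incompressibility corrector is an exact $\mathrm{curl}\,\mathrm{curl}$, so only the temporal corrector (of size $\sigma^{-1}$) has divergence. Hence $z_{q+1}=-\int_0^t\mathrm{div}\,w_{q+1}\,{\rm d}s=O(l^{-C}\sigma^{-1})$, which also makes the pressure error small.

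The rest of your plan is compatible with the paper: the treatment of the dissipative terms (you use Kato--Ponce, the paper uses interpolation), the $L^p_tC^s_x$ scaling, mass preservation, and the helicity estimate.
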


As a consequence, Theorem \ref{thm-nonuniquenes-1} gives the following result concerning the non-uniqueness of weak solutions
and the non-conservation of relative magnetic helicity.
\begin{thm}\label{thm-nonuniqueness-2}
    There exist smooth initial datum $\rho_0>0$, $m_0$ and $b_0$ such that for any exponents $(p,s)$ satisfying \eqref{eq1.7s}, there exist infinitely many weak solutions $(\rho,m,b)\in C_tC_x^1\times\left(L^p_tC^s_x\right)^2$ to the hypo-dissipative compressible MHD \eqref{eq1.4} with the same initial datum $(\rho_0,m_0,b_0)$ and do not conserve the relative magnetic helicity.
\end{thm}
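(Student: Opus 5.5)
Theorem \ref{thm-nonuniqueness-2} follows from Theorem \ref{thm-nonuniquenes-1} through the temporal support property (v) and the helicity closeness (iv), applied to a well-chosen family of reference triples $(\widetilde\rho,\widetilde m,\widetilde b)$. Take the initial data to be a constant state: $\rho_0\equiv\bar\rho>0$, $m_0\equiv0$, $b_0\equiv b_*$ with $b_*$ a constant vector. This state is a smooth (stationary) solution of \eqref{eq1.4}, since all nonlinear and dissipative terms vanish on constants. It is therefore one weak solution, and its relative magnetic helicity is identically zero because $b^\diamond=0$.

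For each $k\in\mathbb{N}$ we build reference fields. Fix a cutoff $\chi_k\in C^\infty([0,T])$ with $\chi_k=0$ on $[0,T/2+T/(4k)]$ and $\chi_k=1$ near $T$. Set $\widetilde b_k=b_*+\chi_k(t)B$, where $B$ is a smooth, divergence-free, mean-zero field with nonzero helicity; a Beltrami field such as $B=(\sin x_3,\cos x_3,0)$ works, since ${\rm curl}B=B$ gives $\mathcal{H}_{\rm rel}(B\mid\bar B)=\int|B|^2\neq0$. The helicity of $\widetilde b_k$ is then $\chi_k(t)^2\,c_B$ with $c_B\ne0$. Take $\widetilde\rho_k\equiv\bar\rho$ and $\widetilde m_k=\chi_k(t)M$ for a nonzero divergence-free smooth $M$, so that \eqref{eq1.6s} holds and $c_*\le\widetilde\rho_k\le C_*$.

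With this choice $\widetilde T_k\ge T/2+T/(4k)>0$. Apply Theorem \ref{thm-nonuniquenes-1} with $\varepsilon_*=\varepsilon_k$ small. Property (v) gives $\nabla\rho_k=0$, $m_k=0$ and $b_k=b_*$ on $[0,\widetilde T_k-\varepsilon_k)$. Mass preservation (iii) then forces $\rho_k\equiv\bar\rho$ there. Consequently every $(\rho_k,m_k,b_k)$ has the same initial datum $(\bar\rho,0,b_*)$; the mean of $b_k(0)$ equals $b_*$ by construction, and the weak formulation with data $(\rho_0,m_0,b_0)$ holds because the solution coincides with the constant state near $t=0$. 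Regularity in $C_tC^1_x\times(L^p_tC^s_x)^2$ for any admissible $(p,s)$ comes directly from (ii).

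It remains to show the solutions are infinitely many and do not conserve helicity. By (iv), $\|\mathcal{H}_{\rm rel}(b_k)-\chi_k^2c_B\|_{L^1_t}\le\varepsilon_k$. Choose $\varepsilon_k<\tfrac12|c_B|\int_0^T\chi_k^2\,dt$. Then $\mathcal{H}_{\rm rel}(b_k)(t)$ is not identically zero in $t$, while it vanishes near $t=0$, so helicity is not conserved. Distinctness is handled by the choice of cutoffs: $\chi_k$ switches on at time $T/2+T/(4k)$, and taking $\varepsilon_k<T/(8k^2)$ separates the support sets $N_{\varepsilon_k}([\widetilde T_k,T])$ from one another. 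Alternatively, the $L^1_t$ helicity profiles $\chi_k^2c_B$ differ from each other by more than $\varepsilon_k+\varepsilon_j$, so the corresponding solutions differ. Either argument yields infinitely many distinct solutions.

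The main obstacle is a technical point in the theorem's hypotheses. The infimum defining $\widetilde T$ must be read as the first time any of $\nabla\widetilde\rho$, $\widetilde m$, $\widetilde b-b_*$ is nonzero. The $b_*$ in (v) is defined from the output solution $b$, not from $\widetilde b$, so we must check that it agrees with the reference constant. This holds because $\fint b$ is conserved by the $b$-equation (it is in divergence form, and the dissipation term has zero mean), and its value is pinned by the $C_tH^{-1}_x$ closeness together with the support statement. If this identification were delicate, it could be fixed by taking $\widetilde b$ with mean $b_*$ throughout and invoking conservation of the mean.
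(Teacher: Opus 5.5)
Your choice of the constant datum $(\bar\rho,0,b_*)$ is the same as the paper's. So is using (v) together with (iii) to force every output solution to start from that datum, and so is the helicity argument via (iv).

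\textbf{The gap: distinctness.} This is the step that ``infinitely many'' actually requires, and neither of your arguments establishes it.
\begin{itemize}
\item Property (v) is only an \emph{upper} bound on the temporal support of the output. The sets $N_{\varepsilon_k}([\widetilde T_k,T])=[\widetilde T_k-\varepsilon_k,T]$ are nested intervals ending at $T$, so they are never ``separated''. Even if the support bounds differed, that would not make two solutions different: nothing in (v) prevents $m_k$ and $b_k-b_*$ from vanishing on $[\widetilde T_k-\varepsilon_k,\widetilde T_j-\varepsilon_j)$ and coinciding with $(m_j,b_j)$ afterwards.
\item To use such a window you need a quantitative lower bound, e.g.\ $\int_I\|\widetilde m_k\|_{C_x}\,{\rm d}t>\varepsilon_k$ on a window $I$ below $\widetilde T_{k-1}-\varepsilon_{k-1}$, where (v) forces $m_j\equiv0$ for all $j<k$. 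Then (iv), restricted to $I$, gives $m_k\not\equiv0$ there. That bound depends on how fast $\chi_k$ switches on and on $\|M\|_{C_x}$; your condition $\varepsilon_k<T/(8k^2)$ involves neither.
\item Your alternative, that $|c_B|\,\|\chi_k^2-\chi_j^2\|_{L^1_t}>\varepsilon_k+\varepsilon_j$ for all $j\neq k$, is asserted, not checked. It holds only once the cutoffs are fixed. For instance, take $\chi_k=\chi(\cdot-\tfrac{T}{4k})$ for a single nondecreasing profile $\chi$ that vanishes on $[0,T/2]$ and equals $1$ on $[5T/8,T]$. Then $\|\chi_k^2-\chi_j^2\|_{L^1_t}=\tfrac{T}{4}|\tfrac1k-\tfrac1j|\ge\tfrac{T}{4k(k+1)}$, and choosing $\varepsilon_k<|c_B|T/(16k(k+1))$ closes the argument.
\end{itemize}

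\textbf{How the paper avoids this.} It scales amplitudes instead of shifting switch-on times. It fixes one profile $\Psi$ and takes
$\widetilde m_j=\tfrac{j}{c_0}\widetilde m$ and $\widetilde b_j=\tfrac{j}{c_1}\widetilde b+(0,0,1)^{\rm T}$, with $c_0=\|\widetilde m\|_{L^1_tC_x}$ and $c_1=\|\widetilde b\|_{L^1_tC_x}$.
Then $\|\widetilde m_j-\widetilde m_k\|_{L^1_tC_x}=|j-k|$. A single $\varepsilon_*<\min\{1/8,T/8\}$ gives $\|m_j-m_k\|_{L^1_tC_x}\ge|j-k|-2\varepsilon_*>1/2$ directly from (iv). The common support bound $[T/8,T]$ also yields the common initial datum immediately.

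\textbf{A smaller point about $b_*$.} The identification of $b_*=\fint_{\mathbb{T}^3}b(0,x)\,{\rm d}x$ with the mean of $\widetilde b$ does not follow from $C_tH^{-1}_x$-closeness ``together with the support statement''. That reasoning is circular, and $H^{-1}$-closeness gives only approximate equality of the means. The identification holds exactly for a different reason. Each $b_q$ has a time-independent mean, since its equation is in divergence form and $(-\Delta)^{\alpha_2}$ kills the mean; $b_l$ inherits that mean; and every $d_{q+1}$ is mean-free. Hence $\fint b\equiv\fint\widetilde b$.
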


Furthermore, we also consider the strong vanishing viscosity and resistivity limit, which relates the hypo-dissipative compressible MHD and ideal compressible MHD.
\begin{thm}\label{thm-vanishing limit}(strong vanishing viscosity and resistivity limit). Let $\alpha_1,\alpha_2\in(0,1)$ and $\rho,m,b\in C^{\widetilde{\beta}}_{t,x}$, $\widetilde{\beta}>0$, be any weak solution to the compressible ideal MHD \eqref{eq1.5}, such that $c_1\leq\rho\leq c_2$ for some constants $c_1,c_2>0$. Then there exist $\beta^{\prime}\in(0,\widetilde{\beta})$ and a sequence of weak solutions $(\rho^{(n)},m^{(n)},b^{(n)})\in C_{t,x}\times \left(H^{\beta^{\prime}}_tC_x\right)^2$ to the hypo-dissipative compressible MHD \eqref{eq1.4} with viscous coefficients $\kappa_n\nu^s$ and $\kappa_n\nu^b$ and resistivity $\kappa_n\eta$ such that
\begin{align}\label{eq1.8s}
    \rho^{(n)}\rightarrow\rho\text{ strongly in }C_{t,x},\text{ and }m^{(n)},b^{(n)}\rightarrow m,b\text{ strongly in }H^{\beta^{\prime}}_tC_x\text{ as }\kappa_n\rightarrow0.
\end{align}
\end{thm}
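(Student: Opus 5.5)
The plan is to obtain Theorem \ref{thm-vanishing limit} as a by-product of the iterative convex integration scheme that also yields Theorem \ref{thm-nonuniquenes-1}, started from the given ideal solution instead of an arbitrary transport pair. For each $n$ we fix $\kappa_n\to0$ and regard the Hölder solution $(\rho,m,b)$ of \eqref{eq1.5} as an approximate solution of \eqref{eq1.4} with coefficients $\kappa_n\nu^s,\kappa_n\nu^b,\kappa_n\eta$. We first mollify in space and time at a scale $\ell_n$ to get smooth $(\rho_\ell,m_\ell,b_\ell)$ with $c_1/2\le\rho_\ell\le 2c_2$, with $\partial_t\rho_\ell+{\rm div}m_\ell=0$ and ${\rm div}b_\ell=0$. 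The momentum and induction equations are then satisfied up to stresses
\[
\mathring R_{0}=\kappa_n\mathcal R\Big(\nu^s(-\Delta)^{\alpha_1}(\rho_\ell^{-1}m_\ell)-(\nu^b+\tfrac13\nu^s)\nabla{\rm div}(\rho_\ell^{-1}m_\ell)\Big)+\text{(commutator terms)},
\]
and an analogous antisymmetric stress for $b$. Here $\mathcal R$ is the inverse-divergence operator, and the commutator terms are of the form $(\rho^{-1}m\otimes m)_\ell-\rho_\ell^{-1}m_\ell\otimes m_\ell$, the Lorentz-force commutator, and $P(\rho)_\ell-P(\rho_\ell)$. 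Since $\rho,m,b\in C^{\widetilde\beta}_{t,x}$ and $P\in C^2$, the commutator estimate gives a bound $\lesssim\ell^{2\widetilde\beta}$ in $C_{t,x}$. The dissipative part is $\lesssim\kappa_n\ell^{-2}$, since $\alpha_i<1$. Choosing $\ell_n$ and $\kappa_n$ so that $\kappa_n\ell_n^{-2}+\ell_n^{2\widetilde\beta}\le\delta_n\to0$ makes the initial Reynolds and magnetic stresses as small as we like. The pressure error $P(\rho)_\ell-P(\rho_\ell)$ cannot be absorbed by a trace part in general, so we handle it separately. We perturb $\rho$ by a small smooth correction $\rho_\ell+\widetilde\rho_n$, with the matching $m$-correction chosen to keep mass conservation exact. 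We make $\widetilde\rho_n$ of size $O(\ell^{2\widetilde\beta})$ in $C^1$, or alternatively move the pressure commutator into the stress via $\mathcal R\nabla$, which is bounded in $C^\epsilon$.

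Next we invoke the main iteration proposition (the one used to prove Theorem \ref{thm-nonuniquenes-1}). Its input is a smooth relaxed solution with positive density and stresses of size $\delta$, and its output is an exact weak solution of \eqref{eq1.4}. The density moves by at most $O(\delta^{1/2})$ in $C_tC^1_x$, and the velocities move by at most $O(\delta^{1/2})$ in $H^{\beta'}_tC_x$ (the norm the scheme controls). We must check that the proposition's constants depend only on $c_1,c_2$, on bounds of $\|(\rho_\ell,m_\ell,b_\ell)\|_{C^N}$ (which blow up like $\ell^{-N}$), and on $\kappa_n\le1$. We also check that the iteration can start with a first frequency $\lambda_0\gg\ell_n^{-1}$, so these growing norms are harmless. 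Because the dissipative coefficients only enter through terms like $\kappa_n\lambda_q^{2\alpha}\times$(perturbation amplitude), with $\alpha<1$ and the intermittent building blocks, the estimates become better when $\kappa_n\le1$.

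Finally we combine everything through the triangle inequality:
\[
\|m^{(n)}-m\|_{H^{\beta'}_tC_x}\le\|m^{(n)}-m_{\ell_n}\|_{H^{\beta'}_tC_x}+\|m_{\ell_n}-m\|_{H^{\beta'}_tC_x}\lesssim\delta_n^{1/2}+\ell_n^{\widetilde\beta-\beta'}\to0,
\]
and the same bound holds for $b$. The $\rho$ part works the same way in $C_{t,x}$ using $\|\rho-\rho_\ell\|_{C}\lesssim\ell^{\widetilde\beta}$. The condition $\beta'<\widetilde\beta$ is exactly what the mollification step needs. The main obstacle is the uniformity in $n$: the convex integration proposition must accept a starting triple whose high norms diverge as $\ell_n\to0$, and it must still yield a perturbation controlled by the initial stress size alone. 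The hardest piece within this is the pressure/density update, because $\nabla P(\rho)$ is rigid. I would first try to keep the density perturbation at each step of size comparable to the stress and absorb $\nabla(P(\rho_{q+1})-P(\rho_q))$ into the next stress via $\mathcal R$, as in the proof of Theorem \ref{thm-nonuniquenes-1}. I would take $\beta'$ smaller than both the scheme's $\beta'$ and $\widetilde\beta$.
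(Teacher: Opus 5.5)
Your plan is essentially the paper's proof. The paper mollifies $(\rho,m,b)$ at scale $\lambda_n^{-1}$ with $\kappa_n=\lambda_n^{-2}$. It puts the damped dissipative terms, the pressure commutator $\mathcal{R}^m\nabla(P(\rho_n)-P_n)$ (no density correction is needed) and the nonlinear commutators into $R^m_n,R^b_n$. It checks \eqref{eq2.5}--\eqref{eq2.9} at level $q=n+1$, runs Theorem \ref{thm-main-iteration} from there, and concludes by the triangle inequality in $H^{\beta'}_tC_x$ and $C_{t,x}$.

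The one quantitative point you leave implicit is matching the starting stress to the scheme's amplitude. That stress has size a power of $\ell_n$ fixed by $\widetilde\beta$, and it must lie below $\delta_{n+2}$ at a starting frequency $\lambda_{n+1}=\ell_n^{-b}$, which is polynomially larger than $\ell_n^{-1}$. The paper secures this by taking the scheme's $\beta$ small relative to $\widetilde\beta$, namely $\beta b^2<\widetilde\beta/4$ in \eqref{eq5.36s}.
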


\noindent\textbf{Organization.} Now we introduce the organization of the paper.  In Section \ref{sec2}, we present the main iteration estimates, which play a crucial role in the subsequent proof of Theorem \ref{thm-nonuniquenes-1}. The mollification procedure is also employed to avoid the loss of derivatives. In Section \ref{sec3}, we construct the key building blocks and apply them to the construction of the momentum, magnetic field, and density perturbations. Corresponding estimates and certain cancellation identities are provided. In Section \ref{sec4}, we focus on the treatment of the Reynolds stress and magnetic stress, and prove the relevant inductive estimates. Finally, in Section \ref{sec5}, we establish the main results.
\\ \hspace*{\fill}\\
\noindent\textbf{Notations.}
We denote for $ p \in[1, \infty], s \in \mathbb{R}, N \in \mathbb{N}$ and $\eta \in(0,1)$,
$$
L_t^p:=L^p(0, T), \quad L_x^p:=L^p(\mathbb{T}^3), \quad C_x^N:=C^N(\mathbb{T}^3), \quad C_x^{N, \eta}:=C^{N, \eta}(\mathbb{T}^3), \quad W_x^{s, p}:=W^{s, p}(\mathbb{T}^3),
$$
where $W_x^{s, p}$ is the usual Sobolev space, $C_x^{N, \eta}$ is the H\"older space equipped with the norm
$$
\|u\|_{C_x^{N, \eta}}:=\sum_{0 \leq|\zeta| \leq N}\left\|\nabla^\zeta u\right\|_{C_x}+\max _{|\zeta|=N} \sup _{x \neq y \in \mathbb{T}^3} \frac{\left|\nabla^\zeta u(x)-\nabla^\zeta u(y)\right|}{|x-y|^\eta},
$$
and $\zeta=\left(\zeta_1, \zeta_2,\zeta_3\right)$ is the multi-index with $\nabla^\zeta:=\partial_{x_1}^{\zeta_1} \partial_{x_2}^{\zeta_2}\partial_{x_3}^{\zeta_3}$. When $N=0$, we denote $C_x^\eta:=C^{0, \eta}\left(\mathbb{T}^3\right)$ for brevity. We also use the shorthand notation $L_t^\gamma L_x^p$ to denote $L^\gamma\left(0, T ; L^p\left(\mathbb{T}^3\right)\right)$, where $p, \gamma \in[1, \infty]$. In particular, we write $L_{t, x}^p:=L_t^p L_x^p$ for short. Moreover, let
$$
\|u\|_{W_{t, x}^{N, p}}:=\sum_{0 \leq m+|\zeta| \leq N}\left\|\partial_t^m \nabla^\zeta u\right\|_{L_{t, x}^p}, \quad\|u\|_{C_{t, x}^N}:=\sum_{0 \leq m+|\zeta| \leq N}\left\|\partial_t^m \nabla^\zeta u\right\|_{C_{t, x}},
$$
Given any Banach space $X, C([0, T] ; X)$ denotes the space of continuous functions from $[0, T]$ to $X$, equipped with the norm $\|u\|_{C_t X}:=\sup _{t \in[0, T]}\|u(t)\|_X$.

We would also write $A \lesssim B$ to imply that $A \leq C B$ for some constant $C>0$ independent of the parameters $q$, $a$, $b$ and $\beta$.
\section{Main iteration and mollification}\label{sec2}
In this section, we will introduce the main iteration of the density, momentum, magnetic field and the Reynolds and magnetic stress, which is the heart of the proof of main results.

\subsection{Main iteration}
For each integer $q\in\mathbb{N}$, we consider the following relaxation system
\begin{equation}\label{eq2.1}
    \left\{
    \begin{aligned}
        &\partial_t\rho_q+{\rm div}m_q=0,\\
        &\partial_tm_q+\nu^s(-\Delta)^{\alpha_1}(\rho^{-1}_qm_q)-(\nu^b+\frac{1}{3}\nu^s)\nabla{\rm div}(\rho^{-1}_qm_q)+\nabla P(\rho_q)\\
        &\qquad+{\rm div}\left(\rho_q^{-1}m_q\otimes m_q-\mu\left(b_q\otimes b_q-\frac{1}{2}|b_q|^2\mathbb{I}\right)\right)={\rm div}R^m_q,\\
        &\partial_tb_q+\eta\mu^{-1}(-\Delta)^{\alpha_2}b_q+{\rm div}\left(\rho_q^{-1}\left(b_q\otimes m_q-m_q\otimes b_q\right)\right)={\rm div}R^b_q,\\
        &{\rm div}b_q=0,
    \end{aligned}
    \right.
\end{equation}
where $R^m_q$ and $R^b_q$ are the Reynolds stress ($3\times3$ symmetric matrix) and magnetic stress ($3\times3$ skew-symmetric matrix) respectively.

In order to measure the size of relaxed solutions $(\rho_q,m_q,b_q,R^m_q,R^b_q)$ accurately, we need to carefully select the frequency parameter $\lambda_q$ and amplitude parameter $\delta_q$. More precisely, let $\alpha={\rm max}\left\{\alpha_1,\alpha_2\right\}$. $a\in\mathbb{N}$ is a sufficiently large integer to be determined later, $\varepsilon\in\mathbb{Q}$ is sufficiently small such that
\begin{equation}\label{eq2.2s}
    \varepsilon\leq\frac{1}{20}{\rm min}\left\{1-\alpha,\alpha,\frac{2\alpha}{p}-\alpha-s\right\},\quad \varepsilon b\in\mathbb{N},
\end{equation}
with $b\in2\mathbb{N}$ is a large even number and $\beta>0$ is the regularity parameter such that
\begin{equation}\label{eq2.3s}
    b>\frac{1000}{\varepsilon},\quad0<\beta<\frac{1}{100b^2}.
\end{equation}
For $q\in\mathbb{N}$, the frequency parameter $\lambda_q$ and amplitude parameter $\delta_q$ are defined by
\begin{equation}\label{eq2.4}
    \lambda_q=a^{(b^q)},\quad \delta_q=\lambda_q^{-2\beta}\lambda_1^{3\beta},
\end{equation}
where the constant $a>0$ will be chosen later. Furthermore, we assume that the following crucial inductive estimates hold for the relaxed solutions to \eqref{eq2.1} at level $q\in\mathbb{N}$:
\begin{align}
    &C_1+\lambda_q^{-\beta}\leq\rho_q\leq C_2-\lambda_q^{-\beta},\label{eq2.5}\\
    &\|\partial_t^M\rho_q\|_{C_tC^N_x}\lesssim\lambda_q^{\frac{N\varepsilon}{4}},\label{eq2.6}\\
    &\|m_q\|_{C^N_{t,x}}+\|b_q\|_{C^N_{t,x}}\lesssim\lambda_q^{2N+2},\label{eq2.7}\\
    &\|R^m_q\|_{C^1_{t,x}}+\|R^b_q\|_{C^1_{t,x}}\lesssim\lambda_q^9,\label{eq2.8}\\
    &\|R^m_q\|_{L^1_tC_x}+\|R^b_q\|_{L^1_tC_x}\lesssim\delta_{q+1},\label{eq2.9}
\end{align}
where $1\leq N\leq4$, $0\leq M\leq1$, $C_1$ and $C_2$ are positive universal constants, and the implicit constants are independent of $q$.

The main iteration result is contained in the following theorem.
\begin{thm}\label{thm-main-iteration} (Main iteration). 
    Let $\alpha={\rm max}\left\{\alpha_1,\alpha_2\right\}$ and $(p,s)$ satisfy \eqref{eq1.7s}. Then, there exist $\beta\in(0,1)$ and $a_0$ large enough, such that for any integer $a\geq a_0$, the following holds:

    Suppose that $(\rho_q,m_q,b_q,R^m_q,R^b_q)$ solves \eqref{eq2.1} and satisfies \eqref{eq2.5}--\eqref{eq2.9}. Then, there exists a new relaxation solution $(\rho_{q+1},m_{q+1},b_{q+1},R^m_{q+1},R^b_{q+1})$ to \eqref{eq2.1} which satisfies \eqref{eq2.5}--\eqref{eq2.9} with $q+1$ replacing $q$ and the following estimates:
    \begin{align}
        &\int_{\mathbb{T}^3}\rho_{q+1}(t,x){\rm d}x=\int_{\mathbb{T}^3}\rho_{q}(t,x){\rm d}x,\quad\forall t\in[0,T],\label{eq2.10}\\
        &\|\rho_{q+1}-\rho_q\|_{C_tC_x^1}\lesssim\delta_{q+2}^{\frac{1}{2}},\label{eq2.11}\\
        &\|m_{q+1}-m_q\|_{L^2_tC_x}+\|b_{q+1}-b_q\|_{L^2_tC_x}\lesssim\delta_{q+1}^{\frac{1}{2}},\label{eq2.12}\\
        &\|m_{q+1}-m_q\|_{L^1_tC_x\cap L^p_tC^s_x\cap C_tH^{-1}_x}+\|b_{q+1}-b_q\|_{L^1_tC_x\cap L^p_tC^s_x\cap C_tH^{-1}_x}\lesssim\delta_{q+2}^{\frac{1}{2}},\label{eq2.13}
\end{align}
where the implicit constants are independent of $q$. In addition, concerning the temporal support of relaxed solutions, we denote
\begin{equation}\label{eq2.14}
    T_q:=\inf\left\{t\in[0,T]\mid \left(\nabla\rho_q,m_q,b_q-b_*,R^m_q,R^b_q\right)(t,\cdot)\neq0 \right\}.
\end{equation}
for all $q\in\mathbb{N}$ where $b_*=\fint_{\mathbb{T}^3}b(0,x){\rm d}x$. Then, if $T_0>0$, it holds that
\begin{equation}\label{eq2.15}
    T_{q+1}\geq T_q-\delta_{q+2}^{\frac{1}{2}}>0.
\end{equation}
\end{thm}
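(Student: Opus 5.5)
We follow the now-standard intermittent convex integration scheme for compressible fluids with relaxed density (in the spirit of Buckmaster--Vicol and the compressible constructions of Li--Qu--Zeng--Zhang type), adapted to the coupled MHD structure with a symmetric Reynolds stress $R^m_q$ and a skew-symmetric magnetic stress $R^b_q$. The step $q\mapsto q+1$ proceeds in five stages.

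\textbf{Mollification.} First mollify $(\rho_q,m_q,b_q,R^m_q,R^b_q)$ in space and time at scale $\ell=\lambda_q^{-20}$, using a temporal mollifier supported in the past so that the temporal support is not enlarged backward. This gives $(\rho_\ell,m_\ell,b_\ell,R^m_\ell,R^b_\ell)$ solving \eqref{eq2.1} with an additional commutator stress. Estimates \eqref{eq2.6}--\eqref{eq2.8} and the choice of $\ell$ make this commutator of size $\ll\delta_{q+2}$ in $L^1_tC_x$. The density bound \eqref{eq2.5} passes to $\rho_\ell$.

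\textbf{Perturbations.} The magnetic perturbation $w^b_{q+1}$ is built first. It uses intermittent (space--time concentrated) Mikado-type building blocks whose directions lie in a geometric lemma for skew-symmetric matrices, with amplitudes $\sim(\rho_\ell^{-1}$-weighted$)\,|R^b_\ell|^{1/2}$. It is made divergence-free via a curl form, with corrector $w^{(c)}_b$. The velocity/momentum perturbation $w^m_{q+1}$ then cancels the symmetric stress
\[
\rho_\ell\mathbb{I}\chi-R^m_\ell-\mu\bigl(w^b\otimes w^b\bigr)_{\rm low}.
\]
Its amplitudes come from the geometric lemma for symmetric matrices near the identity, with a cutoff $\chi$ defined via $\|R^m_\ell\|$ so that only $L^1_t$ smallness is needed. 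Temporal intermittency comes from oscillating time cutoffs $g_{(k)}$ of concentration $\tau$, together with temporal correctors that absorb the low-frequency part of $\partial_t(\text{flux})$. The density perturbation $\rho_{q+1}-\rho_q$ is chosen as a slowly varying correction. It is a small multiple $\lambda_{q+1}^{-\varepsilon}$-type term solving $\partial_t\tilde\rho+\operatorname{div}\tilde w=0$ with mean zero, which gives \eqref{eq2.10}. Its size $\lesssim\delta_{q+2}^{1/2}$ in $C_tC^1_x$ gives \eqref{eq2.11} and preserves \eqref{eq2.5}, since $\lambda_{q+1}^{-\beta}+\delta_{q+2}^{1/2}\le\lambda_q^{-\beta}$. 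Then \eqref{eq2.12}--\eqref{eq2.13} follow from $L^2$-normalized building blocks, from $L^1_t$ and $L^p_tC^s_x$ gains via temporal intermittency (exactly where $\alpha+s-2\alpha/p<0$, i.e.\ \eqref{eq1.7s}, is used), and from $H^{-1}$ gains via high frequency. Estimates \eqref{eq2.6}--\eqref{eq2.7} at level $q+1$ are direct derivative counts.

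\textbf{New stresses.} Next define $R^m_{q+1}$ and $R^b_{q+1}$ through the inverse divergence operators $\mathcal{R}$ (symmetric) and $\mathcal{R}^{b}$ (skew). They are decomposed into linear, oscillation, corrector, commutator, pressure-difference and dissipative parts. The pressure term $\nabla(P(\rho_{q+1})-P(\rho_\ell))$ is bounded using only $P\in C^2$ and the smallness of the density increment. The nonlinear terms containing $\rho_{q+1}^{-1}-\rho_\ell^{-1}$ are handled by Taylor expansion.

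\textbf{Main obstacle.} The key difficulty is the dissipative error $\nu^s(-\Delta)^{\alpha_1}(\rho^{-1}w^m)$, the analogous $(-\Delta)^{\alpha_2}w^b$, and the $\nabla\operatorname{div}$ term. The $\nabla\operatorname{div}$ term is full second order, so it must be shown to vanish to leading order. This is done by making the principal part of $\rho^{-1}w^m$ essentially divergence-free (building $w^m$ as $\rho_\ell$ times a divergence-free field plus a corrector), so that only lower-order terms remain. The fractional terms are bounded in $L^1_tC_x$ by
\[
\lambda_{q+1}^{2\alpha-1}\cdot(\text{spatial }L^1\text{ gain})\cdot(\text{temporal }L^1\text{ gain }\tau^{-1/2}).
\]
The parameters (spatial concentration, $\tau$) must be chosen so that this is $\le\lambda_{q+1}^{-\varepsilon}\delta_{q+2}$, which is feasible precisely because $\alpha<1$, with $\varepsilon$ as in \eqref{eq2.2s}. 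The other critical balance is the temporal oscillation error against this dissipative error. Finally, $C^1_{t,x}$ bounds of size $\lambda_{q+1}^9$ follow by crude counting, giving \eqref{eq2.8}.

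\textbf{Temporal support.} For \eqref{eq2.15}, all perturbations are localized where $R^m_\ell,R^b_\ell$ are supported. The backward mollification moves supports by at most $\ell\ll\delta_{q+2}^{1/2}$, and the cutoffs are chosen accordingly, giving $T_{q+1}\ge T_q-\delta_{q+2}^{1/2}$.
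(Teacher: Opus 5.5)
There is a genuine gap in your momentum step. You let the symmetric geometric lemma cancel $\rho_\ell\mathbb{I}\chi-R^m_\ell-\ldots$. The low-frequency part of the new flux then leaves the isotropic residual $\rho_\ell\chi\mathbb{I}$.

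In the incompressible scheme this residual is harmless, because its gradient goes into the pressure. In \eqref{eq2.1} there is no free pressure. $\nabla P(\rho_{q+1})$ is fixed by the density, the density is slaved to the exact continuity equation, and $P$ is only $C^2$ with no monotonicity, so it could not be inverted anyway.

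Hence $\operatorname{div}(\rho_\ell\chi\mathbb{I})=\chi\nabla\rho_\ell$ must be put into $R^m_{q+1}$. It is a low-frequency term, so there is no oscillation to exploit. Its size is comparable to $\delta_{q+1}\|\nabla\rho_\ell\|_{C_x}$, and $\nabla\rho_\ell=O(1)$ for a general $\widetilde\rho$. So it is not $\lesssim\delta_{q+2}$, and none of your error categories covers it.

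The missing idea is to make the residual a function of time alone. The paper defines $\varrho_m(t)$ from $\|R^m_l(t)+G^b(t)\|_{C_x}$ and takes amplitudes $a_{(k)}=f_m(t)\varrho_m^{1/2}(t)\rho_l^{1/2}\gamma_{(k)}\bigl(\mathbb{I}-(R^m_l+G^b)/\varrho_m\bigr)$. The factor $\rho_l^{1/2}$ cancels the $\rho_l^{-1}$ in the flux, so \eqref{eq3.43} leaves exactly $\varrho_m(t)f_m^2(t)\mathbb{I}$, which is divergence-free.

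For the same reason, every spatially varying isotropic piece must be fed into the geometric lemma. The magnetic step needs paired blocks $W_{(k)},D_{(k)}$ for $k\in\Lambda_b$, with the same $\rho_l^{1/2}$ weighting as in \eqref{eq3.25}. The momentum amplitudes must then absorb the whole low mode $G^b$ of this pair, including $\rho_l^{-1}a_{(k)}^2k_1\otimes k_1$ and the part $\frac{\mu}{2}a_{(k)}^2\mathbb{I}$, not just $\mu(w^b\otimes w^b)_{\rm low}$.

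Two further choices conflict with the norms in the statement.

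\textbf{Spatial intermittency.} The iteration measures stresses in $L^1_tC_x$ and increments in $L^2_tC_x$ and $L^1_tC_x$. An $L^2_x$-normalized block concentrated in space has sup norm larger by the concentration factor. So \eqref{eq2.12} fails, and the ``spatial $L^1$ gain'' you invoke for the dissipative term does not exist in $C_x$. The paper instead uses spatially homogeneous Mikado flows $\phi(\lambda_{q+1}N_\Lambda k\cdot x)$ and puts all intermittency in time, with $\tau=\lambda_{q+1}^{2\alpha-10\varepsilon}$ and $\sigma=\lambda_{q+1}^{15\varepsilon}$. The dissipative error is then, up to powers of $l$, $\tau^{-1/2}\lambda_{q+1}^{\max\{2\alpha-1,0\}+\varepsilon}$. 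The bound \eqref{eq2.12} comes from temporal $L^p$-decorrelation. Planar flows in different directions intersect, so the $g_{(k)}$ are given pairwise disjoint temporal supports to kill cross terms; your proposal does not address this.

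\textbf{Which field is solenoidal.} It must be the momentum increment $w^{(p)}_{q+1}+w^{(c)}_{q+1}$, in curl-curl form, not $\rho^{-1}w$. Because the continuity equation is exact, $z_{q+1}=-\int_0^t\operatorname{div}w_{q+1}\,{\rm d}s$. A high-frequency divergence such as $\nabla\rho_\ell\cdot v$, coming from $w=\rho_\ell v$ with $\operatorname{div}v=0$ as your parenthetical suggests, makes $\|z_{q+1}\|_{C_tC^1_x}$ of order $\lambda_{q+1}\tau^{-1/2}$. This is $\gg1$, since the time-derivative error already forces $\sigma\tau^{1/2}\ll\lambda_{q+1}$, and it destroys \eqref{eq2.6} and \eqref{eq2.11}.

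With the paper's choice, only the low-frequency temporal corrector $w^{(o)}_{q+1}$ feeds the density, giving $\|z_{q+1}\|_{C_tC^N_x}\lesssim l^{-10N-28}\sigma^{-1}$. The $\nabla\operatorname{div}$ term is then harmless: $\operatorname{div}(\rho_l^{-1}w_{q+1})=\nabla\rho_l^{-1}\cdot w_{q+1}+\rho_l^{-1}\operatorname{div}w^{(o)}_{q+1}$ is small in $L^1_tC^\varepsilon_x$.
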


The heart of the proof of Theorem \ref{thm-main-iteration} is to construct suitable perturbations
\begin{align*}
    w_{q+1}\simeq m_{q+1}-m_q,\quad d_{q+1}\simeq b_{q+1}-b_q,\quad z_{q+1}\simeq\rho_{q+1}-\rho_q.
\end{align*}
In order to avoid the loss of derivatives in the convex integration scheme, we need to perform the mollification procedure to the relaxed system \eqref{eq2.1}.
\subsection{Mollification procedure}
Let $\phi_{\epsilon}$ and $\varphi_{\epsilon}$ be the standard mollifiers on $\mathbb{R}^3$ and $\mathbb{R}$ with ${\rm supp}\phi_{\epsilon}\subseteq B(0,\varepsilon)$ and ${\rm supp}\varphi_{\epsilon}\subseteq(-\epsilon,\epsilon)$ respectively. Then, the mollifications of $(\rho_q,m_q,b_q,R^m_q,R^b_q)$ in space and time are defined by
\begin{equation}\label{eq2.16}
    \begin{aligned}
        &\rho_{l}:=(\rho_q*_x\phi_l)*_t\varphi_l,\quad P_l:=(P(\rho_q)*_x\phi_l)*_t\varphi_l,\\
        &m_l:=(m_q*_x\phi_l)*_t\varphi_l,\quad b_l:=(b_q*_x\phi_l)*_t\varphi_l,\\
        &R^m_l:=(R^m_q*_x\phi_l)*_t\varphi_l,\quad R^b_l:=(R^b_q*_x\phi_l)*_t\varphi_l,
    \end{aligned}
\end{equation}
where the scale of mollification is given by
\begin{align}\label{eq2.17}
    l:=\lambda_q^{-30}.
\end{align}
Then, by equations \eqref{eq2.1}, $(\rho_l,m_l,b_l,R^m_l,R^b_l)$ satisfies
\begin{equation}\label{eq2.18}
    \left\{
    \begin{aligned}
        &\partial_t\rho_l+{\rm div}m_l=0,\\
        &\partial_tm_l+\nu^s(-\Delta)^{\alpha_1}(\rho^{-1}_lm_l)-(\nu^b+\frac{1}{3}\nu^s)\nabla{\rm div}(\rho^{-1}_lm_l)+\nabla P(\rho_l)\\
        &\qquad+{\rm div}\left(\rho_l^{-1}m_l\otimes m_l-\mu\left(b_l\otimes b_l-\frac{1}{2}|b_l|^2\mathbb{I}\right)\right)={\rm div}\left(R^m_l+R^m_{{\rm com}}\right),\\
        &\partial_tb_l+\eta\mu^{-1}(-\Delta)^{\alpha_2}b_l+{\rm div}\left(\rho_l^{-1}\left(b_l\otimes m_l-m_l\otimes b_l\right)\right)={\rm div}\left(R^b_l+R^b_{{\rm com}}\right),\\
        &{\rm div}b_l=0,
    \end{aligned}
    \right.
\end{equation}
where the symmetric commutator stress $R^m_{{\rm com}}$ and the  skew-symmetric commutator stress $R^b_{{\rm com}}$ are of form
\begin{align}
    &\begin{aligned}
        R^m_{{\rm com}}:=
        &\nu^s\mathcal{R}^m(-\Delta)^{\alpha_1}\left(\rho_l^{-1}m_l-((\rho_q^{-1}m_q)*_x\phi_l)*_t\varphi_l\right)+\mathcal{R}^m\nabla\left(P(\rho_l)-P_l\right)\\
        &-(\nu^b+\frac{1}{3}\nu^s)\mathcal{R}^m\nabla{\rm div}\left(\rho_l^{-1}m_l-((\rho_q^{-1}m_q)*_x\phi_l)*_t\varphi_l\right)\\
        &+\mathcal{R}^m{\rm div}\left(\rho_l^{-1}m_l\otimes m_l-\mu(b_l\otimes b_l-\frac{1}{2}|b_l|^2\mathbb{I})\right)\\
        &-\mathcal{R}^m{\rm div}\left(\left(\rho_q^{-1}m_q\otimes m_q-\mu(b_q\otimes b_q-\frac{1}{2}|b_q|^2\mathbb{I})\right)*_x\phi_l\right)*_t\varphi_l,
    \end{aligned}\label{eq2.19}\\
    & R^b_{{\rm com}}:=\mathcal{R}^b{\rm div}\left(\rho_l^{-1}(b_l\otimes m_l-m_l\otimes b_l)-\left(\left(\rho_q^{-1}(b_q\otimes m_q-m_q\otimes b_q)\right)*_x\phi_l\right)*_t\varphi_l\right),\label{eq2.20}
\end{align}
with $\mathcal{R}^m$ and $\mathcal{R}^b$ being the symmetric and skew-symmetric inverse-divergence operator respectively, given by \eqref{eq4.1} and \eqref{eq4.2} below.

By the standard mollification estimates and inductive estimates \eqref{eq2.5}--\eqref{eq2.9}, for any integers $1\leq N\leq4$ and $0\leq M\leq1$,
\begin{align}
    &C_1+\frac{1}{2}\lambda_q^{-\beta}\leq\rho_l\leq C_2-\frac{1}{2}\lambda_q^{-\beta},\label{eq2.21}\\    &\|\partial^M_t\rho_l\|_{C_tC^N_x}\lesssim\|\partial^M_t\rho_q\|_{C_tC^N_x}\lesssim\lambda_q^{\frac{N\varepsilon}{4}}, \label{eq2.22}\\
    &\|\rho_l-\rho_q\|_{C_tC^{N-1}_x}\lesssim l\|\rho_q\|_{C^1_tC^{N-1}_x}+l\|\rho_q\|_{C_tC^{N}_x}\lesssim l\lambda_q^{\frac{N\varepsilon}{4}}, \label{eq2.23}\\
    &\|m_l\|_{C^N_{t,x}}+\|b_l\|_{C^N_{t,x}}\lesssim l^{-N+1}\left(\|m_q\|_{C^1_{t,x}}+\|b_q\|_{C^1_{t,x}}\right)\lesssim l^{-N+1}\lambda_q^4,\label{eq2.24}\\
    &\|m_l-m_q\|_{C_tC^{N-1}_x}+\|b_l-b_q\|_{C_tC^{N-1}_x}\lesssim l\left(\|m_q\|_{C^N_{t,x}}+\|b_q\|_{C^N_{t,x}}\right)\lesssim l\lambda_q^{2N+2}, \label{eq2.25}\\
    &\|R^m_l\|_{C^N_{t,x}}+\|R^b_l\|_{C^N_{t,x}}\lesssim l^{-N+1}\left(\|R^m_q\|_{C^1_{t,x}}+\|R^b_q\|_{C^1_{t,x}}\right)\lesssim l^{-N},\label{eq2.26}\\
    &\|R^m_l\|_{L^1_tC_x}+\|R^b_l\|_{L^1_tC_x}\lesssim \|R^m_q\|_{L^1_tC_x}+\|R^b_q\|_{L^1_tC_x}\lesssim\delta_{q+1}.\label{eq2.27}
\end{align}
Moreover, concerning the temporal support of $\nabla\rho_l$, $m_l$, $b_l-b_*$, $R^m_l$ and $R^b_l$, we have
\begin{align}\label{eq2.28}
    {\rm supp}_t(\nabla\rho_l,m_l,b_l-b_*,R^m_l,R^b_l)\subseteq N_l\left({\rm supp}_t(\nabla\rho_q,m_q,b_q-b_*,R^m_q,R^b_q)\right).
\end{align}
\section{Perturbations}\label{sec3}
The aim of this section is to construct appropriate momentum, magnetic and density perturbations, such that the corresponding inductive estimates in Theorem \ref{thm-main-iteration} propagate through in the convex integration scheme.

The fundamental building blocks in the convex integration will be indexed by the following parameters
\begin{align}\label{eq3.1}
    \lambda:=\lambda_{q+1},\quad\tau:=\lambda_{q+1}^{2\alpha-10\varepsilon},\quad\sigma:=\lambda_{q+1}^{15\varepsilon},
\end{align}
where $\alpha={\rm max}\{\alpha_1,\alpha_2\}\in(0,1)$, and $\varepsilon$ is the small constant satisfying \eqref{eq2.2s}.
\subsection{Spatial building blocks}
To begin with, let us recall two geometric lemmas in \cite{BBV-2020-annpde} and \cite{LZZ-2022-jmpa} which will be used to construct the basic spatial building blocks, namely, the Mikado flows.

\begin{lem}\label{lem-1st-geometric}(First Geometric Lemma). There exists a finite set $\Lambda_b\subseteq\mathbb{S}^2\cap\mathbb{Q}^3$ consisting of vectors $k$ with orthonormal bases $(k,k_1,k_2)\in \mathbb{S}^2\cap\mathbb{Q}^3$ such that, for some $\varepsilon_b>0$, and smooth positive functions $\gamma_{(k)}:B_{{\rm skew},3}(0,\varepsilon_b)\rightarrow\mathbb{R}$, where $B_{{\rm skew},3}(0,\varepsilon_b)$ is the ball of radius $\varepsilon_b$ centered at $0$ in the space of $3\times3$ skew-symmetric matrices, such that for any $A\in B_{{\rm skew},3}(0,\varepsilon_b)$ we have the following identity:
\begin{align}\label{eq3.2}
    A=\sum_{k\in\Lambda_b}\gamma^2_{(k)}(A)(k_2\otimes k_1-k_1\otimes k_2).
\end{align}
\end{lem}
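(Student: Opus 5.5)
The plan is to reduce the skew-symmetric decomposition to a linear-algebra statement about $\mathbb{R}^3$. Every $3\times3$ skew-symmetric matrix $A$ corresponds to a vector $a\in\mathbb{R}^3$ via $A v = a\times v$. For an orthonormal, positively oriented triple $(k,k_1,k_2)$ we have $k_2\otimes k_1-k_1\otimes k_2 \leftrightarrow \pm k$, since $(k_2\otimes k_1-k_1\otimes k_2)v=k_2(k_1\cdot v)-k_1(k_2\cdot v)=(k_1\times k_2)\times v$ up to sign. After fixing orientations, the identity \eqref{eq3.2} becomes the vector identity
\begin{align*}
a=\sum_{k\in\Lambda_b}\gamma_{(k)}^2(A)\,\epsilon_k k ,
\end{align*}
with signs $\epsilon_k\in\{\pm1\}$ that we can absorb by replacing $k$ with $-k$ (which changes the frame $(k,k_1,k_2)$ to $(-k,k_2,k_1)$, still rational and orthonormal). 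So it suffices to write every small vector $a$ as a combination of fixed rational unit vectors with smooth positive coefficients.

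For the choice of $\Lambda_b$, take the six vectors $\pm e_1,\pm e_2,\pm e_3$, each with a rational orthonormal completion, e.g. $(e_1,e_2,e_3)$, $(-e_1,e_3,e_2)$, and so on. Because $\sum_{k\in\Lambda_b}k=0$, we can use a constant positive base weight. Set
\begin{align*}
\gamma_{(\pm e_i)}^2(A):=\frac12\pm\frac12\,a_i ,
\end{align*}
where $a=(a_1,a_2,a_3)$ is the vector associated to $A$. Then
\begin{align*}
\sum_{k}\gamma_{(k)}^2 k=\sum_i\Bigl(\tfrac12+\tfrac12 a_i-\tfrac12+\tfrac12 a_i\Bigr)e_i=a .
\end{align*}
Choosing $\varepsilon_b<1$, so that $|a_i|\le|A|<1$ on $B_{{\rm skew},3}(0,\varepsilon_b)$, each $\gamma^2_{(k)}$ is bounded below by a positive constant. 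Hence $\gamma_{(k)}=\sqrt{\cdot}$ is smooth and positive. This is the same mechanism as the classical geometric lemma used in \cite{BBV-2020-annpde}, and we would cite it for the precise statement.

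The only delicate point is bookkeeping of signs and orientation: we must check that $k_2\otimes k_1-k_1\otimes k_2$ corresponds exactly to $\pm k$ and not to some other vector. It also matters for later sections that one keeps $k\in\mathbb{S}^2\cap\mathbb{Q}^3$ together with rational $k_1,k_2$, since rationality guarantees the periodicity of Mikado flows on $\mathbb{T}^3$. To settle the sign, we would compute $(k_2\otimes k_1-k_1\otimes k_2)_{ij}$ on $(e_1,e_2,e_3)$ explicitly and then flip frames as needed. If one also wants disjoint supports later, the vectors can be chosen distinct from those in the symmetric (second) geometric lemma, for example by using a rotated rational frame; this does not affect the argument above.
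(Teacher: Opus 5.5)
Your proposal is correct. The paper gives no proof of its own and simply imports the lemma from \cite{BBV-2020-annpde} and \cite{LZZ-2022-jmpa}. Your construction is a complete, self-contained proof of the standard kind behind that citation: take $\Lambda_b=\{\pm e_i\}$ with positively oriented rational frames, so that $k_2\otimes k_1-k_1\otimes k_2$ is the cross-product matrix of $k_1\times k_2=k$; then set $\gamma^2_{(\pm e_i)}=\tfrac12\pm\tfrac12 a_i$, which is bounded below by $\tfrac12(1-\varepsilon_b)>0$ once $\varepsilon_b<1$.

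One small slip: the parenthetical swap $(k,k_1,k_2)\mapsto(-k,k_2,k_1)$ preserves orientation. It therefore flips the contributed vector together with the label rather than absorbing a sign. This is harmless, because the frames you actually list, $(e_1,e_2,e_3)$, $(-e_1,e_3,e_2)$, and so on, are all positively oriented and give $k_1\times k_2=k$ as required.
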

\begin{lem}\label{lem-2nd-geometric}(Second Geometric Lemma). There exists a finite set $\Lambda_m\subseteq\mathbb{S}^2\cap\mathbb{Q}^3$ consisting of vectors $k$ with orthonormal bases $(k,k_1,k_2)\in \mathbb{S}^2\cap\mathbb{Q}^3$ such that, for some $\varepsilon_m>0$, and smooth positive functions $\gamma_{(k)}:B_{{\rm sym},3}(\mathbb{I},\varepsilon_m)\rightarrow\mathbb{R}$, where $B_{{\rm sym},3}(\mathbb{I},\varepsilon_m)$ is the ball of radius $\varepsilon_m$ centered at the identity matrix $\mathbb{I}$ in the space of $3\times3$ symmetric matrices, such that for any $S\in B_{{\rm sym},3}(\mathbb{I},\varepsilon_m)$ we have the following identity:
\begin{align}\label{eq3.3}
    S=\sum_{k\in\Lambda_m}\gamma^2_{(k)}(S)(k_1\otimes k_1).
\end{align}
Furthermore, we may choose $\Lambda_m$ such that $\Lambda_b\cap\Lambda_m=\varnothing$.
\end{lem}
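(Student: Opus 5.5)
This is the classical geometric decomposition lemma of Nash/De Lellis--Sz\'ekelyhidi type, in the version used in \cite{BBV-2020-annpde,LZZ-2022-jmpa}. I would prove it in three steps: a linear-algebra decomposition at the identity, an extension to a neighbourhood of $\mathbb{I}$ by the implicit function theorem, and a rotation argument that makes $\Lambda_m$ disjoint from $\Lambda_b$.

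\textbf{Step 1: spanning family at $\mathbb{I}$.} I would choose finitely many rational orthonormal frames $(k,k_1,k_2)$ whose rank-one matrices $k_1\otimes k_1$ span the $6$-dimensional space of symmetric matrices. I would also require that $\mathbb{I}$ is a combination of them with strictly positive coefficients. Rational frames are plentiful: rational points are dense on $\mathbb{S}^2$ (via stereographic projection), and every rational unit vector $k_1$ can be completed to a rational orthonormal frame, for instance by Cayley-type rational rotation matrices.

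A concrete candidate uses the six vectors $k_1\in\{e_1,e_2,e_3,\tfrac{1}{\sqrt2}(e_i\pm e_j)\}$. These are not rational, so I would replace them by the vectors $\tfrac{1}{5}(3e_i\pm 4e_j)$, $\tfrac15(4e_i\pm3e_j)$ together with $e_1,e_2,e_3$, and complete each to a rational frame. Writing
\[
\mathbb{I}=\sum_k c_k\, k_1\otimes k_1 ,\qquad c_k>0,
\]
the positivity can be arranged by averaging. Since $\tfrac12(v_+\otimes v_+ + v_-\otimes v_-)$ with $v_\pm=\tfrac15(3e_i\pm4e_j)$ is diagonal, suitable positive combinations of these pairs and of the $e_i\otimes e_i$ give $\mathbb{I}$. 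Spanning then holds because the off-diagonal directions are generated by the differences $v_+\otimes v_+-v_-\otimes v_-$.

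\textbf{Step 2: extension to a ball.} If the set has more than six elements, I would select six frames whose $k_1\otimes k_1$ form a basis, and keep the remaining coefficients fixed and positive. The map $\gamma\mapsto\sum\gamma_{(k)}^2\,k_1\otimes k_1$ then becomes a linear isomorphism in the variables $\gamma_{(k)}^2$ on the basis part. Solving linearly, each $\gamma^2_{(k)}(S)$ is an affine function of $S$ that is positive at $S=\mathbb{I}$. By continuity it stays positive on $B_{{\rm sym},3}(\mathbb{I},\varepsilon_m)$ for $\varepsilon_m$ small, so $\gamma_{(k)}=\sqrt{\cdot}$ is smooth and positive there. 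This yields \eqref{eq3.3}.

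\textbf{Step 3: disjointness.} The step that needs care is arranging $\Lambda_b\cap\Lambda_m=\varnothing$ while keeping everything rational. Here I would apply a rational rotation $O\in SO(3)\cap\mathbb{Q}^{3\times3}$ to all frames in $\Lambda_m$. The identity transforms as
\[
S=\sum\gamma^2_{(k)}(O^TSO)\,(Ok_1)\otimes(Ok_1),
\]
and $O^TSO$ lies in the same ball because $O^T\mathbb{I}O=\mathbb{I}$ and rotation preserves the norm. Since $\Lambda_b$ is finite, only finitely many constraints must be avoided. Rational rotations are dense in $SO(3)$ (Cayley parametrization), and the rotations sending some $k\in\Lambda_m$ into the finite set $\Lambda_b$ form a closed nowhere-dense set. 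So a generic rational $O$ works.

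For completeness, Lemma~\ref{lem-1st-geometric} would be handled analogously: the matrices $k_2\otimes k_1-k_1\otimes k_2$ span the skew-symmetric matrices, and one decomposes around $A=0$ by adding a constant positive multiple of a vanishing combination. This is the step from \cite{BBV-2020-annpde} that we take as given.
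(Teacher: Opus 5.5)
Your argument is correct. The paper gives no proof of its own here: it only recalls the lemma from \cite{BBV-2020-annpde,LZZ-2022-jmpa}. Your three steps are the standard Nash/De Lellis--Sz\'ekelyhidi argument behind those citations: a rational spanning family of rank-one matrices $k_1\otimes k_1$ representing $\mathbb{I}$ with positive weights, an affine solve for the $\gamma_{(k)}^2$ near $\mathbb{I}$ followed by square roots, and a rational rotation to separate $\Lambda_m$ from $\Lambda_b$.

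One detail needs adding. The lemma is indexed by $k$, and so is everything downstream ($\gamma_{(k)}$, $W_{(k)}$, $g_{(k)}$). So when you complete the $k_1$'s to rational frames, you must choose the resulting $k$'s pairwise distinct. A careless completion can collide: for instance $k=e_2$ is admissible for both $k_1=e_1$ and $k_1=e_3$. The rotation in Step 3 cannot undo such a collision, since it is injective.

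This is easy to arrange, and doing it carefully makes Step 3 unnecessary. Identity \eqref{eq3.3} involves only $k_1$, and the rational unit vectors in $k_1^\perp$ are dense in that great circle. Concretely, take a rational orthonormal basis $u,v$ of $k_1^\perp$ and set $k=\frac{1-t^2}{1+t^2}u+\frac{2t}{1+t^2}v$ with $t\in\mathbb{Q}$, and $k_2=k\times k_1$. This lets you pick each $k$ distinct from the others and outside the finite set $\Lambda_b$ directly, with no rotation needed.
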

As pointed out in \cite{BBV-2020-annpde}, there exists $N_{\Lambda}\in\mathbb{N}$ such that
\begin{align}\label{eq3.4}
    \{N_{\Lambda}k,N_{\Lambda}k_1,N_{\Lambda}k_2\}\subseteq N_{\Lambda}\mathbb{S}^2\cap\mathbb{Z}^3,\quad \forall k\in\Lambda_m\cup\Lambda_b.
\end{align}
Furthermore, we denote by $M_*$ the geometric constant such that
\begin{align}\label{eq3.5}
    \sum_{k\in\Lambda_m}\|\gamma_{(k)}\|_{C^4(B_{{\rm sym},3}(\mathbb{I},\varepsilon_m))}+\sum_{k\in\Lambda_b}\|\gamma_{(k)}\|_{C^4(B_{{\rm skew},3}(0,\varepsilon_b))}\leq M_*.
\end{align}
This parameter is universal and will be used later in the estimates of the size of perturbations.

Then, let $\Phi:\mathbb{R}\rightarrow\mathbb{R}$ be a smooth cut-off function supported on $[-1,1]$, and then normalize $\Phi$ such that $\phi:=-\frac{{\rm d}^2}{{\rm d}x^2}\Phi$ satisfies
\begin{align}\label{eq3.6}
    \frac{1}{2\pi}\int_{\mathbb{R}}\phi^2(x){\rm d}x=1.
\end{align}
Under circumstances that do not cause confusion, we periodize $\phi$ and $\Phi$ so that they are treated as periodic functions defined on $\mathbb{T}^3$.

Inspired by \cite{BV-2019-emssms,BBV-2020-annpde,LQZZ-2022-arxiv,LZZ-2022-jmpa}, we choose the Mikado flows as the basic spatial building blocks, defined by
\begin{align*}
        &W_{(k)}:=\phi(\lambda N_{\Lambda}k\cdot x)k_1,\quad k\in\Lambda_m\cup \Lambda_b,\\
        &D_{(k)}:=\phi(\lambda N_{\Lambda}k\cdot x)k_2,\quad k\in\Lambda_b,
\end{align*}
where $W_{(k)}$ and $D_{(k)}$ are the momentum and magnetic Mikado flows respectively. For ease of notation, we set
\begin{equation}\label{eq3.7}
    \begin{aligned}
        &\phi_{(k)}(x):=\phi(\lambda N_{\Lambda}k\cdot x),\\
        &\Phi_{(k)}(x):=\Phi(\lambda N_{\Lambda}k\cdot x),
    \end{aligned}
\end{equation}
and thus, the Mikado flows can be reformulated as 
\begin{equation}\label{eq3.8}
    \begin{aligned}
        &W_{(k)}=\phi_{(k)}k_1,\quad k\in\Lambda_m\cup \Lambda_b,\\
        &D_{(k)}=\phi_{(k)}k_2,\quad k\in\Lambda_b.
    \end{aligned}
\end{equation}
Next we present the $C^N_x$-estimates of Mikado flows in the following lemma, see \cite{BBV-2020-annpde} where $r=1$.
\begin{lem}\label{lem-mikado-flows}(Estimates of Mikado flows). For $N\geq0$ and $k\in\Lambda_m\cup\Lambda_b$, we have
\begin{align}\label{eq3.9}
\|\nabla^N\phi_{(k)}\|_{C_x}+\|\nabla^N\Phi_{(k)}\|_{C_x}\lesssim\lambda^N,
\end{align}
where the implicit constants are independent of $\lambda$ but may depend on $N$.
\end{lem}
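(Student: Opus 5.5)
The estimate \eqref{eq3.9} comes from the chain rule applied to a composition with a linear map, combined with the fact that periodization does not change the supremum norm. I will argue first for $\phi_{(k)}$; the argument for $\Phi_{(k)}$ is identical.

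Let $\zeta$ be a multi-index with $|\zeta|=N$. Since $\phi_{(k)}(x)=\phi(\lambda N_{\Lambda}k\cdot x)$ is a function of the scalar variable $y=\lambda N_\Lambda k\cdot x$, the chain rule gives
\begin{align*}
\nabla^\zeta\phi_{(k)}(x)=(\lambda N_\Lambda)^{N}k^{\zeta}\,\phi^{(N)}(\lambda N_\Lambda k\cdot x),\qquad k^\zeta:=k_1^{\zeta_1}k_2^{\zeta_2}k_3^{\zeta_3}.
\end{align*}
Here $k_1,k_2,k_3$ denote the components of $k$, not the basis vectors. Because $|k|=1$, we have $|k^\zeta|\le 1$. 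It follows that
\begin{align*}
\|\nabla^\zeta\phi_{(k)}\|_{C_x}\le (N_\Lambda\lambda)^N\|\phi^{(N)}\|_{L^\infty(\mathbb{R})}.
\end{align*}

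Next I check that this composition is well defined on $\mathbb{T}^3$ and that the sup norm is unaffected by periodization. By \eqref{eq3.4}, $N_\Lambda k\in\mathbb{Z}^3$. We take $\lambda=\lambda_{q+1}=a^{(b^q)}$ with $a\in\mathbb{N}$, so $\lambda N_\Lambda k\in\mathbb{Z}^3$ as well. Hence the map $x\mapsto \lambda N_\Lambda k\cdot x$ sends $2\pi\mathbb{Z}^3$ into $2\pi\mathbb{Z}$. Since $\phi$ is periodized with period $2\pi$ (its support in $[-1,1]$ fits inside one period), $\phi_{(k)}$ is a genuine smooth function on $\mathbb{T}^3$. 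The periodization of $\phi$ is a sum of translates with disjoint supports, so its derivatives have the same supremum as $\phi^{(N)}$ on $\mathbb{R}$.

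To conclude, note that $\phi=-\Phi''$ is smooth and compactly supported, so $\|\phi^{(N)}\|_{L^\infty}$ and $\|\Phi^{(N)}\|_{L^\infty}$ are finite constants depending only on $N$ and the fixed profile $\Phi$. The normalization \eqref{eq3.6} only fixes a multiplicative constant. Summing over the finitely many multi-indices with $|\zeta|=N$, and over the finite set $\Lambda_m\cup\Lambda_b$, yields \eqref{eq3.9} with an implicit constant depending on $N$, $N_\Lambda$ and $\Phi$, but not on $\lambda$.

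There is no real obstacle. The only point needing care is the integrality $\lambda N_\Lambda k\in\mathbb{Z}^3$, which makes the periodic composition consistent and is guaranteed by \eqref{eq3.4} together with $\lambda\in\mathbb{N}$.
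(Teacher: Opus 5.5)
Your chain-rule argument is correct and is exactly the computation behind the paper's treatment, which gives no proof of its own and simply cites \cite{BBV-2020-annpde} with $r=1$. One cosmetic slip: $\lambda=\lambda_{q+1}=a^{(b^{q+1})}$, not $a^{(b^q)}$; this is still an integer, so your integrality remark stands.
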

\subsection{Temporal building blocks}
Since the Mikado flows chosen as in \eqref{eq3.8} provide no intermittent effect, and thus cannot control the dissipation caused by $(-\Delta)^{\alpha_1}$ and $(-\Delta)^{\alpha_2}$ in \eqref{eq2.1}. We proceed to construct building blocks with the temporal intermittency. This idea was used in \cite{CL-2021-annpde,CL-2022-invent,CL-2023-annpde,LQZZ-2024-jmpa,LZZ-2024-jfa} which obtain sharp non-uniqueness results. However, unlike those works, because of spatial interactions of Mikado flows, inspired by \cite{LQZZ-2022-arxiv,DL-2025-JLMS}, suitable shifts should also be taken into account in the temporal building blocks, so that the supports of different temporal building blocks are disjoint.

More precisely, let $\left\{g_k\right\}_{k \in \Lambda_m\cup\Lambda_b}\subseteq C_c^{\infty}([0, T])$ be cut-off functions such that ${\rm supp}(g_k)\cap{\rm supp}(g_k^{\prime})=\emptyset$ if $k \neq k^{\prime}$, and
\begin{align*}
    \fint_0^Tg^2_k(t){\rm d}t=1
\end{align*}
for all $k \in \Lambda_m\cup\Lambda_b$. The existence of such a family $\left\{g_k\right\}_{k \in \Lambda}$ is guaranteed by the fact that there are finitely many wavevectors in $k \in \Lambda_m\cup\Lambda_b$, e.g., $g_k=g\left(t-\alpha_k\right)$, where $g \in C_c^{\infty}([0, T])$ with  small support and $\left\{\alpha_k\right\}_{k \in \Lambda_m\cup\Lambda_b}$ are the temporal shifts such that the supports of $\left\{g_k\right\}$ are disjoint.

Then, for each $k \in \Lambda_m\cup\Lambda_b$, we rescale the cut-off function $g_k$ by
\begin{align}\label{eq3.10}
    g_{k, \tau}(t)=\tau^{\frac{1}{2}} g_k(\tau t),
\end{align}
where the concentration parameter $\tau$ is given by \eqref{eq3.1}. Then, we periodize $g_{k,\tau}$ such that the resulting functions (by an abuse of notation, still denoted by $g_{k,\tau}$) are periodic functions defined on $[0,T]$.

In order to balance the high temporal oscillations arising from the concentration functions $g_{k,\tau}$, we need the functions $h_{k,\tau}:[0,T]\rightarrow\mathbb{R}$, defined by
\begin{align}\label{eq3.11}
    h_{k, \tau}(t):=\int_0^t\left(g_{k, \tau}^2(s)-1\right) d s, \quad t \in[0, T].
\end{align}

Set
\begin{align}\label{eq3.12}
    g_{(k)}(t):=g_{k, \tau}(\sigma t), \quad h_{(k)}(t):=h_{k, \tau}(\sigma t) .
\end{align}
Then, we obtain
\begin{align}\label{eq3.13}
    \partial_t\left(\sigma^{-1} h_{(k)}\right)=g_{(k)}^2-1=g_{(k)}^2-\fint_0^T g_{(k)}^2(s) \mathrm{d} s,
\end{align}
where $\sigma$ is given by \eqref{eq3.1}.

Now we introduce the crucial temporal intermittent estimates of $g_{(k)}$ and $h_{(k)}$ as in \cite{LQZZ-2022-arxiv} which are summarized in the following lemma.
\begin{lem}\label{lem-temporal-intermittency}(Estimates of temporal intermittency). For any $p\in[1,+\infty]$, $M\in\mathbb{N}$, we have
\begin{align}\label{eq3.14}  \|\partial_t^Mg_{(k)}\|_{L^p_t}\lesssim\sigma^M\tau^{M+\frac{1}{2}-\frac{1}{p}},
\end{align}
where the implicit constants are independent of $\sigma$ and $\tau$ but may depend on $M$. Moreover, we have the uniform bound
\begin{align}\label{eq3.15}
    \|h_{(k)}\|_{C_t}\leq1.
\end{align}
\end{lem}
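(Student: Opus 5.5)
The plan is to reduce everything to the unscaled profile $g_k$ by explicit changes of variables, using only that $g_k\in C_c^\infty$ is fixed (independent of $\sigma,\tau$) and that $g_{(k)}$ is periodic on $[0,T]$.

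\emph{Derivatives.} By \eqref{eq3.10} and \eqref{eq3.12}, $g_{(k)}(t)=\tau^{\frac12}g_k(\tau\sigma t)$, extended periodically. The chain rule gives
$$\partial_t^M g_{(k)}(t)=\sigma^M\tau^{M+\frac12}(\partial^M g_k)(\tau\sigma t).$$

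\emph{The $L^p$ bound \eqref{eq3.14}.} The $p=\infty$ case follows directly from the display, since $\|\partial^Mg_k\|_{L^\infty}$ is a fixed constant. For finite $p$, I will first compute the norm of the rescaled function $t\mapsto\tau^{\frac12}(\partial^M g_k)(\tau t)$ on one period. The substitution $s=\tau t$ gives $L^p$ norm $\tau^{M+\frac12-\frac1p}\|\partial^Mg_k\|_{L^p}$ up to the factor $T^{-1/p}$ from averaging (the paper's normalization with $\fint g_k^2=1$ suggests normalized measure, but constants are harmless either way). The time compression by $\sigma$ is then harmless. On $[0,T]$ the function $t\mapsto G(\sigma t)$ consists of $\sigma$ copies of $G$ over intervals of length $T/\sigma$, and I will assume, as is implicit in the construction, that $\sigma$ is an integer or the periodization is compatible. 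Hence $\|G(\sigma\cdot)\|_{L^p(0,T)}=\|G\|_{L^p(0,T)}$, and \eqref{eq3.14} follows.

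\emph{The uniform bound \eqref{eq3.15}.} The key fact is that $g_{k,\tau}^2-1$ has zero mean over each period. First, $\fint_0^T g_{k,\tau}^2=\fint_0^T g_k^2=1$: the support of $g_k(\tau\cdot)$ lies in $[0,T/\tau]$, and the periodization has period $T/\tau$, so each period carries integral $\tau\cdot\tau^{-1}\int g_k^2=\int g_k^2=T$. Consequently $h_{k,\tau}$ is $T/\tau$-periodic and vanishes at multiples of $T/\tau$. Within one period, for $t\in[0,T/\tau]$,
$$h_{k,\tau}(t)=\int_0^t\tau g_k^2(\tau s)\,ds-t=\int_0^{\tau t}g_k^2\,dr\,\cdot\frac{1}{1}\ -\ t .$$
Both $\int_0^{\tau t}g_k^2\,dr$ and $t$ lie in $[0,T/\tau]$, since $\int_0^{\tau t}g_k^2\le\int_0^Tg_k^2=T$ and we take the per-period normalization consistently. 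Therefore $|h_{k,\tau}|\le T/\tau\le 1$ once $\tau\ge T$, which holds for $\lambda$ large. Finally, $h_{(k)}(t)=h_{k,\tau}(\sigma t)$ has the same sup norm.

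The only delicate point is bookkeeping the normalization conventions: averaged versus unaveraged integrals, and compatibility of the periods $T/\tau$ and $T/\sigma$ with $[0,T]$. I would fix these by assuming $\tau,\sigma$ are such that $\tau\sigma\in\mathbb{N}$ (achievable since $\varepsilon b\in\mathbb{N}$ and $\lambda$ is a power of $a$), after which everything is a direct substitution.
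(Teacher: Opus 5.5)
Your argument for \eqref{eq3.14} is the standard scaling computation and is correct, provided $g_{k,\tau}$ is periodized with period $T$, i.e.\ one bump of width at most $T/\tau$ per period. Then $\|\partial_t^M g_{k,\tau}\|_{L^p(0,T)}=\tau^{M+\frac12-\frac1p}\|\partial^M g_k\|_{L^p}$. To pass to $g_{(k)}$ you only need $\sigma\in\mathbb{N}$, which holds since $\sigma=a^{15\varepsilon b^{q+1}}$ and $\varepsilon b\in\mathbb{N}$. The $T$-periodization is also the one for which $\fint_0^T g_{k,\tau}^2=\fint_0^T g_k^2=1$, as \eqref{eq3.13} requires. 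The paper itself gives no proof; it cites \cite{LQZZ-2022-arxiv}.

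Your proof of \eqref{eq3.15}, however, contains a genuine error.
\begin{itemize}
\item There you switch to period $T/\tau$. By your own computation each such period carries $\int g_{k,\tau}^2=T$, so the mean of $g_{k,\tau}^2$ would be $\tau$ rather than $1$.
\item Consequently $h_{k,\tau}$ would grow like $(\tau-1)t$ instead of vanishing at multiples of $T/\tau$. Moreover \eqref{eq3.14} would lose all intermittency: the $L^1_t$ norm would be of order $\tau^{1/2}$.
\item The step ``$\int_0^{\tau t}g_k^2\,{\rm d}r\in[0,T/\tau]$'' is false. If ${\rm supp}\, g_k\subseteq[0,d]$ with $d\le T$, then at $t_0=d/\tau$ one has $\int_0^{\tau t_0}g_k^2\,{\rm d}r=T$, so $h_{k,\tau}(t_0)=T-d/\tau\ge T-T/\tau$.
\end{itemize}
So the claimed decay $|h_{k,\tau}|\le T/\tau$ is wrong: $\|h_{(k)}\|_{C_t}$ is of size $T$ uniformly in $\tau$. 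No decay is needed anyway.

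The correct argument is shorter. With the $T$-periodization, $\int_0^T g_{k,\tau}^2=T$, so for $t\in[0,T]$ both $\int_0^t g_{k,\tau}^2$ and $t$ lie in $[0,T]$, hence $|h_{k,\tau}(t)|\le T$. Since $h_{k,\tau}(T)=0$, $h_{k,\tau}$ is $T$-periodic, and therefore $\|h_{(k)}\|_{C_t}=\sup|h_{k,\tau}|\le T$. This is the stated bound when $T\le 1$, and $\lesssim 1$ in general, which is all that is used later.

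Drop your proposed remedy $\tau\sigma\in\mathbb{N}$. It is not available, since $\tau\sigma=\lambda^{2\alpha+5\varepsilon}$ with $\alpha\in(0,1)$ arbitrary, and it is tied to the wrong period.
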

\begin{rem}
On the one hand, we need to control the $L^1_tC_x$-norm of Reynolds and magnetic error, for which the spatial intermittency would cause even larger error. This suggests that the suitable building blocks should be spatially homogeneous. On the other hand, in order to control the error caused by the dissipation, an extra intermittency (temporal intermittency) should be exploited from the building blocks.
\end{rem}
\subsection{Magnetic and momentum perturbations}
This section is devoted to the construction of the key magnetic and momentum perturbations, including the
principal parts, the almost incompressibility correctors and the temporal correctors. 

To begin with, let us first define suitable amplitudes of perturbations, which play the key role in the cancellation between the low-frequency part of the nonlinearity and the old Reynolds and magnetic stresses.
\subsubsection{Amplitudes}
Let $\chi:[0,+\infty)\rightarrow\mathbb{R}$ be a smooth spatial cut-off function satisfying
\begin{align}\label{eq3.16}
    \chi(z)=\left\{
    \begin{aligned}
     & 1,\quad0\leq z\leq1,\\  
     &z,\quad z\geq2,
    \end{aligned}
    \right.
\end{align}
and
\begin{align}\label{eq3.17}
    \frac{1}{2}z\leq\chi(z)\leq2z,\quad 1<z<2.
\end{align}

Set
\begin{align}\label{eq3.18}
\varrho_b(t)=2\varepsilon_b^{-1}\delta_{q+1}\chi\left(\frac{\|R^b_l(t,\cdot)\|_{C_x}}{\delta_{q+1}}\right),
\end{align}
where $\varepsilon_b>0$ is the small constant in the first geometric lemma, Lemma \ref{lem-1st-geometric}. Then, by \eqref{eq2.8}, \eqref{eq2.9}, \eqref{eq2.26}, \eqref{eq3.16}--\eqref{eq3.18} and standard H\"older estimates, for $1\leq N\leq4$, we have
\begin{align}
    &\left|\frac{R^b_l(t,x)}{\varrho_b(t)}\right|\leq\varepsilon_b,\quad\varrho_b(t)\geq\varepsilon_b^{-1}\delta_{q+1},\quad\forall t\in[0,T],x\in\mathbb{T}^3,\label{eq3.19}\\   &\|\varrho_b\|_{L^p_t}\leq4\varepsilon_b^{-1}\left(T^{\frac{1}{p}}\delta_{q+1}+\|R^b_l\|_{L^p_tC_x}\right),\quad\forall p\in[1,+\infty],\label{eq3.20}\\
    &\|\varrho_b\|_{C_t}\lesssim l^{-1},\quad\|\varrho_b\|_{C^N_t}\lesssim l^{-2N},\label{eq3.21}\\
    &\|\varrho^{\frac{1}{2}}_b\|_{C_t}\lesssim l^{-1},\quad\|\varrho^{\frac{1}{2}}_b\|_{C^N_t}\lesssim l^{-2N},\label{eq3.22}\\
    &\|\varrho^{-1}_b\|_{C_t}\lesssim \varepsilon_b\delta^{-1}_{q+1},\quad\|\varrho_b^{-1}\|_{C^N_t}\lesssim l^{-2N},\label{eq3.23}
\end{align}
where the implicit constants are independent of $q$.

Then, the smooth temporal cut-off function $f_b(t)$ is defined by
\begin{itemize}
    \item $0\leq f_b\leq1\text{ and } f_b\equiv1\text{ on }{{\rm supp}_t(R^b_l)}$;
     \item ${\rm supp}_t(f_b)\subseteq N_l({\rm supp}_t(R^b_l))$;
     \item $\|f_b\|_{C^N_t}\lesssim l^{-N},1\leq N\leq5$.
\end{itemize}

The amplitudes of the magnetic perturbations are defined by
\begin{align}\label{eq3.24}
    a_{(k)}(t,x):=f_b(t)\varrho^{\frac{1}{2}}_b(t)\rho^{\frac{1}{2}}_l(t,x)\gamma_{(k)}\left(\frac{-R^b_l(t,x)}{\varrho_b(t)}\right),\quad k\in\Lambda_b,
\end{align}
where $\gamma_{(k)}$ and $\Lambda_b$ are given in Lemma \ref{lem-1st-geometric}, and $\rho_l$ is the mollified density given by \eqref{eq2.16}.

Note that, by virtue of Lemma \ref{lem-1st-geometric}, the identity \eqref{eq3.2} and the expression \eqref{eq3.24} of $a_{(k)}$, the following identity holds:
\begin{equation}\label{eq3.25}
    \begin{aligned} &\quad\sum_{k\in\Lambda_b}\rho_l^{-1}a^2_{(k)}g^2_{(k)}\left(D_{(k)}\otimes W_{(k)}-W_{(k)}\otimes D_{(k)}\right)\\
  &=-R^b_l+\sum_{k\in\Lambda_b}\rho_l^{-1}a^2_{(k)}g^2_{(k)}\mathbb{P}_{\neq0}\left(D_{(k)}\otimes W_{(k)}-W_{(k)}\otimes D_{(k)}\right)\\
  &\quad+\sum_{k\in\Lambda_b}\rho_l^{-1}a^2_{(k)}\left(g^2_{(k)}-1\right)\fint_{\mathbb{T}^3}\left(D_{(k)}\otimes W_{(k)}-W_{(k)}\otimes D_{(k)}\right){\rm d}x,
    \end{aligned}
\end{equation}
where $\mathbb{P}_{\neq0}$ denotes the spatial projection onto nonzero Fourier modes.

Moreover, the analytic estimates of magnetic amplitudes are included in the following lemma, whose proof is similar to \cite[Lemma 4.1]{LZZ-2022-jmpa}, and so the proof is omitted here.
\begin{lem}\label{lem-magnetic-amplitude-estimate}(Estimates of magnetic amplitude). For any $1\leq N\leq4$, $k\in\Lambda_b$, we have
\begin{align}
    &\|a_{(k)}\|_{L^2_tC_x}\lesssim\delta_{q+1}^{\frac{1}{2}},\label{eq3.26}\\
    &\|a_{(k)}\|_{C_{t,x}}\lesssim l^{-2},\quad\|a_{(k)}\|_{C^N_{t,x}}\lesssim l^{-5N-1},\label{eq3.27}
\end{align}
where the implicit constants are independent of $q$.
\end{lem}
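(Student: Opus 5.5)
We estimate $a_{(k)}=f_b\,\varrho_b^{1/2}\rho_l^{1/2}\gamma_{(k)}(-R^b_l/\varrho_b)$ factor by factor, using only the bounds collected in \eqref{eq2.21}--\eqref{eq2.27}, \eqref{eq3.19}--\eqref{eq3.23}, the properties of $f_b$, and the geometric constant $M_*$ from \eqref{eq3.5}.

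\textbf{The $L^2_tC_x$ bound \eqref{eq3.26}.} We use $0\le f_b\le1$. By \eqref{eq2.21}, $\rho_l^{1/2}\le C_2^{1/2}$. By \eqref{eq3.19}, the argument $-R^b_l/\varrho_b$ lies in $B_{{\rm skew},3}(0,\varepsilon_b)$, so $|\gamma_{(k)}|\le M_*$. These give the pointwise bound $|a_{(k)}(t,x)|\lesssim \varrho_b^{1/2}(t)$. Hence
$$\|a_{(k)}\|_{L^2_tC_x}\lesssim\|\varrho_b\|_{L^1_t}^{1/2}.$$
By \eqref{eq3.20} with $p=1$ and \eqref{eq2.27}, $\|\varrho_b\|_{L^1_t}\lesssim T\delta_{q+1}+\|R^b_l\|_{L^1_tC_x}\lesssim\delta_{q+1}$. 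This yields \eqref{eq3.26}, with a constant depending only on $\varepsilon_b$, $M_*$, $C_2$ and $T$.

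\textbf{The sup bound in \eqref{eq3.27}.} The same pointwise bound together with \eqref{eq3.22} gives $\|a_{(k)}\|_{C_{t,x}}\lesssim\|\varrho_b^{1/2}\|_{C_t}\lesssim l^{-1}\le l^{-2}$.

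\textbf{The $C^N_{t,x}$ bounds, $1\le N\le4$.} We use the Leibniz rule on the product of four factors. The scalar factors satisfy:
\begin{itemize}
\item $\|f_b\|_{C^N_t}\lesssim l^{-N}$, by the construction of $f_b$;
\item $\|\varrho_b^{1/2}\|_{C^N_t}\lesssim l^{-2N}$, by \eqref{eq3.22};
\item $\|\rho_l^{1/2}\|_{C^N_{t,x}}\lesssim l^{-N}$. This follows from the chain rule, $\rho_l\ge C_1$, and the mollification bound $\|\rho_l\|_{C^N_{t,x}}\lesssim l^{-N+1}\|\rho_q\|_{C^1_{t,x}}$, which is controlled using \eqref{eq2.6}.
\end{itemize}
The composite $\gamma_{(k)}(-R^b_l/\varrho_b)$ is the main term. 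By the Faà di Bruno formula,
$$\|\gamma_{(k)}\circ(-R^b_l/\varrho_b)\|_{C^N_{t,x}}\lesssim \|\gamma_{(k)}\|_{C^N}\sum_{j=1}^N\|R^b_l/\varrho_b\|_{C^{N}_{t,x}}^{j}\cdot(\text{lower-order factors}).$$
For the inner function, the Leibniz rule with \eqref{eq2.26} and \eqref{eq3.23} gives
$$\|R^b_l\varrho_b^{-1}\|_{C^n_{t,x}}\lesssim\sum_{i\le n}\|R^b_l\|_{C^{n-i}_{t,x}}\|\varrho_b^{-1}\|_{C^i_t}\lesssim l^{-2n}\,\delta_{q+1}^{-1}.$$
The $i=0$ term uses $\|\varrho_b^{-1}\|_{C_t}\lesssim\delta_{q+1}^{-1}$. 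In the $j=0$ term of Leibniz we use $\|R^b_l\|_{C_{t,x}}\lesssim l^{-1}\|R^b_q\|_{C^1}$, or more simply the pointwise bound $|R^b_l/\varrho_b|\le\varepsilon_b$.

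Since $\delta_{q+1}^{-1}=\lambda_{q+1}^{2\beta}\lambda_1^{-3\beta}\le\lambda_q^{2\beta b}\ll l^{-1}=\lambda_q^{30}$, the factor $\delta_{q+1}^{-1}$ is absorbed into one power of $l^{-1}$. This gives $\|R^b_l/\varrho_b\|_{C^n_{t,x}}\lesssim l^{-2n-1}$.

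Combining all four factors, the worst term in the product is the $j=N$ Faà di Bruno contribution, of size $(l^{-3})^N$, multiplied by $\|\varrho_b^{1/2}\|_{C_t}\lesssim l^{-1}$. All mixed Leibniz distributions are bounded by $l^{-5N-1}$ since $l<1$. This gives \eqref{eq3.27}.

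\textbf{Main obstacle.} The delicate point is the composition estimate. Its exponent must not grow faster than linearly in $N$, and the negative powers of $\delta_{q+1}$ coming from $\varrho_b^{-1}$ must be absorbed. We will handle both by the crude absorption $\delta_{q+1}^{-1}\le l^{-1}$, which holds by \eqref{eq2.3s}, together with the generous budget $l^{-5N-1}$. We also rely on \eqref{eq3.19}, which keeps the argument inside the domain where \eqref{eq3.5} applies. With these, the bookkeeping is routine, following \cite[Lemma 4.1]{LZZ-2022-jmpa}.
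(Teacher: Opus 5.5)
Your argument is correct and follows the route the paper intends: the paper omits the proof and defers to \cite[Lemma 4.1]{LZZ-2022-jmpa}, whose argument is the pointwise bound $|a_{(k)}|\lesssim\varrho_b^{1/2}$ with $\|\varrho_b\|_{L^1_t}\lesssim\delta_{q+1}$ for \eqref{eq3.26}, and Leibniz plus Fa\`a di Bruno with the absorption $\delta_{q+1}^{-1}\le l^{-1}$ for \eqref{eq3.27}; your version of this in fact gives the stronger bound $l^{-3N-1}$. Only cosmetic fixes are needed: write the Fa\`a di Bruno bound with $\prod_i\|R^b_l/\varrho_b\|_{C^{k_i}_{t,x}}$, $\sum_i k_i=N$ (this, not $\|R^b_l/\varrho_b\|_{C^N_{t,x}}^j$, is what produces your $l^{-3N}$), and bound the term with all derivatives on $\varrho_b^{-1}$ by $\|R^b_q\|_{C_{t,x}}\,l^{-2n}\lesssim\lambda_q^{9}l^{-2n}\le l^{-2n-1}$, rather than by $l^{-2n}\delta_{q+1}^{-1}$ or with an extra factor $l^{-1}$.
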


Next we will focus on the momentum amplitudes. Unlike the previous magnetic case, because of the strong coupling between the momentum and magnetic fields, that is, sparked by \cite{BBV-2020-annpde,LZZ-2022-jmpa},  a matrix $G^b$ is indispensable in order to maintain the cancellation between the momentum perturbations and old Reynolds stresses, where
\begin{align}\label{eq3.28}  G^b:=\sum_{k\in\Lambda_b}a^2_{(k)}\left(\rho_l^{-1}\fint_{\mathbb{T}^3}W_{(k)}\otimes W_{(k)}{\rm d}x-\mu\fint_{\mathbb{T}^3}D_{(k)}\otimes D_{(k)}-\frac{1}{2}|D_{(k)}|^2\mathbb{I}{\rm d}x\right).
\end{align}
In view of estimates \eqref{eq2.21}, \eqref{eq2.22}, \eqref{eq3.26} and \eqref{eq3.27}, we have that for $N\geq1$,
\begin{align}\label{eq3.29}
    \|G^b\|_{C_{t,x}}\lesssim l^{-2},\quad\|G^b\|_{C^N_{t,x}}\lesssim l^{-5N-2},\quad\|G^b\|_{L^1_tC_x}\lesssim\delta_{q+1}.
\end{align}

Set
\begin{align}\label{eq3.30} \varrho_m(t):=2\varepsilon_m^{-1}\delta_{q+1}\chi\left(\frac{\|R^m_l(t,\cdot)+G^b(t,\cdot)\|_{C_x}}{\delta_{q+1}}\right),
\end{align}
where $\varepsilon_m>0$ is the small constant in the second geometric lemma, Lemma \ref{lem-2nd-geometric}. Then, by \eqref{eq2.8}, \eqref{eq2.9}, \eqref{eq2.26}, \eqref{eq3.16}--\eqref{eq3.18}, \eqref{eq3.29} and standard H\"older estimates, for $1\leq N\leq4$,
\begin{align}
    &\left|\frac{R^m_l(t,x)+G^b(t,x)}{\varrho_m(t)}\right|\leq\varepsilon_m,\quad\varrho_m(t)\geq\varepsilon_m^{-1}\delta_{q+1},\quad\forall t\in[0,T],x\in\mathbb{T}^3,\label{eq3.31}\\   &\|\varrho_m\|_{L^p_t}\leq4\varepsilon_m^{-1}\left(T^{\frac{1}{p}}\delta_{q+1}+\|R^m_l+G^b\|_{L^p_tC_x}\right),\quad\forall p\in[1,+\infty],\label{eq3.32}\\
    &\|\varrho_m\|_{C_t}\lesssim l^{-2},\quad\|\varrho_m\|_{C^N_t}\lesssim l^{-5N-3},\label{eq3.33}\\
    &\|\varrho^{\frac{1}{2}}_m\|_{C_t}\lesssim l^{-1},\quad\|\varrho^{\frac{1}{2}}_m\|_{C^N_t}\lesssim l^{-5N-3},\label{eq3.34}\\
    &\|\varrho^{-1}_m\|_{C_t}\lesssim \varepsilon_m\delta^{-1}_{q+1},\quad\|\varrho_m^{-1}\|_{C^N_t}\lesssim l^{-5N-3},\label{eq3.35}
\end{align}
where the implicit constants are independent of $q$.

We choose the smooth temporal cut-off function $f_m(t)$ such that
\begin{itemize}
    \item $0\leq f_m\leq1\text{ and } f_m\equiv1\text{ on }{{\rm supp}_t(R^m_l)\cup{\rm supp}_t(G^b)}$;
     \item ${\rm supp}_t(f_m)\subseteq N_l({\rm supp}_t(R^m_l)\cup{\rm supp}_t(G^b))\subseteq N_{2l}({\rm supp}_t(R^m_l)\cup{\rm supp}_t(R^b_l))$;
     \item $\|f_m\|_{C^N_t}\lesssim l^{-N},1\leq N\leq5$.
\end{itemize}

The amplitudes of the momentum perturbations are defined by
\begin{align}\label{eq3.36}
    a_{(k)}(t,x):=f_m(t)\varrho^{\frac{1}{2}}_m(t)\rho^{\frac{1}{2}}_l(t,x)\gamma_{(k)}\left(\mathbb{I}-\frac{R^m_l(t,x)+G^b(t,x)}{\varrho_m(t)}\right),\quad k\in\Lambda_m,
\end{align}
where $\gamma_{(k)}$ and $\Lambda_m$ are given in Lemma \ref{lem-2nd-geometric}, and $\rho_l$ is the mollified density given by \eqref{eq2.16}.

Similar to \eqref{eq3.25}, we can obtain the following identity by Lemma \ref{lem-2nd-geometric} and \eqref{eq3.36},
\begin{equation}\label{eq3.37}
    \begin{aligned}
        \sum_{k\in\Lambda_m}\rho_l^{-1}a^2_{(k)}g^2_{(k)}W_{(k)}\otimes W_{(k)}=&\varrho_m(t)f_m^2(t)\mathbb{I}-R^m_l-G^b+\sum_{k\in\Lambda_m}\rho_l^{-1}a^2_{(k)}g^2_{(k)}\mathbb{P}_{\neq0}\left(W_{(k)}\otimes W_{(k)}\right)\\
        &+\sum_{k\in\Lambda_m}\rho_l^{-1}a^2_{(k)}\left(g^2_{(k)}-1\right)\fint_{\mathbb{T}^3}W_{(k)}\otimes W_{(k)}{\rm d}x.
    \end{aligned}
\end{equation}

Next, the analytic estimates of momentum amplitudes are collected in Lemma \ref{lem-momentum-amplitude-estimate}, whose proof is similar to \cite[Lemma 4.2]{LZZ-2022-jmpa}.
\begin{lem}\label{lem-momentum-amplitude-estimate}(Estimates of momentum amplitudes). 
    For any $1\leq N\leq4$, $k\in\Lambda_m$, we have
\begin{align}
    &\|a_{(k)}\|_{L^2_tC_x}\lesssim\delta_{q+1}^{\frac{1}{2}},\label{eq3.38}\\
    &\|a_{(k)}\|_{C_{t,x}}\lesssim l^{-2},\quad\|a_{(k)}\|_{C^N_{t,x}}\lesssim l^{-10N-5},\label{eq3.39}
\end{align}
where the implicit constants are independent of $q$.
\end{lem}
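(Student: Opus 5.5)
The plan is to follow the magnetic amplitude case (Lemma \ref{lem-magnetic-amplitude-estimate}). We view $a_{(k)}$ in \eqref{eq3.36} as a product of four factors: $f_m(t)$, $\varrho_m^{1/2}(t)$, $\rho_l^{1/2}(t,x)$ and $\gamma_{(k)}(\mathbb{I}-(R^m_l+G^b)/\varrho_m)$. Each factor is estimated separately, and the bounds are combined with the Leibniz rule and the Fa\`a di Bruno (chain rule) formula. The argument of $\gamma_{(k)}$ lies in $B_{{\rm sym},3}(\mathbb{I},\varepsilon_m)$ by \eqref{eq3.31}. Hence $\|\gamma_{(k)}\|_{C^4}\le M_*$ by \eqref{eq3.5}, and in particular $|\gamma_{(k)}(\cdot)|\le M_*$ pointwise.

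\emph{Step 1: the bound \eqref{eq3.38}.} By \eqref{eq2.21}, $\rho_l^{1/2}\le C_2^{1/2}$. Since $0\le f_m\le1$, we get pointwise
$|a_{(k)}(t,x)|\lesssim \varrho_m^{1/2}(t)$. Therefore
$\|a_{(k)}\|_{L^2_tC_x}\lesssim \|\varrho_m\|_{L^1_t}^{1/2}$. By \eqref{eq3.32} with $p=1$,
$\|\varrho_m\|_{L^1_t}\lesssim \delta_{q+1}+\|R^m_l\|_{L^1_tC_x}+\|G^b\|_{L^1_tC_x}$. Both stress terms are $\lesssim\delta_{q+1}$, by \eqref{eq2.27} and \eqref{eq3.29}. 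This gives \eqref{eq3.38}.

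\emph{Step 2: the sup bound in \eqref{eq3.39}.} The same pointwise bound together with \eqref{eq3.34} gives $\|a_{(k)}\|_{C_{t,x}}\lesssim\|\varrho_m^{1/2}\|_{C_t}\lesssim l^{-1}\le l^{-2}$.

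\emph{Step 3: derivative bounds in \eqref{eq3.39}.} First bound the inner argument $A:=\mathbb{I}-(R^m_l+G^b)\varrho_m^{-1}$. Use \eqref{eq2.26}, \eqref{eq3.29} and \eqref{eq3.35}, and note that $l^{-1}\gtrsim\delta_{q+1}^{-1}$ since $l=\lambda_q^{-30}$. This gives $\|R^m_l+G^b\|_{C^j_{t,x}}\lesssim l^{-5j-2}$ and $\|\varrho_m^{-1}\|_{C^j_t}\lesssim l^{-5j-3}$, with the $j=0$ case of the latter bounded by $\delta_{q+1}^{-1}\lesssim l^{-1}$. By Leibniz, $\|A\|_{C^N_{t,x}}\lesssim l^{-5N-5}$, roughly.

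Fa\`a di Bruno then gives
$\|\gamma_{(k)}(A)\|_{C^N_{t,x}}\lesssim \sum_{j=1}^N\sum_{N_1+\dots+N_j=N}\prod_i\|A\|_{C^{N_i}}$. For $N\le4$ this is at worst $\lesssim l^{-5N-5j}\le l^{-10N}$, roughly.

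The other factors satisfy $\|f_m\|_{C^N}\lesssim l^{-N}$, $\|\varrho_m^{1/2}\|_{C^N}\lesssim l^{-5N-3}$ by \eqref{eq3.34}, and $\|\rho_l^{1/2}\|_{C^N_{t,x}}\lesssim l^{-N}$. The last one follows from \eqref{eq2.21}--\eqref{eq2.22} and mollification: time derivatives of $\rho_l$ beyond first order cost $l^{-1}$ each, and the lower bound $\rho_l\ge C_1$ keeps the square root smooth.

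Distributing $N$ derivatives over the four factors by Leibniz and taking the worst allocation yields $\|a_{(k)}\|_{C^N_{t,x}}\lesssim l^{-10N-5}$. The worst allocation puts all derivatives on $\gamma_{(k)}(A)$, multiplied by $\|\varrho_m^{1/2}\|_{C_t}\lesssim l^{-1}$; or it puts the derivatives on $\varrho_m^{1/2}$, where the extra $l^{-3}$ is absorbed into the $-5$.

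\emph{Main obstacle.} The only delicate point is the bookkeeping of powers of $l$ in the chain-rule step, so that every product of $A$-derivatives stays below $l^{-10N-5}$. This uses the crude replacement $\delta_{q+1}^{-1}\lesssim l^{-1}$ and the losses recorded in \eqref{eq3.29} and \eqref{eq3.35}. Since the target exponent is generous, the slack should suffice. If it does not, I would check the case $j=N$ of Fa\`a di Bruno separately, since it carries the largest number of factors.
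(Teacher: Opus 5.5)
Your proof is correct and is essentially the standard argument the paper defers to (Li--Zeng--Zhang, Lemma 4.2). For \eqref{eq3.38} you use the pointwise bound $|a_{(k)}|\lesssim\varrho_m^{1/2}$ together with \eqref{eq3.32}, \eqref{eq2.27} and \eqref{eq3.29}. For \eqref{eq3.39} you use Leibniz plus Fa\`a di Bruno, fed by \eqref{eq2.26}, \eqref{eq3.29}, \eqref{eq3.34}, \eqref{eq3.35} and $\delta_{q+1}^{-1}\lesssim l^{-1}$. Your bookkeeping is sound: the worst allocation actually gives $l^{-10N-1}$, so the stated $l^{-10N-5}$ holds with room to spare.
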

\subsubsection{Principal parts of perturbations}
The principal parts of the momentum and magnetic perturbations are defined by
\begin{align}    &w_{q+1}^{(p)}:=\sum_{k\in\Lambda_m\cup\Lambda_b}a_{(k)}g_{(k)}W_{(k)},\label{eq3.40}\\    &d_{q+1}^{(p)}:=\sum_{k\in\Lambda_b}a_{(k)}g_{(k)}D_{(k)}.\label{eq3.41}
\end{align}
Since the temporal supports of $g_{(k)}$ are pairwise disjoint, all cross-interaction terms vanish. By \eqref{eq3.13} and the algebraic identity \eqref{eq3.25},
\begin{equation}\label{eq3.42}
    \begin{aligned}
        &\quad\rho_l^{-1}\left(d_{q+1}^{(p)}\otimes w_{q+1}^{(p)}-w_{q+1}^{(p)}\otimes d_{q+1}^{(p)}\right)+R^b_l\\       &=\sum_{k\in\Lambda_b}\rho_l^{-1}a^2_{(k)}g^2_{(k)}\left(D_{(k)}\otimes W_{(k)}-W_{(k)}\otimes D_{(k)}\right)+R^b_l\\
        &=\sum_{k\in\Lambda_b}\rho_l^{-1}a^2_{(k)}g^2_{(k)}\mathbb{P}_{\neq0}\left(D_{(k)}\otimes W_{(k)}-W_{(k)}\otimes D_{(k)}\right)\\
  &\quad+\sum_{k\in\Lambda_b}\rho_l^{-1}a^2_{(k)}\partial_t\left(\sigma^{-1}h_{(k)}\right)\fint_{\mathbb{T}^3}\left(D_{(k)}\otimes W_{(k)}-W_{(k)}\otimes D_{(k)}\right){\rm d}x
    \end{aligned}
\end{equation}
Moreover, for the nonlinearity in the momentum equations $\eqref{eq2.1}_2$, by \eqref{eq3.13}, \eqref{eq3.28} and \eqref{eq3.37},
\begin{equation}\label{eq3.43}
    \begin{aligned}
        &\quad\rho_l^{-1}w_{q+1}^{(p)}\otimes w_{q+1}^{(p)}-\mu\left(d_{q+1}^{(p)}\otimes d_{q+1}^{(p)}-\frac{1}{2}\left|d_{q+1}^{(p)}\right|^2\mathbb{I}\right)+R^m_l\\
        &=\sum_{k\in\Lambda_m}\rho_l^{-1}a^2_{(k)}g^2_{(k)}W_{(k)}\otimes W_{(k)}+R^m_l\\
   &\quad+\sum_{k\in\Lambda_b}a^2_{(k)}g^2_{(k)}\left(\rho_l^{-1}W_{(k)}\otimes W_{(k)}-\mu\left( D_{(k)}\otimes D_{(k)}-\frac{1}{2}|D_{(k)}|^2\mathbb{I}\right)\right)\\
   &=\varrho_m(t)f_m^2(t)\mathbb{I}+\sum_{k\in\Lambda_m}\rho_l^{-1}a^2_{(k)}g^2_{(k)}\mathbb{P}_{\neq0}\left(W_{(k)}\otimes W_{(k)}\right)\\
        &\quad+\sum_{k\in\Lambda_m}\rho_l^{-1}a^2_{(k)}\partial_t\left(\sigma^{-1}h_{(k)}\right)\fint_{\mathbb{T}^3}W_{(k)}\otimes W_{(k)}{\rm d}x\\
        &\quad+\sum_{k\in\Lambda_b}a^2_{(k)}g^2_{(k)}\left(\rho_l^{-1}\mathbb{P}_{\neq0}\left(W_{(k)}\otimes W_{(k)}\right)-\mu\mathbb{P}_{\neq0}\left( D_{(k)}\otimes D_{(k)}-\frac{1}{2}|D_{(k)}|^2\mathbb{I}\right)\right)\\
        &\quad+\sum_{k\in\Lambda_b}a^2_{(k)}\partial_t\left(\sigma^{-1}h_{(k)}\right)\left(\rho_l^{-1}\fint_{\mathbb{T}^3}W_{(k)}\otimes W_{(k)}{\rm d}x-\mu\fint_{\mathbb{T}^3}D_{(k)}\otimes D_{(k)}-\frac{1}{2}|D_{(k)}|^2\mathbb{I}{\rm d}x\right).
    \end{aligned}
\end{equation}
\begin{rem}
 Note that, in view of \eqref{eq3.31}, $|\varrho_m|$ may not be less than $|R^m_l|$, up to a constant, the term $\varrho_m(t)f_m^2(t)\mathbb{I}$ is a large error and would ruin the inductive estimates \eqref{eq2.9} for the Reynolds stress at level $q+1$. In the incompressible case, this will not cause any difficulty due to its divergence can be absorbed into the pressure term $\nabla P$. However, this strategy fails in the compressible case, as the pressure depends on the density which is constrained by the transport equation $\eqref{eq2.1}_1$. Inspired by \cite{LQZZ-2022-arxiv,DL-2025-JLMS}, we use the time-dependence of both $\varrho_m$ and $\varrho_b$ to eliminate this trouble term by taking the spatial divergence operator.
\end{rem}
\subsubsection{Incompressible correctors}
Since the amplitude functions $\left\{a_{(k)}\right\}_{k\in\Lambda_m\cup\Lambda_b}$ depend on spatial variables, the principal parts of the perturbations are not divergence-free in general. This leads to the definition of the incompressible correctors
\begin{align}
    &w_{q+1}^{(c)}:=\sum_{k\in\Lambda_m\cup\Lambda_b}g_{(k)}\left({\rm curl} \left(\nabla a_{(k)}\times W^c_{(k)}\right)+\nabla a_{(k)}\times{\rm curl}W^c_{(k)}\right),\label{eq3.44}\\
    &d_{q+1}^{(c)}:=\sum_{k\in\Lambda_b}g_{(k)}\left({\rm curl} \left(\nabla a_{(k)}\times D^c_{(k)}\right)+\nabla a_{(k)}\times{\rm curl}D^c_{(k)}\right),\label{eq3.45}
\end{align}
where $W^c_{(k)}$ and $D^c_{(k)}$ are defined by
\begin{align}\label{eq3.46}
    W^c_{(k)}:=\frac{1}{N_{\Lambda}^2\lambda^2}\Phi_{(k)}k_1,\quad D^c_{(k)}:=\frac{1}{N_{\Lambda}^2\lambda^2}\Phi_{(k)}k_2.
\end{align}
Then, it follows that,
\begin{align}
    &w_{q+1}^{(p)}+w_{q+1}^{(c)}={\rm curl}{\rm curl}\left(\sum_{k\in\Lambda_m\cup\Lambda_b}a_{(k)}g_{(k)}W^c_{(k)}\right),\label{eq3.47}\\
    &d_{q+1}^{(p)}+d_{q+1}^{(c)}={\rm curl}{\rm curl}\left(\sum_{k\in\Lambda_b}a_{(k)}g_{(k)}D^c_{(k)}\right).\label{eq3.48}
\end{align}
In particular,
\begin{align}\label{eq3.49}
    {\rm div}\left(w_{q+1}^{(p)}+w_{q+1}^{(c)}\right)={\rm div}\left(d_{q+1}^{(p)}+d_{q+1}^{(c)}\right)=0.
\end{align}
\begin{rem}
    We have noticed that this type of incompressible corrector is frequently employed in convex integration schemes for incompressible fluid, see for instance \cite{BV-2019-emssms,BBV-2020-annpde}. But for compressible MHD equations, although the magnetic field requires the incompressible correctors, it seems not necessary to keep the momentum perturbations incompressible. However, inspired by \cite{LQZZ-2022-arxiv}, we still use this corrector to keep part of momentum perturbations incompressible, which reduces the error caused by density in subsequent convex integration scheme.
\end{rem}
\subsubsection{Temporal correctors}
The last type of corrector consists of the temporal correctors, in order to balance the high temporal oscillations in \eqref{eq3.42} and \eqref{eq3.43} caused by the temporal concentration functions $g_{(k)}$. Drawing inspiration from \cite{LZZ-2022-jmpa,LQZZ-2022-arxiv,NY-2025-jns}, the temporal correctors $w_{(q+1)}^{(o)}$ and $d_{(q+1)}^{(o)}$ are defined by
\begin{align}
   & \begin{aligned}
        w_{(q+1)}^{(o)}:=&-\sigma^{-1}\sum_{k\in\Lambda_m\cup\Lambda_b}h_{(k)}\fint_{\mathbb{T}^3}W_{(k)}\otimes W_{(k)}{\rm d}x\nabla \left(\rho_l^{-1}a^2_{(k)}\right)\\
        &+\sigma^{-1}\sum_{k\in\Lambda_b}h_{(k)}\mu\fint_{\mathbb{T}^3}D_{(k)}\otimes D_{(k)}-\frac{1}{2}\left|D_{(k)}\right|^2\mathbb{I}{\rm d}x\nabla \left(a^2_{(k)}\right),
    \end{aligned}\label{eq3.50}\\
    &d_{(q+1)}^{(o)}:=-\sigma^{-1}\sum_{k\in\Lambda_b}h_{(k)}{\rm div}\left(\rho^{-1}_la^2_{(k)}\fint_{\mathbb{T}^3}D_{(k)}\otimes W_{(k)}-W_{(k)}\otimes D_{(k)}{\rm d}x\right),\label{eq3.51}
\end{align}
where $h_{(k)}$ is given by \eqref{eq3.12}.
\begin{rem}
    Since $\mathbb{P}_{=0}\left(D_{(k)}\otimes W_{(k)}-W_{(k)}\otimes D_{(k)}\right)=k_2\otimes k_1-k_1\otimes k_2$ is skew-symmetric, $d_{(q+1)}^{(o)}$ is divergence-free as well.
\end{rem}

Then, by the Leibniz rule,
\begin{equation}\label{eq3.52}
    \begin{aligned}     &\quad\partial_tw_{q+1}^{(o)}+\sum_{k\in\Lambda_m}\partial_t\left(\sigma^{-1}h_{(k)}\right)\fint_{\mathbb{T}^3}W_{(k)}\otimes W_{(k)}{\rm d}x\nabla\left(\rho_l^{-1}a^2_{(k)}\right)\\        &\quad+\sum_{k\in\Lambda_b}\partial_t\left(\sigma^{-1}h_{(k)}\right)\fint_{\mathbb{T}^3}W_{(k)}\otimes W_{(k)}{\rm d}x\nabla\left(\rho_l^{-1}a^2_{(k)}\right)\\
    &\quad-\mu\sum_{k\in\Lambda_b}\partial_t\left(\sigma^{-1}h_{(k)}\right)\fint_{\mathbb{T}^3}D_{(k)}\otimes D_{(k)}-\frac{1}{2}|D_{(k)}|^2\mathbb{I}{\rm d}x\nabla\left(a^2_{(k)}\right)\\
    &=-\sigma^{-1}\sum_{k\in\Lambda_m\cup\Lambda_b}h_{(k)}\left(k_1\cdot\nabla\right)\partial_t\left(\rho_l^{-1}a^2_{(k)}\right)k_1\\
    &\quad+\sigma^{-1}\sum_{k\in\Lambda_b}\mu h_{(k)}\left(\left(k_2\cdot\nabla\right)\partial_t\left(a^2_{(k)}\right)k_2-\frac{1}{2}\partial_t\nabla\left(a^2_{(k)}\right)\right),
    \end{aligned}
\end{equation}
and
\begin{equation}\label{eq3.53}
    \begin{aligned}
&\quad\partial_td_{q+1}^{(o)}+\sum_{k\in\Lambda_b}\partial_t\left(\sigma^{-1}h_{(k)}\right)\fint_{\mathbb{T}^3}\left(D_{(k)}\otimes W_{(k)}-W_{(k)}\otimes D_{(k)}\right){\rm d}x\nabla\left(\rho_l^{-1}a^2_{(k)}\right)\\
&=-\sigma^{-1}\sum_{k\in\Lambda_b}h_{(k)}\left(k_2\otimes k_1-k_1\otimes k_2\right)\partial_t\nabla\left(\rho_l^{-1}a^2_{(k)}\right).
    \end{aligned}
\end{equation}

Now, we define the magnetic and momentum perturbations at level $q+1$ by
\begin{align}
    &w_{q+1}:=w_{q+1}^{(p)}+w_{q+1}^{(c)}+w_{q+1}^{(o)},\label{eq3.54}\\
    &d_{q+1}:=d_{q+1}^{(p)}+d_{q+1}^{(c)}+d_{q+1}^{(o)}.\label{eq3.55}
\end{align}
Note that, by the constructions above, $w_{q+1}$ is mean-free but not divergence-free, and, $d_{q+1}$ is both mean-free and divergence-free.

The new momentum and magnetic fields $m_{q+1}$ and $b_{q+1}$ at $q+1$ level is then defined by
\begin{align}\label{eq3.56}
    m_{q+1}:=m_l+w_{q+1},\quad b_{q+1}:=b_l+d_{q+1},
\end{align}
where $m_l$ and $b_l$ are the previous mollified momentum and magnetic fields defined in \eqref{eq2.16}.
\subsection{Density perturbation}
Because the momentum perturbation $w_{q+1}$ may not be divergence-free, we need to construct density perturbation under the constraints of the transport equation $\eqref{eq2.1}_1$ at level $q+1$.

More precisely, we defin the density perturbation $z_{q+1}$ as
\begin{equation}\label{eq3.57}
    \begin{aligned}
        z_{q+1}(t,x)&:=-\int_0^t{\rm div}w_{q+1}(s,x){\rm d}s\\
        &=\sigma^{-1}\sum_{k\in\Lambda_m\cup\Lambda_b}\int_0^th_{(k)}{\rm div}\left(k_1\otimes k_1\nabla \left(\rho_l^{-1}a^2_{(k)}\right)\right)(s,x){\rm d}s\\
        &\quad-\sigma^{-1}\sum_{k\in\Lambda_b}\mu\int_0^th_{(k)}{\rm div}\left(\left(k_2\otimes k_2-\frac{1}{2}|k_2|^2\mathbb{I}\right)\nabla \left(a^2_{(k)}\right)\right)(s,x){\rm d}s.
    \end{aligned}
\end{equation}
Then, the new density at level $q+1$ is defined by
\begin{align}\label{eq3.58}
    \rho_{q+1}:=\rho_l+z_{q+1}.
\end{align}
Furthermore, we have 
\begin{align}\label{eq3.59}
    \partial_t\rho_{q+1}+{\rm div}m_{q+1}=\partial_tz_{q+1}+{\rm div}w_{q+1}=0,
\end{align}
which coincides with the transport equation $\eqref{eq2.1}_1$ at level $q+1$.
\subsection{Estimates of perturbations}
In this section, we summarize the crucial estimates of the perturbations in Proposition \ref{prop-estimate-perturbation} below.
\begin{pro}\label{prop-estimate-perturbation}
    For any $p\in[1,+\infty]$ and integer $1\leq N\leq4$, the following estimates will hold,
    \begin{align}
        &\|\nabla^Nw_{q+1}^{(p)}\|_{L^p_tC_x}+\|\nabla^Nd_{q+1}^{(p)}\|_{L^p_tC_x}\lesssim l^{-5}\lambda^N\tau^{\frac{1}{2}-\frac{1}{p}},\label{eq3.60}\\
        &\|\nabla^Nw_{q+1}^{(c)}\|_{L^p_tC_x}+\|\nabla^Nd_{q+1}^{(c)}\|_{L^p_tC_x}\lesssim l^{-15}\lambda^{N-1}\tau^{\frac{1}{2}-\frac{1}{p}},\label{eq3.61}\\
        &\|\nabla^Nw_{q+1}^{(o)}\|_{L^p_tC_x}+\|\nabla^Nd_{q+1}^{(o)}\|_{L^p_tC_x}\lesssim l^{-10N-18}\sigma^{-1},\label{eq3.62}\\
        &\|w_{q+1}\|_{C^N_{t,x}}+\|d_{q+1}\|_{C^N_{t,x}}\lesssim\lambda^{2N+2}.\label{eq3.63}
    \end{align}
    Moreover, for integer $0\leq N\leq4$ and $0\leq M\leq1$, it holds that,
    \begin{align}
        \|\partial_t^Mz_{q+1}\|_{C_tC^N_x}\lesssim l^{-10N-28}\sigma^{-1},\label{eq3.64}
    \end{align}
    where the implicit constants are independent of $q$.
\end{pro}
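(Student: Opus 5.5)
The plan is to estimate each piece of the perturbation directly from its explicit formula. Throughout, the temporal and spatial factors separate: every term is a product of an amplitude (controlled by Lemmas \ref{lem-magnetic-amplitude-estimate} and \ref{lem-momentum-amplitude-estimate}), a temporal profile $g_{(k)}$ or $h_{(k)}$ (controlled by Lemma \ref{lem-temporal-intermittency}), and a Mikado profile (controlled by Lemma \ref{lem-mikado-flows}). We use $l=\lambda_q^{-30}$ and choose $a$ large, so any fixed power of $l^{-1}$ is dominated by an arbitrarily small power of $\lambda=\lambda_{q+1}=\lambda_q^{b}$; this is available because $b>1000/\varepsilon$.

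\textbf{Principal parts, \eqref{eq3.60}.} Write $w^{(p)}_{q+1}=\sum_k a_{(k)}g_{(k)}\phi_{(k)}k_1$. By the Leibniz rule,
\[
\|\nabla^N(a_{(k)}\phi_{(k)})\|_{C_{t,x}}\lesssim \sum_{j\le N}\|a_{(k)}\|_{C^{j}_{t,x}}\lambda^{N-j}.
\]
Since $l^{-1}\ll\lambda$, the leading contribution is $\|a_{(k)}\|_{C_{t,x}}\lambda^N\lesssim l^{-2}\lambda^N$. Every lower-order term carries $l^{-10j-5}\lambda^{N-j}\le l^{-5}\lambda^N$, because $l^{-10}\le\lambda$. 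Multiplying by $\|g_{(k)}\|_{L^p_t}\lesssim\tau^{1/2-1/p}$ from \eqref{eq3.14} with $M=0$ gives \eqref{eq3.60}. The same argument applies to $d^{(p)}_{q+1}$.

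\textbf{Incompressible correctors, \eqref{eq3.61}.} The profiles $W^c_{(k)}$ and $D^c_{(k)}$ carry a factor $\lambda^{-2}$. In \eqref{eq3.44}, one curl lands on $\Phi_{(k)}$, which gains $\lambda$, and at least one derivative always falls on $a_{(k)}$, costing at most $l^{-15}$ once the $C^N$ bounds are combined. Hence the net spatial size is $l^{-15}\lambda^{N-1}$, and multiplying by the $L^p_t$ norm of $g_{(k)}$ gives \eqref{eq3.61}.

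\textbf{Temporal correctors and density perturbation, \eqref{eq3.62} and \eqref{eq3.64}.} The temporal correctors contain no fast spatial oscillation: the Mikado averages are constant matrices, and $\|h_{(k)}\|_{C_t}\le1$ by \eqref{eq3.15}. So it suffices to bound $\nabla^{N+1}(\rho_l^{-1}a^2_{(k)})$ using \eqref{eq2.21}, \eqref{eq2.22} and \eqref{eq3.39}. This gives roughly $l^{-10(N+1)-5}\cdot l^{-2}\lesssim l^{-10N-18}$, hence \eqref{eq3.62}.

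For $z_{q+1}$, we use the explicit form \eqref{eq3.57}, which involves two spatial derivatives of $\rho_l^{-1}a^2_{(k)}$ and $a^2_{(k)}$ inside a time integral over $[0,T]$. The $C_tC^N_x$ bound is then $\sigma^{-1}l^{-10(N+2)-8}$, giving the exponent $l^{-10N-28}$. The case $M=1$ removes the time integral and is even easier.

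\textbf{Full bound, \eqref{eq3.63}, and the main obstacle.} The crude $C^N_{t,x}$ bound is where most of the work lies, because time derivatives of $g_{(k)}(t)=\tau^{1/2}g_k(\sigma\tau t)$ cost $\sigma\tau$ each, and the $C_t$ norm itself costs $\tau^{1/2}$. Here $\sigma\tau=\lambda^{2\alpha+5\varepsilon}\le\lambda^{2}$ since $\alpha<1$, and $\tau^{1/2}\le\lambda$. Thus each time derivative costs at most $\lambda^2$, each space derivative costs $\lambda$, and the amplitudes and $l^{-1}$-factors cost at most $\lambda^{\varepsilon}$.

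Counting these gives $\|w_{q+1}\|_{C^N_{t,x}}\lesssim \lambda^{1+2N+\varepsilon}\,l^{-C}$. This is below $\lambda^{2N+2}$ once $a$ is large, using $\lambda_q^{30C}\le\lambda^{1/2}$. The temporal correctors and $d_{q+1}$ are handled identically, since the $\sigma^{-1}$ prefactor only helps. The delicate point is bookkeeping the exponents of $l$ so that they match the stated powers; we expect this rather than any conceptual step to be the main obstacle, and it is routine given $l^{-1}\le\lambda^{\varepsilon/100}$.
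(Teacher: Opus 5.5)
Your proposal is correct and is essentially the paper's own argument: each of \eqref{eq3.60}--\eqref{eq3.64} comes from applying the Leibniz rule to the explicit formulas, combining the amplitude bounds of Lemmas \ref{lem-magnetic-amplitude-estimate}--\ref{lem-momentum-amplitude-estimate}, the Mikado bounds, $\|g_{(k)}\|_{L^p_t}\lesssim\tau^{\frac12-\frac1p}$ and $\|h_{(k)}\|_{C_t}\le1$, together with $l^{-10}\ll\lambda$ and, for \eqref{eq3.63}, a cost of $\sigma\tau$ per time derivative of $g_{(k)}$ and $h_{(k)}$. The only slip is a harmless numerical one: $b>1000/\varepsilon$ gives $l^{-1}=\lambda^{30/b}\le\lambda^{3\varepsilon/100}$ rather than $\lambda^{\varepsilon/100}$, which still leaves ample room for every absorption you make.
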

\begin{pf}
    To begin with, using \eqref{eq3.13}, \eqref{eq3.14}, \eqref{eq3.40}, \eqref{eq3.41} and  Lemmas \ref{lem-mikado-flows}, \ref{lem-magnetic-amplitude-estimate} and \ref{lem-momentum-amplitude-estimate}, it leads for any $p\in[1,+\infty]$,
    \begin{align*}
&\quad\|\nabla^Nw_{q+1}^{(p)}\|_{L^p_tC_x}+\|\nabla^Nd_{q+1}^{(p)}\|_{L^p_tC_x}\\
&\lesssim\sum_{k\in\Lambda_m\cup\Lambda_b}\sum_{N_1+N_2=N}\|a_{(k)}\|_{C^{N_1}_x}\|g_{(k)}\|_{L^p_t}\|\nabla^{N_2}W_{(k)}\|_{C_x}\\
&\quad+\sum_{k\in\Lambda_b}\sum_{N_1+N_2=N}\|a_{(k)}\|_{C^{N_1}_x}\|g_{(k)}\|_{L^p_t}\|\nabla^{N_2}D_{(k)}\|_{C_x}\\
&\lesssim\sum_{N_1+N_2=N}l^{-5N_1-1}\tau^{\frac{1}{2}-\frac{1}{p}}\lambda^{N_2}+l^{-10N_1-5}\tau^{\frac{1}{2}-\frac{1}{p}}\lambda^{N_2}\\
&\lesssim l^{-5}\lambda^N\tau^{\frac{1}{2}-\frac{1}{p}},
    \end{align*}
where the last inequality is due to $l^{-10}\ll \lambda$, that is, \eqref{eq3.60} is valid.

Next, by \eqref{eq3.14}, \eqref{eq3.44}, \eqref{eq3.45} and \eqref{eq3.46} and Lemmas \ref{lem-mikado-flows}, \ref{lem-magnetic-amplitude-estimate} and \ref{lem-momentum-amplitude-estimate},
  \begin{align*}
&\quad\|\nabla^Nw_{q+1}^{(c)}\|_{L^p_tC_x}+\|\nabla^Nd_{q+1}^{(c)}\|_{L^p_tC_x}\\
&\lesssim\sum_{k\in\Lambda_m\cup\Lambda_b}\|g_{(k)}\|_{L^p_t}\sum_{N_1+N_2=N}\left(\|a_{(k)}\|_{C^{N_1+2}_x}\|W^c_{(k)}\|_{C^{N_2}_x}+\|a_{(k)}\|_{C^{N_1+1}_x}\|W^c_{(k)}\|_{C^{N_2+1}_x}\right)\\
&\quad+\sum_{k\in\Lambda_b}\|g_{(k)}\|_{L^p_t}\sum_{N_1+N_2=N}\left(\|a_{(k)}\|_{C^{N_1+2}_x}\|D^c_{(k)}\|_{C^{N_2}_x}+\|a_{(k)}\|_{C^{N_1+1}_x}\|D^c_{(k)}\|_{C^{N_2+1}_x}\right)\\
&\lesssim\sum_{N_1+N_2=N}\tau^{\frac{1}{2}-\frac{1}{p}}\left(l^{-5N_1-11}\lambda^{N_2-2}+l^{-5N_1-6}\lambda^{N_2-1}+l^{-10N_1-25}\lambda^{N_2-2}+l^{-10N_1-15}\lambda^{N_2-1}\right)\\
&\lesssim l^{-15}\lambda^{N-1}\tau^{\frac{1}{2}-\frac{1}{p}},
    \end{align*}
    which leads \eqref{eq3.61}.

    Moreover, we can obtain the estimates of temporal correctors by combining \eqref{eq2.21}, \eqref{eq2.22}, \eqref{eq3.15}, \eqref{eq3.50} and \eqref{eq3.51} and Lemmas \ref{lem-mikado-flows}, \ref{lem-magnetic-amplitude-estimate} and \ref{lem-momentum-amplitude-estimate},
    \begin{align*}
        &\quad\|\nabla^Nw_{q+1}^{(o)}\|_{L^p_tC_x}+\|\nabla^Nd_{q+1}^{(o)}\|_{L^p_tC_x}\\
        &\lesssim\sigma^{-1}\sum_{k\in\Lambda_m\cup\Lambda_b}\|h_{(k)}\|_{C_t}\left\|\nabla^{N+1}\left(\rho^{-1}_la^2_{(k)}\right)\right\|_{C_{t,x}}+\sigma^{-1}\sum_{k\in\Lambda_b}\|h_{(k)}\|_{C_t}\left\|\nabla^{N+1}\left(a^2_{(k)}\right)\right\|_{C_{t,x}}\\
        &\lesssim\sigma^{-1}\sum_{k\in\Lambda_m\cup\Lambda_b}\|h_{(k)}\|_{C_t}\sum_{N_1+N_2=N+1}\|\rho_l^{-1}\|_{C^{N_1}_{t,x}}\|a^2_{(k)}\|_{C^{N_2}_{t,x}}+\sigma^{-1}\sum_{k\in\Lambda_b}\|h_{(k)}\|_{C_t}\|a^2_{(k)}\|_{C^{N+1}_{t,x}}\\
        &\lesssim\sigma^{-1}\left(l^{-10N-17}\lambda_q^{\frac{(N+1)\varepsilon}{4}}+l^{-5N-8}\right)\\
        &\lesssim l^{-10N-18}\sigma^{-1},
    \end{align*}
    which the last step goes under \eqref{eq2.3s} and \eqref{eq2.2s}.

    In order to establish the $C^N_{t,x}$-estimates of momentum and magnetic perturbations, we use Lemmas \ref{lem-mikado-flows}--\ref{lem-momentum-amplitude-estimate},
    \begin{equation}\label{eq3.65}
        \begin{aligned}
            &\quad\|w_{q+1}^{(p)}\|_{C^N_{t,x}}+\|d_{q+1}^{(p)}\|_{C^N_{t,x}}\\
            &\lesssim\sum_{k\in\Lambda_m\cup\Lambda_b}\|a_{(k)}\|_{C^N_{t,x}}\sum_{0\leq N_1+N_2\leq N}\|g_{(k)}\|_{C^{N_1}_t}\|W_{(k)}\|_{C^{N_2}_{x}}\\
            &\quad+\sum_{k\in\Lambda_b}\|a_{(k)}\|_{C^N_{t,x}}\sum_{0\leq N_1+N_2\leq N}\|g_{(k)}\|_{C^{N_1}_t}\|D_{(k)}\|_{C^{N_2}_{x}}\\
            &\lesssim\sum_{0\leq N_1+N_2\leq N}\sigma^{N_1}\tau^{\frac{1}{2}+N_1}\lambda^{N_2}\left(l^{-10N-5}+l^{-5N-1}+l^{-2}\right)\\
            &\lesssim \lambda^{2N+2},
        \end{aligned}
    \end{equation}
    where we also use \eqref{eq2.3s}, \eqref{eq2.2s}, \eqref{eq2.17} and \eqref{eq3.1}.

    Similarly, we obtain
    \begin{equation}\label{eq3.66}
        \begin{aligned}
            &\quad\|w_{q+1}^{(c)}\|_{C^N_{t,x}}+\|d_{q+1}^{(c)}\|_{C^N_{t,x}}\\
            &\lesssim\sum_{k\in\Lambda_m\cup\Lambda_b}\|a_{(k)}\|_{C^{N+2}_{t,x}}\sum_{0\leq N_1+N_2\leq N}\|g_{(k)}\|_{C^{N_1}_t}\|W^c_{(k)}\|_{C^{N_2+1}_{x}}\\
            &\quad+\sum_{k\in\Lambda_b}\|a_{(k)}\|_{C^{N+2}_{t,x}}\sum_{0\leq N_1+N_2\leq N}\|g_{(k)}\|_{C^{N_1}_t}\|D^c_{(k)}\|_{C^{N_2+1}_{x}}\\
            &\lesssim\sum_{0\leq N_1+N_2\leq N}\sigma^{N_1}\tau^{\frac{1}{2}+N_1}\lambda^{N_2-1}\left(l^{-10N-25}+l^{-5N-11}+l^{-2}\right)\\
            &\lesssim \lambda^{2N+1},
        \end{aligned}
    \end{equation}
    and
     \begin{equation}\label{eq3.67}
        \begin{aligned}
            &\quad\|w_{q+1}^{(o)}\|_{C^N_{t,x}}+\|d_{q+1}^{(o)}\|_{C^N_{t,x}}\\
            &\lesssim\sigma^{-1}\sum_{k\in\Lambda_m\cup\Lambda_b}\left\|h_{(k)}\nabla^{N+1}\left(\rho^{-1}_la^2_{(k)}\right)\right\|_{C^N_{t,x}}\\
            &\quad+\sigma^{-1}\sum_{k\in\Lambda_b}\left\|h_{(k)}\nabla^{N+1}\left(a^2_{(k)}\right)\right\|_{C^N_{t,x}}\\
            &\lesssim\sigma^{N-1}\tau^N\left(l^{-10N-19}+l^{-5N-9}\right)\\
            &\lesssim\lambda^{2N+1},
        \end{aligned}
    \end{equation}
    where the last step is due to \eqref{eq2.3s}, \eqref{eq2.2s}, \eqref{eq2.17}, \eqref{eq3.1} and the fact that
    \begin{align*}
        \|h_{(k)}\|_{C^N_t}\lesssim\sigma^N\tau^N.
    \end{align*}
Thus, \eqref{eq3.63} can be obtained by combining \eqref{eq3.65}--\eqref{eq3.67}.

It remains to check \eqref{eq3.64}. This can be done by using \eqref{eq3.13} and \eqref{eq3.57} and Lemmas \ref{lem-mikado-flows}--\ref{lem-momentum-amplitude-estimate}:
\begin{align*}
    &\quad\|\partial_t^Mz_{q+1}\|_{C_tC^N_x}\\
    &\lesssim\sigma^{-1}\sum_{k\in\Lambda_m\cup\Lambda_b}\left\|\partial_t^M\int_0^th_{(k)}{\rm div}\left(k_1\otimes k_1\nabla \left(\rho_l^{-1}a^2_{(k)}\right)\right)(s,x){\rm d}s\right\|_{C_tC^N_x}\\
    &\quad+\sigma^{-1}\sum_{k\in\Lambda_b}\mu\left\|\partial^M_t\int_0^th_{(k)}{\rm div}\left(\left(k_2\otimes k_2-\frac{1}{2}|k_2|^2\mathbb{I}\right)\nabla \left(a^2_{(k)}\right)\right)(s,x){\rm d}s\right\|_{C_tC^N_x}\\
    &\lesssim\sigma^{-1}\left(\sum_{k\in\Lambda_m\cup\Lambda_b}\|h_{(k)}\|_{C_t}\|\rho_l^{-1}a^2_{(k)}\|_{C_tC^{N+2}_x}+\sum_{k\in\Lambda_b}\|h_{(k)}\|_{C_t}\|a^2_{(k)}\|_{C_tC^{N+2}_x}\right)\\
    &\lesssim\sigma^{-1}\sum_{k\in\Lambda_m\cup\Lambda_b}\left(\|\rho_l^{-1}\|_{C_tC_x^{N+2}}\|a^2_{(k)}\|_{C_{t,x}}+\sum_{1\leq N^{\prime}\leq N+2}\|\rho_l^{-1}\|_{C_tC_x^{N+2-N^{\prime}}}\|a^2_{(k)}\|_{C_tC_x^{N^{\prime}}}\right)\\
    &\quad+\sigma^{-1}\sum_{k\in\Lambda_b}\|a^2_{(k)}\|_{C^{N+2}_{t,x}}\\
    &\lesssim\sigma^{-1}\left(\lambda_q^{\frac{(N+2)\varepsilon}{4}}l^{-4}+l^{-5N-13}+\sum_{1\leq N^{\prime}\leq N+2}\lambda_q^{\frac{(N+2-N^{\prime})\varepsilon}{4}}l^{-10N^{\prime}-7}\right)\\
    &\lesssim l^{-10N-28}\sigma^{-1}.
\end{align*}
Therefore, the proof of Proposition \ref{prop-estimate-perturbation} has been finished.
    \hfill$\square$
\end{pf}
\subsection{Verification of inductive estimates}
We are now in the stage to verify the inductive estimates \eqref{eq2.5}--\eqref{eq2.7} and \eqref{eq2.10}--\eqref{eq2.13} for the perturbations.

Our first concern is the density function $\rho_{q+1}$. By \eqref{eq2.17}, \eqref{eq2.21}, \eqref{eq3.1}, \eqref{eq3.58} and \eqref{eq3.64},
\begin{align}\label{eq3.68}
    \rho_{q+1}\leq\rho_l+\|z_{q+1}\|_{C_{t,x}}\leq C_2-\frac{1}{2}\lambda_q^{-\beta}+l^{-28}\sigma^{-1}\leq C_2-\frac{1}{2}\lambda_q^{-\beta}+\lambda_{q+1}^{-14\varepsilon}\leq C_2-\lambda_{q+1}^{-\beta},
\end{align}
where we used $\lambda_{q+1}^{-14\varepsilon}+\lambda_{q+1}^{-\beta}\ll\frac{1}{2}\lambda_q^{-\beta}$ in the last inequality, because of \eqref{eq2.3s}. Similarly, we also obtain
\begin{align}\label{eq3.69}
    \rho_{q+1}\geq\rho_l-\|z_{q+1}\|_{C_{t,x}}\geq C_1+\frac{1}{2}\lambda_q^{-\beta}-l^{-28}\sigma^{-1}\geq C_1+\frac{1}{2}\lambda_q^{-\beta}-\lambda_{q+1}^{-14\varepsilon}\geq C_1+\lambda_{q+1}^{-\beta}.
\end{align}
Then \eqref{eq2.5} is valid at level $q+1$ by combining \eqref{eq3.68} and \eqref{eq3.69}.

Next, by \eqref{eq3.57} and \eqref{eq3.58},
\begin{align}\label{eq3.70}
    \int_{\mathbb{T}^3}\rho_{q+1}(t,x){\rm d}x=\int_{\mathbb{T}^3}\rho_l(t,x){\rm d}x+\int_{\mathbb{T}^3}z_{q+1}(t,x){\rm d}x=\int_{\mathbb{T}^3}\rho_{q}(t,x){\rm d}x,
\end{align}
for any $t\in[0,T]$, which gives that \eqref{eq2.10} at level $q+1$.

Moreover, for any $0\leq N\leq4$, $0\leq M\leq1$, we obtain 
\begin{equation}\label{eq3.71}
    \begin{aligned}
        \|\partial_t^M\rho_{q+1}\|_{C_tC^N_x}
        &\lesssim\|\partial_t^M\rho_{l}\|_{C_tC^N_x}+\|\partial_t^Mz_{q+1}\|_{C_tC^N_x}\\
        &\lesssim\lambda_q^{\frac{N\varepsilon}{4}}+l^{-10N-28}\sigma^{-1}\\
        &\lesssim \lambda_{q+1}^{\frac{N\varepsilon}{4}},
    \end{aligned}
\end{equation}
by using \eqref{eq2.22} and \eqref{eq3.64} which leads \eqref{eq2.6} at level $q+1$.

We also derive from \eqref{eq2.23} and \eqref{eq3.64} that
\begin{equation}\label{eq3.72}
    \begin{aligned}
        \|\rho_{q+1}-\rho_q\|_{C_tC^1_x}
        &\lesssim\|\rho_{l}-\rho_q\|_{C_tC^1_x}+\|z_{q+1}\|_{C_tC^1_x}\\
        &\lesssim l\lambda_q^{\frac{\varepsilon}{2}}+l^{-38}\sigma^{-1}\\
        &\lesssim\delta^{\frac{1}{2}}_{q+2},
    \end{aligned}
\end{equation}
where the last step goes under \eqref{eq2.3s}, \eqref{eq2.4}, \eqref{eq2.17} and \eqref{eq3.1}, that is, it yields \eqref{eq2.11}.

Then, we consider the momentum and magnetic perturbations. To begin with, by virtue of \eqref{eq2.24} and \eqref{eq3.63}, we have
\begin{equation}\label{eq3.73}
    \begin{aligned}
        \|m_{q+1}\|_{C^N_{t,x}}+\|b_{q+1}\|_{C^N_{t,x}}
        &\lesssim \|m_{l}\|_{C^N_{t,x}}+ \|w_{q+1}\|_{C^N_{t,x}}+\|b_{l}\|_{C^N_{t,x}}+ \|d_{q+1}\|_{C^N_{t,x}}\\
        &\lesssim l^{-N+1}\lambda^{4}_q+\lambda_{q+1}^{2N+2}\\
        &\lesssim\lambda_{q+1}^{2N+2},
    \end{aligned}
\end{equation}
which leads \eqref{eq2.7} at level $q+1$.

Note that, the estimates in \eqref{eq3.60} alone do not yield the decay required by \eqref{eq2.12}. The technique here is the key $L^p$-decorrelation, which permits to derive the $L^2_tC_x$-decay of the principal parts $w^{(p)}_{q+1}$ and $d^{(p)}_{q+1}$.
\begin{lem}\label{lem-lp-decorrelation}($L^p$-decorrelation, \cite[Lemma 3.7]{BV-2019-annmath}, \cite[Lemma 2.4]{CL-2021-annpde}). Let $\sigma\in\mathbb{N}$ and $f,g:\mathbb{T}^N\rightarrow\mathbb{R}$ be smooth functions. Then for any $p\in[1,+\infty]$,
\begin{align}\label{eq3.74}
    \left|\|fg(\sigma\cdot)\|_{L^p(\mathbb{T}^N)}-\|f\|_{L^p(\mathbb{T}^N)}\|g\|_{L^p(\mathbb{T}^N)}\right|\lesssim\sigma^{-\frac{1}{p}}\|f\|_{C^1(\mathbb{T}^N)}\|g\|_{L^p(\mathbb{T}^N)}.
\end{align}    
\end{lem}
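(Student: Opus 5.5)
The plan is the standard proof of the $L^p$-decorrelation lemma from \cite{BV-2019-annmath}: split $\mathbb{T}^N$ into small cubes adapted to the period of $g(\sigma\cdot)$, freeze $f$ on each cube, and use that $g(\sigma\cdot)$ has exactly the same $L^p$ mass on every such cube. The case $p=\infty$ is separate and easy. There $\sigma^{-1/p}=1$, and the left side is bounded by $\|fg(\sigma\cdot)\|_{L^\infty}+\|f\|_{L^\infty}\|g\|_{L^\infty}\le 2\|f\|_{C^0}\|g\|_{L^\infty}$. So from now on we assume $p<\infty$ and work with $p$-th powers.

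First we partition $\mathbb{T}^N=[-\pi,\pi]^N$ into $\sigma^N$ cubes $Q_j$ of side $2\pi/\sigma$. Since $\sigma\in\mathbb{N}$, the function $g(\sigma\cdot)$ is $2\pi/\sigma$-periodic in each variable. Rescaling gives
\[
\int_{Q_j}|g(\sigma x)|^p\,{\rm d}x=\sigma^{-N}\|g\|_{L^p}^p=\frac{|Q_j|}{|\mathbb{T}^N|}\|g\|^p_{L^p}
\]
for every $j$. Here $L^p$ is normalized consistently with the statement; with the un-normalized measure the constant $|\mathbb{T}^N|$ is absorbed into $\lesssim$.

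Next we freeze $f$. On $Q_j$ we have $|f(x)-f(x_j)|\le \sqrt N\,2\pi\sigma^{-1}\|f\|_{C^1}$ for any $x_j\in Q_j$. For $p$-th powers we use $\big||a|^p-|b|^p\big|\le p\max(|a|,|b|)^{p-1}|a-b|$. This gives
\[
\Big|\int_{Q_j}|f|^p|g(\sigma x)|^p-|f(x_j)|^p\int_{Q_j}|g(\sigma x)|^p\Big|\lesssim \sigma^{-1}\|f\|_{C^0}^{p-1}\|f\|_{C^1}\int_{Q_j}|g(\sigma x)|^p.
\]
Summing over $j$ and using the equal-mass identity, we get
\[
\|fg(\sigma\cdot)\|_{L^p}^p=\Big(\sum_j|Q_j|\,|f(x_j)|^p\Big)\|g\|_{L^p}^p+O\big(\sigma^{-1}\|f\|_{C^1}^p\|g\|_{L^p}^p\big).
\]
The same freezing estimate, applied without $g$, gives $\sum_j|Q_j||f(x_j)|^p=\|f\|_{L^p}^p+O(\sigma^{-1}\|f\|_{C^1}^p)$. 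Combining the two,
\[
\Big|\|fg(\sigma\cdot)\|_{L^p}^p-\|f\|_{L^p}^p\|g\|_{L^p}^p\Big|\lesssim\sigma^{-1}\|f\|_{C^1}^p\|g\|_{L^p}^p .
\]

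The main obstacle is turning this $p$-th power estimate into the unpowered estimate \eqref{eq3.74} with the correct rate $\sigma^{-1/p}$. The key tool is the elementary inequality $|A-B|\le|A^p-B^p|^{1/p}$ for $A,B\ge0$ and $p\ge1$. It holds because, if say $A\ge B$, then $(A-B)^p+B^p\le A^p$ by superadditivity of $x\mapsto x^p$. Taking $A=\|fg(\sigma\cdot)\|_{L^p}$ and $B=\|f\|_{L^p}\|g\|_{L^p}$ and applying it to the previous display gives
\[
\big|\|fg(\sigma\cdot)\|_{L^p}-\|f\|_{L^p}\|g\|_{L^p}\big|\lesssim\sigma^{-1/p}\|f\|_{C^1}\|g\|_{L^p},
\]
which is exactly \eqref{eq3.74}. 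The implicit constant depends only on $N$ and $p$.
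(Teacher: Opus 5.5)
The paper gives no proof of this lemma; it quotes it from \cite{BV-2019-annmath} and \cite{CL-2021-annpde}. Your argument is the standard proof from \cite{CL-2021-annpde}: partition into $\sigma^N$ cubes, freeze $|f|^p$ on each cube, use that $g(\sigma\cdot)$ has the same $L^p$-mass on every cube, and take $p$-th roots via $|A-B|\le|A^p-B^p|^{1/p}$. It is correct provided $\mathbb{T}^N$ carries the normalized measure, so that $|\mathbb{T}^N|=1$ and $|Q_j|=\sigma^{-N}$. With that convention every display you wrote is exactly right.

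Your closing remark on normalization, however, is wrong and should be deleted. With un-normalized Lebesgue measure on $[-\pi,\pi]^N$, the factor $|\mathbb{T}^N|$ cannot be absorbed into $\lesssim$, because the two-sided lemma itself fails for $p<\infty$. Take $f\equiv g\equiv 1$: the left-hand side equals $(2\pi)^{N/p}\big|(2\pi)^{N/p}-1\big|$ for every $\sigma$, while the right-hand side is $O(\sigma^{-1/p})$. The mismatch is in the leading terms, not the error term, so no constant absorbs it.

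In your computation this shows up as a dropped factor. With $\|h\|_{L^p}^p=\int_{\mathbb{T}^N}|h|^p$, summing over cubes actually gives $\|fg(\sigma\cdot)\|_{L^p}^p=|\mathbb{T}^N|^{-1}\big(\sum_j|Q_j|\,|f(x_j)|^p\big)\|g\|_{L^p}^p+O(\sigma^{-1}\|f\|_{C^1}^p\|g\|_{L^p}^p)$. The two-sided estimate therefore only holds with a factor $|\mathbb{T}^N|^{-1/p}$ in front of $\|f\|_{L^p}\|g\|_{L^p}$. What does survive without normalization is the one-sided bound $\|fg(\sigma\cdot)\|_{L^p}\lesssim\|f\|_{L^p}\|g\|_{L^p}+\sigma^{-1/p}\|f\|_{C^1}\|g\|_{L^p}$. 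That bound is all the paper actually uses, in \eqref{eq3.75}.
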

We apply Lemma \ref{lem-lp-decorrelation} with $N=1$, $f=\|a_{(k)}(t,\cdot)\|_{L^{\infty}}$, $g=g_{(k)}$ and $\sigma=\lambda_{q+1}^{15\varepsilon}$, furthermore, by  \eqref{eq2.3s}, \eqref{eq2.2s} and  Lemmas \ref{lem-mikado-flows}--\ref{lem-momentum-amplitude-estimate}, we obtain
\begin{align}\label{eq3.75}
    \begin{aligned}
        \|w^{(p)}_{q+1}\|_{L^2_tC_x}+\|d^{(p)}_{q+1}\|_{L^2_tC_x}      &\lesssim\sum_{k\in\Lambda_m\cup\Lambda_b}\left\|\|a_{(k)}\|_{C_x}g_{(k)}\right\|_{L^2_t}\|W_{(k)}\|_{C_x}\\
        &\quad+\sum_{k\in\Lambda_b}\left\|\|a_{(k)}\|_{C_x}g_{(k)}\right\|_{L^2_t}\|D_{(k)}\|_{C_x}\\      &\lesssim\sum_{k\in\Lambda_m\cup\Lambda_b}\left(\|a_{(k)}\|_{L^2_tC_x}\|g_{(k)}\|_{L^2_t}+\sigma^{-\frac{1}{2}}\|a_{(k)}\|_{C^1_{t,x}}\|g_{(k)}\|_{L^2_t}\right)\|\phi_{(k)}\|_{C_x}\\
        &\lesssim\delta_{q+1}^{\frac{1}{2}}+\sigma^{-\frac{1}{2}}\left(l^{-15}+l^{-6}\right)\\
        &\lesssim\delta_{q+1}^{\frac{1}{2}}.
    \end{aligned}
\end{align}
Then, taking into account \eqref{eq2.3s}, \eqref{eq3.61}, \eqref{eq3.62} and \eqref{eq3.75}, we deduce
\begin{equation}\label{eq3.76}
    \begin{aligned}
       \|w_{q+1}\|_{L^2_tC_x}+\|d_{q+1}\|_{L^2_tC_x}
       &\lesssim\|w^{(p)}_{q+1}\|_{L^2_tC_x}+\|w^{(c)}_{q+1}\|_{L^2_tC_x}+\|w^{(o)}_{q+1}\|_{L^2_tC_x}\\
       &\quad+\|d^{(p)}_{q+1}\|_{L^2_tC_x}+\|d^{(c)}_{q+1}\|_{L^2_tC_x}+\|d^{(o)}_{q+1}\|_{L^2_tC_x}\\
       &\lesssim \delta_{q+1}^{\frac{1}{2}}+l^{-15}\lambda_{q+1}^{-1}+l^{-18}\sigma^{-1}\\
       &\lesssim\delta_{q+1}^{\frac{1}{2}}.
    \end{aligned}
\end{equation}
Moreover, using \eqref{eq2.3s} and Proposition \ref{prop-estimate-perturbation} with $p=1$ and $N=0$ yields that
\begin{equation}\label{eq3.77}
    \begin{aligned}
       \|w_{q+1}\|_{L^1_tC_x}+\|d_{q+1}\|_{L^1_tC_x}
       &\lesssim\|w^{(p)}_{q+1}\|_{L^1_tC_x}+\|w^{(c)}_{q+1}\|_{L^1_tC_x}+\|w^{(o)}_{q+1}\|_{L^1_tC_x}\\
       &\quad+\|d^{(p)}_{q+1}\|_{L^1_tC_x}+\|d^{(c)}_{q+1}\|_{L^1_tC_x}+\|d^{(o)}_{q+1}\|_{L^1_tC_x}\\
       &\lesssim l^{-5}\tau^{-\frac{1}{2}}+l^{-15}\lambda_{q+1}^{-1}\tau^{-\frac{1}{2}}+l^{-18}\sigma^{-1}\\
       &\lesssim\delta_{q+2}^{\frac{1}{2}}.
    \end{aligned}
\end{equation}
Furthermore, by combining with \eqref{eq2.25}, \eqref{eq3.76} and \eqref{eq3.77}, we have
\begin{equation}\label{eq3.78}
    \begin{aligned}
       &\quad\|m_{q+1}-m_q\|_{L^2_tC_x}+\|b_{q+1}-b_q\|_{L^2_tC_x}\\
       &\lesssim \|m_{q+1}-m_l\|_{L^2_tC_x}+\|m_{l}-m_q\|_{L^2_tC_x}+\|b_{q+1}-b_l\|_{L^2_tC_x}+\|b_{l}-b_q\|_{L^2_tC_x}\\
       &\lesssim \|m_{l}-m_q\|_{C_{t,x}}+ \|b_{l}-b_q\|_{C_{t,x}}+\|w_{q+1}\|_{L^2_tC_x}+\|d_{q+1}\|_{L^2_tC_x}\\
       &\lesssim l\lambda^4_q+\delta_{q+1}^{\frac{1}{2}}\\
       &\lesssim\delta_{q+1}^{\frac{1}{2}}.
    \end{aligned}
\end{equation}
and
\begin{equation}\label{eq3.79}
    \begin{aligned}
       &\quad\|m_{q+1}-m_q\|_{L^1_tC_x}+\|b_{q+1}-b_q\|_{L^1_tC_x}\\
       &\lesssim \|m_{q+1}-m_l\|_{L^1_tC_x}+\|m_{l}-m_q\|_{L^1_tC_x}+\|b_{q+1}-b_l\|_{L^1_tC_x}+\|b_{l}-b_q\|_{L^1_tC_x}\\
       &\lesssim \|m_{l}-m_q\|_{C_{t,x}}+ \|b_{l}-b_q\|_{C_{t,x}}+\|w_{q+1}\|_{L^1_tC_x}+\|d_{q+1}\|_{L^1_tC_x}\\
       &\lesssim l\lambda^4_q+\delta_{q+2}^{\frac{1}{2}}\\
       &\lesssim\delta_{q+2}^{\frac{1}{2}}.
    \end{aligned}
\end{equation}

Regarding the $L^p_tC_x^s$-decay of the momentum and magnetic perturbations with $(p,s)$ satisfying \eqref{eq1.7s}, using interpolation and Proposition \ref{prop-estimate-perturbation}, we get
\begin{equation}\label{eq3.80}
    \begin{aligned}
        &\quad\|w^{(p)}_{q+1}\|_{L^p_tC_x^s}+\|d^{(p)}_{q+1}\|_{L^p_tC_x^s}\\
        &\lesssim\|w^{(p)}_{q+1}\|_{L^p_tC_x}^{1-s}\|w^{(p)}_{q+1}\|_{L^p_tC_x^1}^s+\|d^{(p)}_{q+1}\|_{L^p_tC_x}^{1-s}\|d^{(p)}_{q+1}\|_{L^p_tC_x^1}^s\\
        &\lesssim\left(l^{-5}\tau^{\frac{1}{2}-\frac{1}{p}}\right)^{1-s}\left(l^{-5}\lambda_{q+1}\tau^{\frac{1}{2}-\frac{1}{p}}\right)^s\\
        &\lesssim l^{-5}\lambda_{q+1}^s\tau^{\frac{1}{2}-\frac{1}{p}}\\
        &\lesssim\lambda_{q+1}^{\alpha-\frac{2\alpha}{p}+s+\varepsilon(\frac{10}{p}-4)},
    \end{aligned}
\end{equation}
and, similarly,
\begin{align}    &\|w^{(c)}_{q+1}\|_{L^p_tC_x^s}+\|d^{(c)}_{q+1}\|_{L^p_tC_x^s}\lesssim l^{-55}\lambda_{q+1}^{s-1}\tau^{\frac{1}{2}-\frac{1}{p}}\lesssim\lambda_{q+1}^{\alpha-1-\frac{2\alpha}{p}+s+\varepsilon(\frac{10}{p}-4)},\label{eq3.81}\\
    &\|w^{(o)}_{q+1}\|_{L^p_tC_x^s}+\|d^{(o)}_{q+1}\|_{L^p_tC_x^s}\lesssim l^{-18-10s}\sigma^{-1}\lesssim l^{-28}\lambda_{q+1}^{-15\varepsilon}.\label{eq3.82}
\end{align}
Thus, taking into account \eqref{eq2.25} and \eqref{eq3.80}--\eqref{eq3.82} we obtain
\begin{equation}\label{eq3.83}
    \begin{aligned}
        &\quad\|m_{q+1}-m_q\|_{L^p_tC_x^s}+\|b_{q+1}-b_q\|_{L^p_tC_x^s}\\
    &\lesssim\|m_{l}-m_q\|_{L^p_tC^s_x}+ \|b_{l}-b_q\|_{L^p_tC^s_x}+\|w_{q+1}\|_{L^p_tC^s_x}+\|d_{q+1}\|_{L^p_tC^s_x}\\
    &\lesssim\|m_{l}-m_q\|_{L^p_tC_x}^{1-s}\|m_{l}-m_q\|_{L^p_tC^1_x}^s+\|w_{q+1}\|_{L^p_tC^s_x}\\
    &\quad+\|b_{l}-b_q\|_{L^p_tC_x}^{1-s}\|b_{l}-b_q\|_{L^p_tC^1_x}^s+\|d_{q+1}\|_{L^p_tC^s_x}\\
    &\lesssim \left(l\lambda_q^4\right)^{1-s}\left(l\lambda_q^6\right)^{s}+\lambda_{q+1}^{\alpha-\frac{2\alpha}{p}+s+\varepsilon(\frac{10}{p}-4)}+l^{-28}\lambda_{q+1}^{-15\varepsilon},
    \end{aligned}
\end{equation}
which together with \eqref{eq2.3s} and \eqref{eq2.2s} yields that
\begin{align}\label{eq3.84}
    \|m_{q+1}-m_q\|_{L^p_tC_x^s}+\|b_{q+1}-b_q\|_{L^p_tC_x^s}\lesssim\delta_{q+2}^{\frac{1}{2}}.
\end{align}

Finally, concerning the $C_tH^{-1}_x$-decay of the momentum and magnetic perturbations, by using \eqref{eq2.25}, \eqref{eq3.14}, \eqref{eq3.46}--\eqref{eq3.48}, \eqref{eq3.50}, \eqref{eq3.51} and Lemma \ref{lem-magnetic-amplitude-estimate} and \ref{lem-momentum-amplitude-estimate},
\begin{equation}\label{eq3.85}
    \begin{aligned}
        &\quad\|m_{q+1}-m_q\|_{C_tH^{-1}_x}+\|b_{q+1}-b_q\|_{C_tH^{-1}_x}\\
        &\lesssim\|m_l-m_q\|_{C_tH^{-1}_x}+\|b_l-b_q\|_{C_tH^{-1}_x}+\|w_{q+1}\|_{C_tH^{-1}_x}+\|d_{q+1}\|_{C_tH^{-1}_x}\\
        &\lesssim\|m_l-m_q\|_{C_{t,x}}+\|w^{(p)}_{q+1}+w^{(c)}_{q+1}\|_{C_tH^{-1}_x}+\|w^{(o)}_{q+1}\|_{C_tL^2_x}\\
        &\quad+\|b_l-b_q\|_{C_{t,x}}+\|d^{(p)}_{q+1}+d^{(c)}_{q+1}\|_{C_tH^{-1}_x}+\|d^{(o)}_{q+1}\|_{C_tL^2_x}\\
        &\lesssim l\lambda_q^4+\sum_{k\in\Lambda_m\cup\Lambda_b}\|g_{(k)}{\rm curl}\left(a_{(k)}W^c_{(k)}\right)\|_{C_tL^2_x}+\sum_{k\in\Lambda_b}\|g_{(k)}{\rm curl}\left(a_{(k)}D^c_{(k)}\right)\|_{C_tL^2_x}+l^{-18}\sigma^{-1}\\
        &\lesssim l\lambda_q^4+l^{-18}\sigma^{-1}+\sum_{k\in\Lambda_b}\|g_{(k)}\|_{C_t}\left(\|\nabla a_{(k)}\|_{C_{t,x}}\|D^c_{(k)}\|_{L^2_x}+\|a_{(k)}\|_{C_{t,x}}\|\nabla D^c_{(k)}\|_{L^2_x}\right)\\
        &\quad+\sum_{k\in\Lambda_m\cup\Lambda_b}\|g_{(k)}\|_{C_t}\left(\|\nabla a_{(k)}\|_{C_{t,x}}\|W^c_{(k)}\|_{L^2_x}+\|a_{(k)}\|_{C_{t,x}}\|\nabla W^c_{(k)}\|_{L^2_x}\right)\\
        &\lesssim \lambda_q^{-26}+\lambda_{q+1}^{-14\varepsilon}+l^{-15}\lambda_{q+1}^{\alpha-1-5\varepsilon}\\
        &\lesssim\delta_{q+2}^{\frac{1}{2}},
    \end{aligned}
\end{equation}
where the last inequality is valid due to \eqref{eq2.3s} and \eqref{eq2.2s}.

Thus far, the inductive estimates \eqref{eq2.5}--\eqref{eq2.7} and \eqref{eq2.10}--\eqref{eq2.13} are verified at level $q+1$.
\section{Reynolds and magnetic stresses}\label{sec4}
We are now in the stage to treat the delicate Reynolds and magnetic stresses and check the corresponding inductive estimates \eqref{eq2.8} and \eqref{eq2.9} at level $q+1$.

To begin with, we recall from \cite{LZZ-2022-jmpa} the key inverse-divergence operators $\mathcal{R}^m$ and $\mathcal{R}^b$, which are defined by
\begin{align}  &\left(\mathcal{R}^mv\right)_{kl}:=\partial_k\Delta^{-1}v_l+\partial_l\Delta^{-1}v_k-\frac{1}{2}\left(\delta_{kl}+\partial_k\partial_l\Delta^{-1}\right){\rm div}\Delta^{-1}v,\label{eq4.1}\\
&\left(\mathcal{R}^bf\right)_{ij}=\epsilon_{ijk}(-\Delta)^{-1}\left({\rm curl}f\right)_k,\label{eq4.2}
\end{align}
where $\epsilon_{ijk}$ is the Levi-Civita tensor and the smooth vector fields $v$ and $f$ are both mean-free, moreover, $f$ is divergence-free.

The operator $\mathcal{R}^m$ maps smooth vector field to symmetric and trace-free matrices, while the operator $\mathcal{R}^b$ returns skew-symmetric matrices. Moreover, one has the following identities
\begin{align}\label{eq4.3}
    {\rm div}\mathcal{R}^m(v)=v,\quad{\rm div}\mathcal{R}^b(f)=f.
\end{align}
We note that $\mathcal{R}^m\nabla$, $\mathcal{R}^m{\rm div}$, $\mathcal{R}^b\nabla$ and $\mathcal{R}^b{\rm div}$ are all Calder\'on-Zygmund operators and thus they are bounded in periodic H\"older spaces. See \cite{CZ-1954-studiamath,DCS-2013-invent,BBV-2020-annpde} for details.
\subsection{Decomposition of Reynolds and magnetic stresses}
For more precise estimates, we need to decompose Reynolds and magnetic stresses appropriately.
\subsubsection{Decomposition of magnetic stress}
Using $\eqref{eq2.1}_3$ at level $q+1$, $\eqref{eq2.18}_3$, \eqref{eq2.20}, \eqref{eq3.25}, \eqref{eq3.42} and \eqref{eq3.53}--\eqref{eq3.55}, we derive that 
\begin{equation}\label{eq4.4}
    \begin{aligned}
        {\rm div}R^b_{q+1}=
&\partial_t\left(d_{q+1}^{(p)}+d_{q+1}^{(c)}\right)+\eta\mu^{-1}(-\Delta)^{\alpha_2}d_{q+1}+{\rm div}\left((\rho_{q+1}^{-1}-\rho_l^{-1})(b_l\otimes m_l-m_l\otimes b_l)\right)\\
&\underbrace{+{\rm div}\left(\rho_{q+1}^{-1}(b_l\otimes w_{q+1}+d_{q+1}\otimes m_l-m_l\otimes d_{q+1}-w_{q+1}\otimes b_l)\right)\qquad\qquad\qquad}_{{\rm div}R^b_{{\rm lin}}}\\
&+{\rm div}\left(\rho_l^{-1}(d_{q+1}^{(p)}\otimes w_{q+1}^{(p)}-w_{q+1}^{(p)}\otimes d_{q+1}^{(p)})+R^b_l\right)+\partial_td_{q+1}^{(o)}\\
&\underbrace{+{\rm div}\left((\rho_{q+1}^{-1}-\rho_l^{-1})(d_{q+1}\otimes w_{q+1}-w_{q+1}\otimes d_{q+1})\right)\qquad\quad}_{{\rm div}R^b_{{\rm osc}}}\\
&+{\rm div}\left(\rho_l^{-1}\left(d_{q+1}^{(p)}\otimes (w_{q+1}^{(c)}+w_{q+1}^{(o)})-w_{q+1}^{(p)}\otimes (d_{q+1}^{(c)}+d_{q+1}^{(o)})\right)\right)\\
&\underbrace{+{\rm div}\left(\rho_l^{-1}\left((d_{q+1}^{(c)}+d_{q+1}^{(o)})\otimes w_{q+1}-(w_{q+1}^{(c)}+w_{q+1}^{(o)})\otimes d_{q+1}\right)\right)\quad}_{{\rm div}R^b_{{\rm cor}}}\\
&+{\rm div}R^b_{{\rm com}},
    \end{aligned}
\end{equation}
where $R^b_{{\rm com}}$ is the commutator stress to the magnetic equation given by \eqref{eq2.20}.

Using the inverse-divergence operator $\mathcal{R}^b$, we define the new magnetic stress at level $q+1$ by
\begin{align}\label{eq4.5}
    R^b_{q+1}:=R^b_{{\rm lin}}+R^b_{{\rm osc}}+R^b_{{\rm cor}}+R^b_{{\rm com}},
\end{align}
where the linear error
\begin{equation}\label{eq4.6}
    \begin{aligned}
        R^b_{{\rm lin}}&:=
        \mathcal{R}^b\partial_t\left(d_{q+1}^{(p)}+d_{q+1}^{(c)}\right)+\eta\mu^{-1}\mathcal{R}^b(-\Delta)^{\alpha_2}d_{q+1}+\mathcal{R}^b{\rm div}\left((\rho_{q+1}^{-1}-\rho_l^{-1})(b_l\otimes m_l-m_l\otimes b_l)\right)\\
&\quad+\mathcal{R}^b{\rm div}\left(\rho_{q+1}^{-1}(b_l\otimes w_{q+1}+d_{q+1}\otimes m_l-m_l\otimes d_{q+1}-w_{q+1}\otimes b_l)\right)\\
&=:R^b_{{\rm lin},1}+R^b_{{\rm lin},2}+R^b_{{\rm lin},3}+R^b_{{\rm lin},4},
    \end{aligned}
\end{equation}
the oscillation error
\begin{equation}\label{eq4.7}
    \begin{aligned}
        R^b_{{\rm osc}}&:=      \sum_{k\in\Lambda_b}\mathcal{R}^b\mathbb{P}_{\neq0}\left(\mathbb{P}_{\neq0}(D_{(k)}\otimes W_{(k)}-W_{(k)}\otimes D_{(k)})\nabla\left(\rho_l^{-1}a^2_{(k)}\right)\right)\\
        &\quad-\sigma^{-1}\sum_{k\in\Lambda_b}h_{(k)}\partial_t\mathcal{R}^b(k_2\otimes k_1-k_1\otimes k_2)\nabla(a^2_{(k)})\\
        &\quad+\mathcal{R}^b{\rm div}\left((\rho_{q+1}^{-1}-\rho_l^{-1})(d_{q+1}\otimes w_{q+1}-w_{q+1}\otimes d_{q+1})\right)\\
        &=:R^b_{{\rm osc},1}+R^b_{{\rm osc},2}+R^b_{{\rm osc},3},
    \end{aligned}
\end{equation}
the corrector error
\begin{equation}\label{eq4.8}
    \begin{aligned}
        R^b_{{\rm cor}}
        &:=
\mathcal{R}^b{\rm div}\left(\rho_l^{-1}\left(d_{q+1}^{(p)}\otimes (w_{q+1}^{(c)}+w_{q+1}^{(o)})-w_{q+1}^{(p)}\otimes (d_{q+1}^{(c)}+d_{q+1}^{(o)})\right)\right)\\
&\quad+\mathcal{R}^b{\rm div}\left(\rho_l^{-1}\left((d_{q+1}^{(c)}+d_{q+1}^{(o)})\otimes w_{q+1}-(w_{q+1}^{(c)}+w_{q+1}^{(o)})\otimes d_{q+1}\right)\right),
    \end{aligned}
\end{equation}
and $R^b_{{\rm com}}$ is the commutator error given by \eqref{eq2.20}.

We also note that, the nonlinear terms in the
magnetic equation is skew-symmetric, which, in particular, yields that
\begin{align}\label{eq4.9}
    {\rm div}\left({\rm div}R^b_{q+1}\right)=0.
\end{align}
Furthermore, it leads 
\begin{align}\label{eq4.10}
    R^b_{q+1}=\mathcal{R}^b{\rm div}R^b_{q+1}.
\end{align}
\subsubsection{Decomposition of Reynolds stress}
Now we concern the Reynolds stress. By virtue of $\eqref{eq2.1}_2$, $\eqref{eq2.18}_2$, \eqref{eq2.19}, \eqref{eq3.43}, \eqref{eq3.52}, \eqref{eq3.54} and \eqref{eq3.55}, we compute
\begin{equation}\label{eq4.11}
    \begin{aligned}
        {\rm div}R^m_{q+1}=
&\partial_t\left(w_{q+1}^{(p)}+w_{q+1}^{(c)}\right)+\nu^s(-\Delta)^{\alpha_1}\left((\rho^{-1}_{q+1}-\rho^{-1}_l)m_l+(\rho^{-1}_{q+1}-\rho^{-1}_l)w_{q+1}+\rho^{-1}_lw_{q+1}\right)\\
&-(\nu^b+\frac{1}{3}\nu^s)\nabla{\rm div}\left((\rho^{-1}_{q+1}-\rho^{-1}_l)m_l+(\rho^{-1}_{q+1}-\rho^{-1}_l)w_{q+1}+\rho^{-1}_lw_{q+1}\right)\\
&+{\rm div}(\rho_{q+1}^{-1}(m_l\otimes w_{q+1}+w_{q+1}\otimes m_l)+(\rho^{-1}_{q+1}-\rho^{-1}_l)m_l\otimes m_l\\
&\underbrace{\qquad\qquad\qquad\quad-\mu\left(b_l\otimes d_{q+1}+d_{q+1}\otimes b_l-(b_l\cdot d_{q+1})\mathbb{I}\right))\qquad\qquad\qquad\qquad}_{{\rm div}R^m_{{\rm lin}}}\\
&+{\rm div}\left(\rho_l^{-1}(w_{q+1}^{(p)}\otimes w_{q+1}^{(p)}-\mu\left(d_{q+1}^{(p)}\otimes d_{q+1}^{(p)}-\frac{1}{2}\left|d_{q+1}^{(p)}\right|^2\mathbb{I}\right)+R^m_l\right)+\partial_tw_{q+1}^{(o)}\\
&\underbrace{+{\rm div}\left((\rho_{q+1}^{-1}-\rho_l^{-1})w_{q+1}\otimes w_{q+1}\right)\qquad\qquad\qquad\qquad\qquad\qquad\qquad\qquad}_{{\rm div}R^m_{{\rm osc}}}\\
&+{\rm div}\left(\rho_l^{-1}\left(w_{q+1}^{(p)}\otimes (w_{q+1}^{(c)}+w_{q+1}^{(o)})+(w_{q+1}^{(c)}+w_{q+1}^{(o)})\otimes w_{q+1}\right)\right)\\
&-{\rm div}\left(\mu\left((d_{q+1}^{(c)}+d_{q+1}^{(o)})\otimes d_{q+1}+d_{q+1}^{(p)}\otimes (d_{q+1}^{(c)}+d_{q+1}^{(o)})\right)\right)\\
&\underbrace{+{\rm div}\left(\frac{\mu}{2}\left(\left(d_{q+1}^{(c)}+d_{q+1}^{(o)}\right)\cdot d_{q+1}+d_{q+1}^{(p)}\cdot\left(d_{q+1}^{(c)}+d_{q+1}^{(o)}\right)\right)\mathbb{I}\right)\qquad}_{{\rm div}R^m_{{\rm cor}}}\\
&+\underbrace{\nabla P(\rho_{q+1})-\nabla P(\rho_l)}_{R^m_{{\rm pre}}}+{\rm div}R^m_{{\rm com}},
    \end{aligned}
\end{equation}
where $R^m_{{\rm com}}$ is the commutator stress to the momentum equation given by \eqref{eq2.19}.

Then, using the inverse-divergence operator $\mathcal{R}^m$ we define the new Reynolds stress at level $q+1$ by
\begin{align}\label{eq4.12}
    R^m_{q+1}:=R^m_{{\rm lin}}+R^m_{{\rm osc}}+R^m_{{\rm cor}}+R^m_{{\rm pre}}+R^m_{{\rm com}},
\end{align}
where the linear error
\begin{equation}\label{eq4.13}
    \begin{aligned}
        R^m_{{\rm lin}}&:=
        \mathcal{R}^m\partial_t\left(w_{q+1}^{(p)}+w_{q+1}^{(c)}\right)\\
        &\quad+\nu^s\mathcal{R}^m(-\Delta)^{\alpha_1}\left((\rho^{-1}_{q+1}-\rho^{-1}_l)m_l+(\rho^{-1}_{q+1}-\rho^{-1}_l)w_{q+1}+\rho^{-1}_lw_{q+1}\right)\\
&\quad-(\nu^b+\frac{1}{3}\nu^s)\mathcal{R}^m\nabla{\rm div}\left((\rho^{-1}_{q+1}-\rho^{-1}_l)m_l+(\rho^{-1}_{q+1}-\rho^{-1}_l)w_{q+1}+\rho^{-1}_lw_{q+1}\right)\\
&\quad+\mathcal{R}^m{\rm div}\left(\rho_{q+1}^{-1}(m_l\otimes w_{q+1}+w_{q+1}\otimes m_l)+(\rho^{-1}_{q+1}-\rho^{-1}_l)m_l\otimes m_l\right)\\
&\quad-\mathcal{R}^m{\rm div}\left(\mu\left(b_l\otimes d_{q+1}+d_{q+1}\otimes b_l-(b_l\cdot d_{q+1})\mathbb{I}\right)\right)\\
&=:R^m_{{\rm lin},1}+R^m_{{\rm lin},2}+R^m_{{\rm lin},3}+R^m_{{\rm lin},4}+R^m_{{\rm lin},5},
    \end{aligned}
\end{equation}
the oscillation error
\begin{equation}\label{eq4.14}
    \begin{aligned}
        R^m_{{\rm osc}}&:=\sum_{k\in\Lambda_b}g^2_{(k)}\mathcal{R}^m\mathbb{P}_{\neq0}\left(\mathbb{P}_{\neq0}\left(W_{(k)}\otimes W_{(k)}\right)\nabla(\rho_l^{-1}a^2_{(k)})\right.\\
        &\qquad\left.-\mu\mathbb{P}_{\neq0}\left( D_{(k)}\otimes D_{(k)}-\frac{1}{2}|D_{(k)}|^2\mathbb{I}\right)\nabla(a^2_{(k)})\right)
        \\
        &\quad+\sum_{k\in\Lambda_m}\mathcal{R}^m\mathbb{P}_{\neq0}\left(g^2_{(k)}\mathbb{P}_{\neq0}(W_{(k)}\otimes W_{(k)})\nabla(\rho_l^{-1}a^2_{(k)}) \right)\\
        &\quad-\sigma^{-1}\sum_{k\in\Lambda_m\cup\Lambda_b}h_{(k)}\mathcal{R}^m\left(\left(k_1\cdot\nabla\right)\partial_t(\rho_l^{-1}a^2_{(k)})k_1\right)\\
        &\quad+\sigma^{-1}\sum_{k\in\Lambda_b}\mu h_{(k)}\left(\mathcal{R}^m\left(\left(k_2\cdot\nabla\right)\partial_t(a^2_{(k)})k_2\right)-\frac{1}{2}\partial_t\mathcal{R}^m\nabla(a^2_{(k)})\right)\\
        &\quad+\mathcal{R}^m{\rm div}\left((\rho_{q+1}^{-1}-\rho_l^{-1})w_{q+1}\otimes w_{q+1}\right)\\
        &=:  R^m_{{\rm osc},1}+R^m_{{\rm osc},2}+R^m_{{\rm osc},3}+R^m_{{\rm osc},4}+R^m_{{\rm osc},5},
    \end{aligned}
\end{equation}
the corrector error
\begin{equation}\label{eq4.15}
    \begin{aligned}
        R^m_{{\rm cor}}&:=
        \mathcal{R}^m{\rm div}\left(\rho_l^{-1}\left(w_{q+1}^{(p)}\otimes (w_{q+1}^{(c)}+w_{q+1}^{(o)})+(w_{q+1}^{(c)}+w_{q+1}^{(o)})\otimes w_{q+1}\right)\right)\\
&\quad-\mathcal{R}^m{\rm div}\left(\mu\left((d_{q+1}^{(c)}+d_{q+1}^{(o)})\otimes d_{q+1}+d_{q+1}^{(p)}\otimes (d_{q+1}^{(c)}+d_{q+1}^{(o)})\right)\right)\\
&\quad+\mathcal{R}^m{\rm div}\left(\frac{\mu}{2}\left(\left(d_{q+1}^{(c)}+d_{q+1}^{(o)}\right)\cdot d_{q+1}+d_{q+1}^{(p)}\cdot\left(d_{q+1}^{(c)}+d_{q+1}^{(o)}\right)\right)\mathbb{I}\right),
    \end{aligned}
\end{equation}
the pressure error 
\begin{equation}\label{eq4.16}
    R^m_{{\rm pre}}
:=\mathcal{R}^m\nabla\left(P(\rho_{q+1})-P(\rho_l)\right),
\end{equation}
and the commutator error
\begin{equation}\label{eq4.17}
   \begin{aligned}
        R^m_{{\rm com}}&:=
    \nu^s\mathcal{R}^m(-\Delta)^{\alpha_1}\left(\rho_l^{-1}m_l-((\rho_q^{-1}m_q)*_x\phi_l)*_t\varphi_l\right)+\mathcal{R}^m\nabla\left(P(\rho_l)-P_l\right)\\
        &\quad-(\nu^b+\frac{1}{3}\nu^s)\mathcal{R}^m\nabla{\rm div}\left(\rho_l^{-1}m_l-((\rho_q^{-1}m_q)*_x\phi_l)*_t\varphi_l\right)\\
        &\quad+\mathcal{R}^m{\rm div}\left(\rho_l^{-1}m_l\otimes m_l-\left(\rho_q^{-1}m_q\otimes m_q\right)*_x\phi_l*_t\varphi_l\right)\\
        &\quad-\mu\mathcal{R}^m{\rm div}\left(b_l\otimes b_l-\frac{1}{2}|b_l|^2\mathbb{I}-(b_q\otimes b_q-\frac{1}{2}|b_q|^2\mathbb{I})*_x\phi_l*_t\varphi_l\right)\\
        &=:R^m_{{\rm com},1}+R^m_{{\rm com},2}+R^m_{{\rm com},3}+R^m_{{\rm com},4}+R^m_{{\rm com},5}.
   \end{aligned}
\end{equation}

By \eqref{eq3.43}--\eqref{eq3.53} and the fact that ${\rm div}(W_{(k)}\otimes W_{(k)})=0$, we have
\begin{align}\label{eq4.18}
    R^m_{q+1}=\mathcal{R}^m{\rm div}R^m_{q+1}.
\end{align}
\subsection{Verification of $C^1_{t,x}$-estimates}
The purpose of this subsection is to verify the $C^1_{t,x}$-estimates \eqref{eq2.8} of $R^m_{q+1}$ and $R^b_{q+1}$. 

First, by \eqref{eq4.10}, \eqref{eq4.18}, the Sobolev embedding $W^{1,4}_x\hookrightarrow C_x$, equations $\eqref{eq2.1}_2$ and $\eqref{eq2.1}_3$ at level $q+1$ and the fact $\mathcal{R}^m,\mathcal{R}^b\sim|\nabla|^{-1}$ in Sobolev spaces $W^{s,p}$, we obtain
\begin{equation}\label{eq4.19}
    \begin{aligned}
        &\quad\|R^m_{q+1}\|_{C_tC^1_x}+\|R^b_{q+1}\|_{C_tC^1_x}\\
        &\lesssim\left\|\mathcal{R}^m\left({\rm div}R^m_{q+1}\right)\right\|_{C_tW^{2,4}_x}+\left\|\mathcal{R}^b\left({\rm div}R^b_{q+1}\right)\right\|_{C_tW^{2,4}_x}\\
        &\lesssim\|\partial_tm_{q+1}\|_{C_tW^{1,4}_x}+\|\partial_tb_{q+1}\|_{C_tW^{1,4}_x}+\|\nu^s(-\Delta)^{\alpha_1}(\rho_{q+1}^{-1}m_{q+1})\|_{C_tW^{1,4}_x}+\|\nabla P(\rho_{q+1})\|_{C_tW^{1,4}_x}\\
        &\quad+\|\eta\mu^{-1}(-\Delta)^{\alpha_2}b_{q+1}\|_{C_tW^{1,4}_x}+\left\|(\nu^b+\frac{1}{3}\nu^s)\nabla{\rm div}(\rho_{q+1}^{-1}m_{q+1})\right\|_{C_tW^{1,4}_x}\\
        &\quad+\left\|{\rm div}\left(\rho_{q+1}^{-1}(b_{q+1}\otimes m_{q+1}-m_{q+1}\otimes b_{q+1})\right)\right\|_{C_tW^{1,4}_x}\\
        &\quad+\left\|{\rm div}\left(\rho_{q+1}^{-1}m_{q+1}\otimes m_{q+1}-\mu(b_{q+1}\otimes b_{q+1}-\frac{1}{2}|b_{q+1}|^2\mathbb{I})\right)\right\|_{C_tW^{1,4}_x}.
    \end{aligned}
\end{equation}
Note that, by interpolation and \eqref{eq2.5} at level $q+1$, we have
\begin{equation}\label{eq4.20}
    \begin{aligned}
      \|\nu^s(-\Delta)^{\alpha_1}(\rho_{q+1}^{-1}m_{q+1})\|_{C_tW^{1,4}_x}
        &\lesssim\|\rho_{q+1}^{-1}m_{q+1}\|^{1-\frac{2\alpha_1+1}{4}}_{C_tC_x}\|\rho_{q+1}^{-1}m_{q+1}\|^{\frac{2\alpha_1+1}{4}}_{C_tW_x^{4,\infty}}\\
        &\lesssim\|m_{q+1}\|_{C_{t,x}}^{\frac{3-2\alpha_1}{4}}\left(\|\rho_{q+1}^{-1}\|_{C_tC_x^4}\|m_{q+1}\|_{C_{t,x}^4}\right)^{\frac{2\alpha_1+1}{4}}
    \end{aligned}
\end{equation}
and
\begin{equation}\label{eq4.21} 
        \|\eta\mu^{-1}(-\Delta)^{\alpha_2}b_{q+1}\|_{C_tW^{1,4}_x}\lesssim\|b_{q+1}\|_{C_{t,x}}^{\frac{3-2\alpha_2}{4}}\|b_{q+1}\|_{C_{t,x}^4}^{\frac{2\alpha_2+1}{4}}.
\end{equation}
Moreover,
\begin{equation}\label{eq4.22}
        \left\|(\nu^b+\frac{1}{3}\nu^s)\nabla{\rm div}(\rho_{q+1}^{-1}m_{q+1})\right\|_{C_tW^{1,4}_x}\lesssim\|\rho_{q+1}^{-1}m_{q+1}\|_{C_tC_x^3}\lesssim\|\rho_{q+1}^{-1}\|_{C_tC_x^3}\|m_{q+1}\|_{C_tC^3_x},
\end{equation}
Furthermore, by Leibniz rule, we get
\begin{equation}\label{eq4.23}
    \begin{aligned}
        &\quad\left\|{\rm div}\left(\rho_{q+1}^{-1}(b_{q+1}\otimes m_{q+1}-m_{q+1}\otimes b_{q+1})\right)\right\|_{C_tW^{1,4}_x}\\
        &\lesssim\|\rho_{q+1}^{-1}(b_{q+1}\otimes m_{q+1}-m_{q+1}\otimes b_{q+1})\|_{C_tC^2_x}\\      &\lesssim\|\rho_{q+1}^{-1}\|_{C_tC^2_x}\sum_{N_1+N_2=2}\|m_{q+1}\|_{C_tC^{N_1}_x}\|b_{q+1}\|_{C_tC^{N_2}_x},
    \end{aligned}
\end{equation}
and
\begin{equation}\label{eq4.24}
    \begin{aligned}
        &\quad\left\|{\rm div}\left(\rho_{q+1}^{-1}m_{q+1}\otimes m_{q+1}-\mu(b_{q+1}\otimes b_{q+1}-\frac{1}{2}|b_{q+1}|^2\mathbb{I})\right)\right\|_{C_tW^{1,4}_x}\\
        &\lesssim\left\|\rho_{q+1}^{-1}m_{q+1}\otimes m_{q+1}-\mu(b_{q+1}\otimes b_{q+1}-\frac{1}{2}|b_{q+1}|^2\mathbb{I})\right\|_{C_tC^2_x}\\     &\lesssim\|\rho_{q+1}^{-1}\|_{C_tC^2_x}\sum_{N_1+N_2=2}\left(\|m_{q+1}\|_{C_tC^{N_1}_x}\|m_{q+1}\|_{C_tC^{N_2}_x}+\|b_{q+1}\|_{C_tC^{N_1}_x}\|b_{q+1}\|_{C_tC^{N_2}_x}\right).
    \end{aligned}
\end{equation}
Then, plugging \eqref{eq4.20}--\eqref{eq4.24} into \eqref{eq4.19} yields that
\begin{equation}\label{eq4.25}
    \begin{aligned}
        &\quad\|R^m_{q+1}\|_{C_tC^1_x}+\|R^b_{q+1}\|_{C_tC^1_x}\\
        &\lesssim\|\partial_tm_{q+1}\|_{C_tW^{1,4}_x}+\|\partial_tb_{q+1}\|_{C_tW^{1,4}_x}+\|\nabla P(\rho_{q+1})\|_{C_tW^{1,4}_x}\\
        &\quad+\|m_{q+1}\|_{C_{t,x}}^{\frac{3-2\alpha_1}{4}}\left(\|\rho_{q+1}^{-1}\|_{C_tC_x^4}\|m_{q+1}\|_{C_{t,x}^4}\right)^{\frac{2\alpha_1+1}{4}}+\|b_{q+1}\|_{C_{t,x}}^{\frac{3-2\alpha_2}{4}}\|b_{q+1}\|_{C_{t,x}^4}^{\frac{2\alpha_2+1}{4}}\\
        &\quad+\|\rho_{q+1}^{-1}\|_{C_tC^2_x}\sum_{N_1+N_2=2}\|m_{q+1}\|_{C_tC^{N_1}_x}\|b_{q+1}\|_{C_tC^{N_2}_x}\\
        &\quad+\|\rho_{q+1}^{-1}\|_{C_tC^2_x}\sum_{N_1+N_2=2}\left(\|m_{q+1}\|_{C_tC^{N_1}_x}\|m_{q+1}\|_{C_tC^{N_2}_x}+\|b_{q+1}\|_{C_tC^{N_1}_x}\|b_{q+1}\|_{C_tC^{N_2}_x}\right).
    \end{aligned}
\end{equation}

Using \eqref{eq2.5} and \eqref{eq2.6} at level $q+1$, we obtain
\begin{equation}\label{eq4.26}
    \|\partial_t^M\rho_{q+1}^{-1}\|_{C_tC^N_x}\lesssim\lambda_{q+1}^{\frac{N\varepsilon}{4}}.
\end{equation}
Moreover, since $P^{\prime}$ and $P^{\prime\prime}$ are continuous and $\rho_{q+1}$ is uniformly bounded away from zero and infinity, that is, by Leibniz rule again, we get
\begin{equation}\label{eq4.27}
    \|\nabla P(\rho_{q+1})\|_{C_tC^1_x}\lesssim\|P^{\prime}(\rho_{q+1})\|_{C_{t,x}}\|\rho_{q+1}\|_{C_tC^2_x}+\|P^{\prime\prime}(\rho_{q+1})\|_{C_{t,x}}\|\rho_{q+1}\|^2_{C_tC^1_x}\lesssim\lambda_{q+1}^{\varepsilon}.
\end{equation}
Thus, combining with \eqref{eq2.2s}, \eqref{eq4.25}--\eqref{eq4.27} and \eqref{eq2.7} at level $q+1$, it yields
\begin{equation}\label{eq4.28}
\|R^m_{q+1}\|_{C_tC^1_x}+\|R^b_{q+1}\|_{C_tC^1_x}\lesssim\lambda_{q+1}^6+\lambda_{q+1}^{8+\frac{\varepsilon}{2}}+\lambda_{q+1}^{4+5\alpha_1}+\lambda_{q+1}^{4+5\alpha_2}\lesssim\lambda_{q+1}^9.
\end{equation}

Now we turn to the $C^1_tC_x$-estimates. Similarly, we deduce
\begin{equation}\label{eq4.29}
    \begin{aligned}
        &\quad\|\partial_tR^m_{q+1}\|_{C_{t,x}}+\|\partial_tR^b_{q+1}\|_{C_{t,x}}\\
        &=\left\|\partial_t\mathcal{R}^m\left({\rm div}R^m_{q+1}\right)\right\|_{C_{t,x}}+\left\|\partial_t\mathcal{R}^b\left({\rm div}R^b_{q+1}\right)\right\|_{C_{t,x}}\\      
        &\lesssim\left\|\partial_t\mathcal{R}^m\left({\rm div}R^m_{q+1}\right)\right\|_{C_{t}W^{1,4}_x}+\left\|\partial_t\mathcal{R}^b\left({\rm div}R^b_{q+1}\right)\right\|_{C_{t}W^{1,4}_x}\\
        &\lesssim\|\partial_t{\rm div}R^m_{q+1}\|_{C_{t}L^4_x}+\|\partial_t{\rm div}R^b_{q+1}\|_{C_{t}L^4_x}\\ 
        &\lesssim\|\partial_t{\rm div}R^m_{q+1}\|_{C_{t}L^{\infty}_x}+\|\partial_t{\rm div}R^b_{q+1}\|_{C_{t}L^{\infty}_x}\\ &\lesssim\|\partial^2_tm_{q+1}\|_{C_tL^{\infty}_x}+\|\partial^2_tb_{q+1}\|_{C_tL^{\infty}_x}\\
        &\quad+\|\nu^s(-\Delta)^{\alpha_1}\partial_t(\rho_{q+1}^{-1}m_{q+1})\|_{C_tL^{\infty}_x}+\|\eta\mu^{-1}(-\Delta)^{\alpha_2}\partial_tb_{q+1}\|_{C_tL^{\infty}_x}\\
        &\quad+\left\|(\nu^b+\frac{1}{3}\nu^s)\nabla{\rm div}\partial_t(\rho_{q+1}^{-1}m_{q+1})\right\|_{C_tL^{\infty}_x}+\left\|{\rm div}\partial_t\left(\rho_{q+1}^{-1}(b_{q+1}\otimes m_{q+1}-m_{q+1}\otimes b_{q+1})\right)\right\|_{C_tL^{\infty}_x}\\
        &\quad+\left\|{\rm div}\partial_t\left(\rho_{q+1}^{-1}m_{q+1}\otimes m_{q+1}-\mu(b_{q+1}\otimes b_{q+1}-\frac{1}{2}|b_{q+1}|^2\mathbb{I})\right)\right\|_{C_tL^{\infty}_x}+\|\nabla\partial_t P(\rho_{q+1})\|_{C_tL^{\infty}_x}\\
        &\lesssim\|m_{q+1}\|_{C^2_{t,x}}+\|b_{q+1}\|_{C^2_{t,x}}+\|(-\Delta)^{\alpha_1}(\partial_t\rho_{q+1}^{-1}m_{q+1})\|_{C_{t,x}}+\|(-\Delta)^{\alpha_1}(\rho_{q+1}^{-1}\partial_tm_{q+1})\|_{C_{t,x}}\\
        &\quad+\|(-\Delta)^{\alpha_2}\partial_tb_{q+1}\|_{C_{t,x}}+\sum_{M_1+M_2=1}\|\partial_t^{M_1}\rho_{q+1}^{-1}\|_{C_tC^2_x}\|\partial_t^{M_2}m_{q+1}\|_{C_tC^2_x}\\
        &\quad+\|\rho_{q+1}^{-1}\|_{C_tC^2_x}\sum_{N_1+N_2=2}\|m_{q+1}\|_{C_tC^{N_1}_x}\|b_{q+1}\|_{C_tC^{N_2}_x}\\
        &\quad+\|\rho_{q+1}^{-1}\|_{C_tC^2_x}\sum_{N_1+N_2=2}\left(\|m_{q+1}\|_{C_tC^{N_1}_x}\|m_{q+1}\|_{C_tC^{N_2}_x}+\|b_{q+1}\|_{C_tC^{N_1}_x}\|b_{q+1}\|_{C_tC^{N_2}_x}\right)\\
        &\quad+\|P^{\prime}(\rho_{q+1})\|_{C_{t,x}}\|\partial_t\nabla\rho_{q+1}\|_{C_{t,x}}+\|P^{\prime\prime}(\rho_{q+1})\|_{C_{t,x}}\|\nabla\rho_{q+1}\|_{C_{t,x}}\|\partial_t\rho_{q+1}\|_{C_{t,x}}.
    \end{aligned}
\end{equation}
It remains to estimate the hypo-dissipative terms, that is, using interpolation, \eqref{eq2.7} at level $q+1$ and \eqref{eq4.26}, we get
\begin{equation}\label{eq4.30}
    \begin{aligned}
        &\quad\|(-\Delta)^{\alpha_1}(\partial_t\rho_{q+1}^{-1}m_{q+1})\|_{C_{t,x}}+\|(-\Delta)^{\alpha_1}(\rho_{q+1}^{-1}\partial_tm_{q+1})\|_{C_{t,x}}+\|(-\Delta)^{\alpha_2}\partial_tb_{q+1}\|_{C_{t,x}}\\
        &\lesssim\left(\|\partial_t\rho_{q+1}^{-1}\|_{C_{t,x}}\|m_{q+1}\|_{C_{t,x}}\right)^{1-\alpha_1}\left(\|\partial_t\rho_{q+1}^{-1}\|_{C_tC^2_x}\|m_{q+1}\|_{C_tC^2_x}\right)^{\alpha_1}\\
        &\quad+\left(\|\rho_{q+1}^{-1}\|_{C_{t,x}}\|\partial_tm_{q+1}\|_{C_{t,x}}\right)^{1-\alpha_1}\left(\|\rho_{q+1}^{-1}\|_{C_tC^2_x}\|\partial_tm_{q+1}\|_{C_tC^2_x}\right)^{\alpha_1}\\
        &\quad+\|b_{q+1}\|_{C^1_tC_x}^{1-\alpha_2}\|b_{q+1}\|_{C^1_tC^2_x}^{\alpha_2}\\
        &\lesssim\left(\lambda_{q+1}^{2+\varepsilon}\right)^{1-\alpha_1}\left(\lambda_{q+1}^{6+\varepsilon}\right)^{\alpha_1}+\left(\lambda_{q+1}^{4+\varepsilon}\right)^{1-\alpha_1}\left(\lambda_{q+1}^{8+\varepsilon}\right)^{\alpha_1}+\lambda_{q+1}^{4-4\alpha_2}\lambda_{q+1}^{8\alpha_2}\\
        &\lesssim\lambda_{q+1}^9.
    \end{aligned}
\end{equation}
Thus, taking into account \eqref{eq2.6} and \eqref{eq2.7} at level $q+1$, \eqref{eq4.26}, \eqref{eq4.29} and \eqref{eq4.30}, we deduce that
\begin{equation}\label{eq4.31}
    \begin{aligned}
    \|\partial_tR^m_{q+1}\|_{C_{t,x}}+\|\partial_tR^b_{q+1}\|_{C_{t,x}}
    &\lesssim\lambda_{q+1}^6+\lambda_{q+1}^9+\sum_{M_1+M_2=1}\lambda_{q+1}^{\frac{\varepsilon}{2}}\lambda_{q+1}^{6+2M_2}\\
    &\quad+\lambda_{q+1}^{\frac{\varepsilon}{2}}\sum_{N_1+N_2=2}\lambda_{q+1}^{2N_1+2}\lambda_{q+1}^{2N_2+2}+\lambda_{q+1}^{\varepsilon}\\
    &\lesssim\lambda_{q+1}^9,
    \end{aligned}
\end{equation}
where we used \eqref{eq2.2s} in the last step. 

Therefore, we have shown that the $C^1_{t,x}$-estimates \eqref{eq2.8} is valid at level $q+1$.
\subsection{Verification of $L^1_tC_x$-decay}
We are now in the stage to verify the $L^1_tC_x$-decay \eqref{eq2.9} at level $q+1$. Since Calder\'on-Zygmund operators are bounded in H\"older spaces $C^{\varepsilon}_x$, where $\varepsilon\in(0,1)$ can be chosen as in \eqref{eq2.2s}, we will estimate the new Reynolds and magnetic stresses in the space $L^1_tC^{\varepsilon}_x$ rather than $L^1_tC_x$.
\subsubsection{Linear errors $R^m_{{\rm lin}}$ and $R^b_{{\rm lin}}$}
To begin with, concerning the temporal derivative terms $R^m_{{\rm lin},1}$ and $R^b_{{\rm lin},1}$ in \eqref{eq4.6} and \eqref{eq4.13}, by Lemmas \ref{lem-mikado-flows}--\ref{lem-momentum-amplitude-estimate} and interpolation, we have
\begin{equation}\label{eq4.32}
    \begin{aligned}
    \|R^m_{{\rm lin},1}\|_{L^1_tC^{\varepsilon}_x}
& \lesssim \sum_{k \in \Lambda_m\cup\Lambda_b}\left\|\mathcal{R}^m {\rm curl} {\rm curl}\partial_t\left(g_{(k)} a_{(k)} W_{(k)}^c\right)\right\|_{L_t^1 C_x^{\varepsilon}} \\
& \lesssim \sum_{k \in \Lambda_m\cup\Lambda_b}\left(\left\|g_{(k)}\right\|_{L_t^1}\left\|a_{(k)}\right\|_{C_{t, x}^2}+\left\|\partial_t g_{(k)}\right\|_{L_t^1}\left\|a_{(k)}\right\|_{C_{t, x}^1}\right)\left\|\nabla W_{(k)}^c\right\|_{C_x^{\varepsilon}} \\
& \lesssim \sum_{k \in \Lambda_m\cup\Lambda_b}\left(\left\|g_{(k)}\right\|_{L_t^1}\left\|a_{(k)}\right\|_{C_{t, x}^2}+\left\|\partial_t g_{(k)}\right\|_{L_t^1}\left\|a_{(k)}\right\|_{C_{t, x}^1}\right)\left\|\nabla W_{(k)}^c\right\|_{C_x}^{1-\varepsilon}\left\|\nabla W_{(k)}^c\right\|_{C_x^1}^{\varepsilon}\\
&\lesssim\left(l^{-27}\tau^{-\frac{1}{2}}+l^{-17}\sigma\tau^{\frac{1}{2}}\right)\lambda_{q+1}^{\varepsilon-1}\\
&\lesssim l^{-27}\left(\lambda_{q+1}^{\varepsilon-\alpha-1}+\lambda_{q+1}^{11\varepsilon+\alpha-1}\right)\\
&\lesssim l^{-27}\lambda_{q+1}^{-9\varepsilon},
\end{aligned}
\end{equation}
and
\begin{equation}\label{eq4.33}
    \begin{aligned}
    \|R^b_{{\rm lin},1}\|_{L^1_tC^{\varepsilon}_x}
& \lesssim \sum_{k \in \Lambda_b}\left\|\mathcal{R}^b {\rm curl} {\rm curl}\partial_t\left(g_{(k)} a_{(k)} D_{(k)}^c\right)\right\|_{L_t^1 C_x^{\varepsilon}} \\
& \lesssim \sum_{k \in \Lambda_b}\left(\left\|g_{(k)}\right\|_{L_t^1}\left\|a_{(k)}\right\|_{C_{t, x}^2}+\left\|\partial_t g_{(k)}\right\|_{L_t^1}\left\|a_{(k)}\right\|_{C_{t, x}^1}\right)\left\|\nabla D_{(k)}^c\right\|_{C_x^{\varepsilon}} \\
& \lesssim \sum_{k \in \Lambda_b}\left(\left\|g_{(k)}\right\|_{L_t^1}\left\|a_{(k)}\right\|_{C_{t, x}^2}+\left\|\partial_t g_{(k)}\right\|_{L_t^1}\left\|a_{(k)}\right\|_{C_{t, x}^1}\right)\left\|\nabla D_{(k)}^c\right\|_{C_x}^{1-\varepsilon}\left\|\nabla D_{(k)}^c\right\|_{C_x^1}^{\varepsilon}\\
&\lesssim\left(l^{-13}\tau^{-\frac{1}{2}}+l^{-8}\sigma\tau^{\frac{1}{2}}\right)\lambda_{q+1}^{\varepsilon-1}\\
&\lesssim l^{-13}\left(\lambda_{q+1}^{\varepsilon-\alpha-1}+\lambda_{q+1}^{11\varepsilon+\alpha-1}\right)\\
&\lesssim l^{-13}\lambda_{q+1}^{-9\varepsilon},
\end{aligned}
\end{equation}
where we also used \eqref{eq2.2s} in the last inequality of both \eqref{eq4.32} and \eqref{eq4.33}.

Regarding the hypo-dissipative terms $R^m_{{\rm lin},2}$ and $R^b_{{\rm lin},2}$ in \eqref{eq4.6} and \eqref{eq4.13}, we estimate the terms involving $\alpha_i$ separately. If $\alpha_i\leq\frac12$, we use the smoothing estimate as above; if $\alpha_i>\frac12$, we use the interpolation estimate involving $|\nabla|^{2\alpha_i-1}$.

On one hand, for the case of $\alpha_1,\alpha_2\in(0,\frac{1}{2}]$, by a direct calculation,
\begin{equation}\label{eq4.34}
    \begin{aligned}
\left\|R^m_{{\rm lin}, 2}\right\|_{L_t^1 C_x^{\varepsilon}} &\lesssim \left\|\rho_l^{-1} w_{q+1}+\left(\rho_{q+1}^{-1}-\rho_l^{-1}\right) m_{\ell}+\left(\rho_{q+1}^{-1}-\rho_l^{-1}\right) w_{q+1}\right\|_{L_t^1 C_x^{\varepsilon}} \\
&\lesssim \left\|\rho_l^{-1}\right\|_{C_t C_x^{\varepsilon}}\left\|w_{q+1}\right\|_{L_t^1 C_x}+\left\|\rho_l^{-1}\right\|_{C_{t, x}}\left\|w_{q+1}\right\|_{L_t^1 C_x^{\varepsilon}} \\
& \quad+\left\|\left(\rho_{q+1}^{-1}-\rho_l^{-1}\right)\right\|_{C_t C_x^{\varepsilon}}\left(\left\|m_l\right\|_{L_t^1 C_x^{\varepsilon}}+\left\|w_{q+1}\right\|_{L_t^1 C_x^{\varepsilon}}\right) ,
\end{aligned}
\end{equation}
and
\begin{equation}\label{eq4.35}
\left\|R^b_{{\rm lin}, 2}\right\|_{L_t^1 C_x^{\varepsilon}}\lesssim\|d_{q+1}\|_{L^1_tC_x^{\varepsilon}}.
\end{equation}
Note that, by\eqref{eq3.54} and \eqref{eq3.55},
\begin{equation}\label{eq4.36}
    \begin{aligned}       &\quad\|w_{q+1}\|_{L^1_tC_x^{\varepsilon}}+\|d_{q+1}\|_{L^1_tC_x^{\varepsilon}}\\
    &\lesssim\|w^{(p)}_{q+1}\|_{L^1_tC_x^{\varepsilon}}+\|w^{(c)}_{q+1}\|_{L^1_tC_x^{\varepsilon}}+\|w^{(o)}_{q+1}\|_{L^1_tC_x^{\varepsilon}}\\
   &\quad+\|d^{(p)}_{q+1}\|_{L^1_tC_x^{\varepsilon}}+\|d^{(c)}_{q+1}\|_{L^1_tC_x^{\varepsilon}}+\|d^{(o)}_{q+1}\|_{L^1_tC_x^{\varepsilon}}.\\ 
    \end{aligned}
\end{equation}
Furthermore, by \eqref{eq2.2s}, Proposition \ref{prop-estimate-perturbation} and interpolation,
\begin{equation}\label{eq4.37}
    \begin{aligned}      \|w^{(p)}_{q+1}\|_{L^1_tC_x^{\varepsilon}}+\|d^{(p)}_{q+1}\|_{L^1_tC_x^{\varepsilon}}
        &\lesssim\|w^{(p)}_{q+1}\|^{1-\varepsilon}_{L^1_tC_x}\|w^{(p)}_{q+1}\|^{\varepsilon}_{L^1_tC_x^1}+\|d^{(p)}_{q+1}\|^{1-\varepsilon}_{L^1_tC_x}\|d^{(p)}_{q+1}\|_{L^1_tC_x^1}^{\varepsilon}\\
        &\lesssim l^{-5}\lambda_{q+1}^{\varepsilon}\tau^{-\frac{1}{2}}\\
        &\lesssim l^{-5}\lambda_{q+1}^{-14\varepsilon}.
    \end{aligned}
\end{equation}
Similarly, we get
\begin{align}  &\|w^{(c)}_{q+1}\|_{L^1_tC_x^{\varepsilon}}+\|d^{(c)}_{q+1}\|_{L^1_tC_x^{\varepsilon}}\lesssim l^{-15}\lambda_{q+1}^{\varepsilon-1}\tau^{-\frac{1}{2}}\lesssim l^{-15}\lambda_{q+1}^{-1-14\varepsilon},\label{eq4.38}\\
&\|w^{(o)}_{q+1}\|_{L^1_tC_x^{\varepsilon}}+\|d^{(o)}_{q+1}\|_{L^1_tC_x^{\varepsilon}}\lesssim l^{-18-10\varepsilon}\sigma^{-1}\lesssim l^{-28}\lambda_{q+1}^{-15\varepsilon}.\label{eq4.39}
\end{align}
By combining with \eqref{eq4.36}--\eqref{eq4.39}, we obtain
\begin{equation}\label{eq4.40}  \|w_{q+1}\|_{L^1_tC_x^{\varepsilon}}+\|d_{q+1}\|_{L^1_tC_x^{\varepsilon}}\lesssim l^{-5}\lambda_{q+1}^{-14\varepsilon}.
\end{equation}

Concerning the density, by \eqref{eq2.22}, \eqref{eq3.58} \eqref{eq3.64} and \eqref{eq4.26}, 
\begin{equation}\label{eq4.41}
    \begin{aligned}
        \|\rho_{q+1}^{-1}-\rho_l^{-1}\|_{C_tC^N_x}
        &\lesssim\|\rho_{q+1}^{-1}\|_{C_tC^N_x}\|\rho_l^{-1}\|_{C_tC^N_x}\|z_{q+1}\|_{C_tC^N_x}\\
        &\lesssim\lambda_{q+1}^{\frac{N\varepsilon}{4}}\lambda_q^{\frac{N\varepsilon}{4}}l^{-10N-28}\sigma^{-1}\\
        &\lesssim l^{-10N-29}\lambda_{q+1}^{-14\varepsilon}.
    \end{aligned}
\end{equation}
Furthermore, by interpolation, we obtain
\begin{align}   &\|\rho_l^{-1}\|_{C_tC^{\varepsilon}_x}\lesssim\|\rho_l^{-1}\|_{C_{t,x}}^{1-\varepsilon}\|\rho_l^{-1}\|_{C_tC_x^1}^{\varepsilon}\lesssim\lambda_q^{\frac{\varepsilon^2}{4}},\label{eq4.42}\\
&\|\rho_{q+1}^{-1}\|_{C_tC^{\varepsilon}_x}\lesssim\|\rho_{q+1}^{-1}\|_{C_{t,x}}^{1-\varepsilon}\|\rho_{q+1}^{-1}\|_{C_tC_x^1}^{\varepsilon}\lesssim\lambda_{q+1}^{\frac{\varepsilon^2}{4}},\label{eq4.43}\\
&\|z_{q+1}\|_{C_tC^{\varepsilon}_x}\lesssim\|z_{q+1}\|_{C_{t,x}}^{1-\varepsilon}\|z_{q+1}\|_{C_tC_x^1}^{\varepsilon}\lesssim l^{-28-10\varepsilon}\sigma^{-1}\lesssim l^{-29}\lambda_{q+1}^{-15\varepsilon},\label{eq4.44}
\end{align}
and the mollification of momentum an magnetic field
\begin{equation}\label{eq4.45}
    \|m_l\|_{C_tC^{\varepsilon}_x}+\|b_l\|_{C_tC^{\varepsilon}_x}\lesssim\|m_l\|_{C_{t,x}}^{1-\varepsilon}\|m_l\|_{C_tC_x^1}^{\varepsilon}+\|b_l\|_{C_{t,x}}^{1-\varepsilon}\|b_l\|_{C_tC_x^1}^{\varepsilon}\lesssim \lambda_q^{4}.
\end{equation}
And then, using \eqref{eq4.41}--\eqref{eq4.44}, by interpolation again,
\begin{equation}\label{eq4.46}
    \begin{aligned}
        \|\rho_{q+1}^{-1}-\rho_l^{-1}\|_{C_tC^{\varepsilon}_x}      &\lesssim\|\rho_{q+1}^{-1}\|_{C_tC^{\varepsilon}_x}\|\rho_l^{-1}\|_{C_tC^{\varepsilon}_x}\|z_{q+1}\|_{C_tC^{\varepsilon}_x}\\
        &\lesssim\lambda_{q+1}^{\frac{\varepsilon^2}{4}}\lambda_q^{\frac{\varepsilon^2}{4}}l^{-29}\sigma^{-1}\\
        &\lesssim l^{-29}\lambda_{q+1}^{-14\varepsilon}.
    \end{aligned}
\end{equation}

Hence, substituting \eqref{eq4.40} and \eqref{eq4.42}--\eqref{eq4.46} into \eqref{eq4.34} and \eqref{eq4.35}, using Proposition \ref{prop-estimate-perturbation}, we have
\begin{equation}\label{eq4.47}
    \begin{aligned}
        \left\|R^m_{{\rm lin}, 2}\right\|_{L_t^1 C_x^{\varepsilon}}+\left\|R^b_{{\rm lin}, 2}\right\|_{L_t^1 C_x^{\varepsilon}}
        &\lesssim\lambda_q^{\frac{\varepsilon^2}{4}}l^{-5}\tau^{-\frac{1}{2}}+l^{-5}\lambda_{q+1}^{-14\varepsilon}+l^{-29}\lambda_{q+1}^{-14\varepsilon}\left(\lambda_q^{4}+l^{-5}\lambda_{q+1}^{-14\varepsilon}\right)\\
        &\lesssim l^{-34}\lambda_{q+1}^{-14\varepsilon},
    \end{aligned}
\end{equation}
where the last step is due to $\alpha\geq20\varepsilon$.

On the other hand, regarding the case where $\alpha_1,\alpha_2\in(\frac{1}{2},1)$, we first deduce
\begin{equation}\label{eq4.48}
    \begin{aligned}
        \left\|R^m_{{\rm lin},2}\right\|_{L^1_tC^{\varepsilon}_x}
        &\lesssim\left\||\nabla|^{2\alpha_1-1}(\rho_l^{-1}w_{q+1})\right\|_{L^1_tC^{\varepsilon}_x}+\left\||\nabla|^{2\alpha_1-1}((\rho_{q+1}^{-1}-\rho_l^{-1})m_l)\right\|_{L^1_tC^{\varepsilon}_x}\\
        &\quad+\left\||\nabla|^{2\alpha_1-1}((\rho_{q+1}^{-1}-\rho_l^{-1})w_{q+1})\right\|_{L^1_tC^{\varepsilon}_x},
    \end{aligned}
\end{equation}
and
\begin{equation}\label{eq4.49} \left\|R^b_{{\rm lin},2}\right\|_{L^1_tC^{\varepsilon}_x}\lesssim\left\||\nabla|^{2\alpha_2-1}d_{q+1}\right\|_{L^1_tC^{\varepsilon}_x}.
\end{equation}
Next, by interpolation, \eqref{eq2.2s}, \eqref{eq2.22} and Proposition \ref{prop-estimate-perturbation}, we derive
\begin{equation}\label{eq4.50}
    \begin{aligned}
        &\quad\left\||\nabla|^{2\alpha_1-1}(\rho_l^{-1}w^{(p)}_{q+1})\right\|_{L^1_tC^{\varepsilon}_x}\\
        &\lesssim \left\|\rho_l^{-1}w_{q+1}^{(p)}\right\|^{1-\frac{2\alpha_1-1+\varepsilon}{2}}_{L^1_tC_x}\left\|\rho_l^{-1}w_{q+1}^{(p)}\right\|^{\frac{2\alpha_1-1+\varepsilon}{2}}_{L^1_tC^2_x}\\
        &\lesssim\left(\left\|\rho_l^{-1}\right\|_{C_{t,x}}\left\|w_{q+1}^{(p)}\right\|_{L^1_tC_x}\right)^{1-\frac{2\alpha_1-1+\varepsilon}{2}}\left(\left\|\rho_l^{-1}\right\|_{C_tC_x^2}\left\|w_{q+1}^{(p)}\right\|_{L^1_tC^2_x}\right)^{\frac{2\alpha_1-1+\varepsilon}{2}}\\
        &\lesssim\left(l^{-5}\tau^{-\frac{1}{2}}\right)^{1-\frac{2\alpha_1-1+\varepsilon}{2}}\left(\lambda_q^{\frac{\varepsilon}{2}}l^{-5}\lambda_{q+1}^2\tau^{-\frac{1}{2}}\right)^{\frac{2\alpha_1-1+\varepsilon}{2}}\\
        &\lesssim l^{-6}\tau^{-\frac{1}{2}}\lambda_{q+1}^{2\alpha_1-1+\varepsilon}\\
        &\lesssim l^{-6}\lambda_{q+1}^{-14\varepsilon},
    \end{aligned}
\end{equation}
and
\begin{equation}\label{eq4.51}
    \begin{aligned}
 \left\||\nabla|^{2\alpha_2-1}d^{(p)}_{q+1}\right\|_{L^1_tC^{\varepsilon}_x}
        &\lesssim \left\|d_{q+1}^{(p)}\right\|^{1-\frac{2\alpha_2-1+\varepsilon}{2}}_{L^1_tC_x}\left\|d_{q+1}^{(p)}\right\|^{\frac{2\alpha_2-1+\varepsilon}{2}}_{L^1_tC^2_x}\\
        &\lesssim\left(l^{-5}\tau^{-\frac{1}{2}}\right)^{1-\frac{2\alpha_2-1+\varepsilon}{2}}\left(l^{-5}\lambda_{q+1}^2\tau^{-\frac{1}{2}}\right)^{\frac{2\alpha_2-1+\varepsilon}{2}}\\
        &\lesssim l^{-5}\tau^{-\frac{1}{2}}\lambda_{q+1}^{2\alpha_2-1+\varepsilon}\\
        &\lesssim l^{-5}\lambda_{q+1}^{-14\varepsilon}.
    \end{aligned}
\end{equation}
Similarly, we also obtain
\begin{align}
    &\left\||\nabla|^{2\alpha_1-1}(\rho_l^{-1}w^{(c)}_{q+1})\right\|_{L^1_tC^{\varepsilon}_x}\lesssim\left(l^{-15}\lambda_{q+1}^{-1}\tau^{-\frac{1}{2}}\right)^{1-\frac{2\alpha_1-1+\varepsilon}{2}}\left(\lambda_q^{\frac{\varepsilon}{2}}l^{-15}\lambda_{q+1}\tau^{-\frac{1}{2}}\right)^{\frac{2\alpha_1-1+\varepsilon}{2}}\lesssim l^{-16}\lambda_{q+1}^{-1},\label{eq4.52}\\
    &\left\||\nabla|^{2\alpha_1-1}(\rho_l^{-1}w^{(o)}_{q+1})\right\|_{L^1_tC^{\varepsilon}_x}\lesssim\left(l^{-18}\sigma^{-1}\right)^{1-\frac{2\alpha_1-1+\varepsilon}{2}}\left(\lambda_q^{\frac{\varepsilon}{2}}l^{-38}\sigma^{-1}\right)^{\frac{2\alpha_1-1+\varepsilon}{2}}\lesssim l^{-38}\sigma^{-1},\label{eq4.53}\\
    &\left\||\nabla|^{2\alpha_2-1}d^{(c)}_{q+1}\right\|_{L^1_tC^{\varepsilon}_x}\lesssim\left(l^{-15}\lambda_{q+1}^{-1}\tau^{-\frac{1}{2}}\right)^{1-\frac{2\alpha_2-1+\varepsilon}{2}}\left(l^{-15}\lambda_{q+1}\tau^{-\frac{1}{2}}\right)^{\frac{2\alpha_2-1+\varepsilon}{2}}\lesssim l^{-15}\lambda_{q+1}^{-1},\label{eq4.54}\\
    &\left\||\nabla|^{2\alpha_2-1}d^{(o)}_{q+1}\right\|_{L^1_tC^{\varepsilon}_x}\lesssim\left(l^{-18}\sigma^{-1}\right)^{1-\frac{2\alpha_2-1+\varepsilon}{2}}\left(l^{-38}\sigma^{-1}\right)^{\frac{2\alpha_2-1+\varepsilon}{2}}\lesssim l^{-38}\sigma^{-1}.\label{eq4.55}
\end{align}
Hence, combining \eqref{eq4.50}--\eqref{eq4.55} and the fact that $l^{-38}\ll\lambda_{q+1}^{2\varepsilon}$ altogether we get
\begin{align}
    &\left\||\nabla|^{2\alpha_1-1}(\rho_l^{-1}w_{q+1})\right\|_{L^1_tC^{\varepsilon}_x}\lesssim l^{-6}\lambda_{q+1}^{-14\varepsilon}+l^{-16}\lambda^{-1}_{q+1}+l^{-38}\sigma^{-1}\lesssim l^{-16}\lambda_{q+1}^{-13\varepsilon},\label{eq4.56}\\
    &\left\||\nabla|^{2\alpha_2-1}d_{q+1}\right\|_{L^1_tC^{\varepsilon}_x}\lesssim l^{-5}\lambda_{q+1}^{-14\varepsilon}+l^{-15}\lambda^{-1}_{q+1}+l^{-38}\sigma^{-1}\lesssim l^{-15}\lambda_{q+1}^{-13\varepsilon}.\label{eq4.57}
\end{align}

For the remaining terms on the right-hand side of \eqref{eq4.48}, by interpolation and \eqref{eq2.3s}, \eqref{eq2.22}, \eqref{eq2.24}, \eqref{eq4.41}, Proposition \ref{prop-estimate-perturbation}, and the similar technique of \eqref{eq4.50}, we also have
\begin{align}
&\left\||\nabla|^{2\alpha_1-1}((\rho_{q+1}^{-1}-\rho_l^{-1})m_l)\right\|_{L^1_tC^{\varepsilon}_x}\lesssim l^{-52}\lambda_{q+1}^{-14\varepsilon},\label{eq4.58}\\
     &\left\||\nabla|^{2\alpha_1-1}((\rho_{q+1}^{-1}-\rho_l^{-1})w_{q+1})\right\|_{L^1_tC^{\varepsilon}_x}\lesssim l^{-87}\lambda^{-28\varepsilon}_{q+1}.\label{eq4.59}
\end{align}

Thus, plugging \eqref{eq4.56}--\eqref{eq4.59} into \eqref{eq4.48} and \eqref{eq4.49} we obtain that for $\alpha\in(\frac{1}{2},1)$,
\begin{equation}\label{eq4.60}
 \begin{aligned}        &\quad\left\|R^m_{{\rm lin},2}\right\|_{L^1_tC^{\varepsilon}_x}+\left\|R^b_{{\rm lin},2}\right\|_{L^1_tC^{\varepsilon}_x}\\
 &\lesssim l^{-16}\lambda_{q+1}^{-13\varepsilon}+l^{-15}\lambda_{q+1}^{-13\varepsilon}+l^{-52}\lambda_{q+1}^{-14\varepsilon}+l^{-87}\lambda_{q+1}^{-28\varepsilon}\\
 &\lesssim l^{-16}\lambda_{q+1}^{-12\varepsilon}.
 \end{aligned}
\end{equation}
Furthermore, together with \eqref{eq4.47}, we obtain the following estimates for $\alpha\in(0,1)$:
\begin{equation}\label{eq4.61}
\left\|R^m_{{\rm lin},2}\right\|_{L^1_tC^{\varepsilon}_x}+\left\|R^b_{{\rm lin},2}\right\|_{L^1_tC^{\varepsilon}_x}\lesssim l^{-16}\lambda_{q+1}^{-12\varepsilon}.
\end{equation}

Now we consider the bulk viscosity term $R^m_{{\rm lin},3}$ in \eqref{eq4.13}, by a direct calculation and \eqref{eq3.49},
\begin{equation}\label{eq4.62}
    \begin{aligned}
        \left\|R^m_{{\rm lin},3}\right\|_{L^1_tC^{\varepsilon}_x}     &\lesssim\left\|\nabla\rho_l^{-1}\right\|_{C_tC^{\varepsilon}_x}\left\|w_{q+1}\right\|_{L^1_tC^{\varepsilon}_x}+\left\|\nabla(\rho^{-1}_{q+1}-\rho_l^{-1})\right\|_{C_tC^{\varepsilon}_x}\left(\left\|m_l\right\|_{L^1_tC^{\varepsilon}_x}+\left\|w_{q+1}\right\|_{L^1_tC^{\varepsilon}_x}\right)\\
        &\quad+\left\|\rho^{-1}_{q+1}-\rho_l^{-1}\right\|_{C_tC^{\varepsilon}_x}\left(\left\|{\rm div}m_l\right\|_{L^1_tC^{\varepsilon}_x}+\left\|{\rm div}w^{(o)}_{q+1}\right\|_{L^1_tC^{\varepsilon}_x}\right)\\
        &\quad+\left\|\rho_l^{-1}\right\|_{C_tC^{\varepsilon}_x}\left\|{\rm div}w_{q+1}^{(o)}\right\|_{L^1_tC^{\varepsilon}_x}.
    \end{aligned}
\end{equation}
By a similar technique of \eqref{eq4.42} and \eqref{eq4.46}, we get
\begin{align}  &\left\|\nabla\rho_l^{-1}\right\|_{C_tC^{\varepsilon}_x}\lesssim\lambda_{q}^{\frac{\varepsilon^2}{4}+\frac{\varepsilon}{4}}\ll\lambda_q^{\varepsilon},\label{eq4.63}\\
&\left\|\nabla(\rho^{-1}_{q+1}-\rho_l^{-1})\right\|_{C_tC^{\varepsilon}_x}\lesssim\lambda_q^{\frac{\varepsilon^2}{4}}\lambda_{q+1}^{\frac{\varepsilon^2}{4}}l^{-28-10\varepsilon}\sigma^{-1}\lesssim l^{-30}\lambda_{q+1}^{-14\varepsilon}.\label{eq4.64}
\end{align}
Moreover, using \eqref{eq2.24}, \eqref{eq3.62} and interpolation inequality, we have
\begin{equation}\label{eq4.65}
    \left\|{\rm div}m_l\right\|_{L^1_tC^{\varepsilon}_x}\lesssim\left\|m_l\right\|_{C_tC^1_x}^{1-\varepsilon}\left\|m_l\right\|_{C_tC^2_x}^{\varepsilon}\lesssim\left(\lambda_q^4\right)^{1-\varepsilon}\left(l^{-1}\lambda_q^4\right)^{\varepsilon}\lesssim l^{-\varepsilon}\lambda_q^4,
\end{equation}
and
\begin{equation}\label{eq4.66}
    \begin{aligned}
        \left\|{\rm div}w^{(o)}_{q+1}\right\|_{L^1_tC^{\varepsilon}_x}
        &\lesssim\left\|{\rm div}w^{(o)}_{q+1}\right\|_{L^1_tC_x}^{1-\varepsilon}\left\|{\rm div}w^{(o)}_{q+1}\right\|_{L^1_tC^1_x}^{\varepsilon}\\
        &\lesssim\left(l^{-18}\sigma^{-1}\right)^{1-\varepsilon}\left(l^{-28}\sigma^{-1}\right)^{\varepsilon}\\
        &\lesssim l^{-28}\lambda_{q+1}^{-15\varepsilon}.
    \end{aligned}
\end{equation}
Hence, substituting \eqref{eq4.40}, \eqref{eq4.45}, \eqref{eq4.46} and \eqref{eq4.63}--\eqref{eq4.66} into \eqref{eq4.62} and using \eqref{eq2.3s},
\begin{equation}\label{eq4.67}
    \begin{aligned}
        \left\|R^m_{{\rm lin},3}\right\|_{L^1_tC^{\varepsilon}_x}     &\lesssim\lambda_q^{\varepsilon}l^{-5}\lambda_{q+1}^{-14\varepsilon}+l^{-30}\lambda_{q+1}^{-14\varepsilon}\left(\lambda_q^4+l^{-5}\lambda_{q+1}^{-14\varepsilon}\right)\\
        &\quad+l^{-28}\lambda_{q+1}^{-15\varepsilon}+l^{-29}\lambda_{q+1}^{-14\varepsilon}\left(l^{-\varepsilon}\lambda^{4}_q+l^{-28}\lambda_{q+1}^{-15\varepsilon}\right)\\
        &\lesssim l^{-31}\lambda_{q+1}^{-14\varepsilon}.
    \end{aligned}
\end{equation}

For the remaining terms $R^m_{{\rm lin},4}$, $R^m_{{\rm lin},5}$, $R^b_{{\rm lin},3}$ and $R^b_{{\rm lin},4}$ in \eqref{eq4.6} and \eqref{eq4.13}, using \eqref{eq4.40}, \eqref{eq4.43}, \eqref{eq4.45} and \eqref{eq4.46}, by a direct calculation,
\begin{equation}\label{eq4.68}
    \begin{aligned}      &\quad\left\|R^m_{{\rm lin},4}\right\|_{L^1_tC^{\varepsilon}_x}+\left\|R^m_{{\rm lin},5}\right\|_{L^1_tC^{\varepsilon}_x}+\left\|R^b_{{\rm lin},3}\right\|_{L^1_tC^{\varepsilon}_x}+\left\|R^b_{{\rm lin},4}\right\|_{L^1_tC^{\varepsilon}_x}\\  &\lesssim\left\|\rho_{q+1}^{-1}\right\|_{C_tC^{\varepsilon}_x}\left(\left\|m_l\right\|_{C_tC^{\varepsilon}_x}+\left\|b_l\right\|_{C_tC^{\varepsilon}_x}\right)\left(\left\|d_{q+1}\right\|_{L^1_tC^{\varepsilon}_x}+\left\|w_{q+1}\right\|_{L^1_tC^{\varepsilon}_x}\right)\\
    &\quad+\left\|\rho_{q+1}^{-1}-\rho_l^{-1}\right\|_{C_tC^{\varepsilon}_x}\left(\left\|m_l\right\|_{C_tC^{\varepsilon}_x}+\left\|b_l\right\|_{C_tC^{\varepsilon}_x}\right)^2+\left\|b_l\right\|_{C_tC^{\varepsilon}_x}\left\|d_{q+1}\right\|_{L^1_tC^{\varepsilon}_x}\\
    &\lesssim \lambda_{q+1}^{\frac{\varepsilon^2}{4}}\lambda_q^4l^{-5}\lambda_{q+1}^{-14\varepsilon}+l^{-29}\lambda^{-14\varepsilon}_{q+1}\lambda_q^8+\lambda_q^4l^{-5}\lambda_{q+1}^{-14\varepsilon}\\
    &\lesssim l^{-30}\lambda_{q+1}^{-14\varepsilon}.
    \end{aligned}
\end{equation}

We now conclude from \eqref{eq4.32}, \eqref{eq4.33}, \eqref{eq4.61}, \eqref{eq4.67} and \eqref{eq4.68} altogether that
\begin{equation}\label{eq4.69}
    \begin{aligned}
\left\|R^m_{{\rm lin}}\right\|_{L^1_tC^{\varepsilon}_x}+\left\|R^b_{{\rm lin}}\right\|_{L^1_tC^{\varepsilon}_x}
&\lesssim l^{-27}\lambda_{q+1}^{-9\varepsilon}+l^{-13}\lambda_{q+1}^{-9\varepsilon} +l^{-16}\lambda_{q+1}^{-12\varepsilon}+l^{-31}\lambda_{q+1}^{-14\varepsilon}+l^{-30}\lambda_{q+1}^{-14\varepsilon}\\
&\lesssim l^{-27}\lambda_{q+1}^{-9\varepsilon}.
    \end{aligned}
\end{equation}
\subsubsection{Oscillation errors $R^m_{{\rm osc}}$ and $R^b_{{\rm osc}}$}
We now deal with the oscillation errors. First, we concern the high-low spatial oscillation errors $R^m_{{\rm osc},1}$, $R^m_{{\rm osc},2}$ and $R^b_{{\rm osc},1}$ in \eqref{eq4.7} and \eqref{eq4.14}. 

The key fact is that, the momentum and magnetic flows are of high oscillations while the amplitude function is slowly varying. Thus, we use the following stationary phase lemma whose proof is similar to \cite{DCS-2013-invent}, here we omit the details.
\begin{lem}\label{lem-stationary-phase}
    Let $\varepsilon\in(0,1)$ and $m\geq1$. Assume that $\theta\in C^{m,\varepsilon}(\mathbb{T}^3)$, then we have
    \begin{align*}
        \left\||\nabla|^{-1}\left(\theta(x)e^{i\lambda\xi\cdot x}\right)\right\|_{C^{\varepsilon}_x}\lesssim\frac{\|\theta\|_{C_x}}{\lambda^{1-\varepsilon}}+\frac{\|\theta\|_{C^{m,\varepsilon}_x}}{\lambda^{m-\varepsilon}},
    \end{align*}
    where $\lambda\xi\in\mathbb{Z}^3$ and the implicit constant is independent of $q$.
\end{lem}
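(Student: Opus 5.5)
The plan is to work entirely on the Fourier side, as in the standard stationary phase argument of De Lellis--Sz\'ekelyhidi. Set $\psi:=\theta e^{i\lambda\xi\cdot x}$ and use the identity
\begin{align*}
\nabla\cdot\big(\theta e^{i\lambda\xi\cdot x}\,\lambda\xi\big)=i|\lambda\xi|^2\theta e^{i\lambda\xi\cdot x}+(\lambda\xi\cdot\nabla\theta)e^{i\lambda\xi\cdot x}.
\end{align*}
Iterating it $m$ times gives an expansion
\begin{align*}
\theta e^{i\lambda\xi\cdot x}=\sum_{j=0}^{m-1}\lambda^{-1-j}\,\mathrm{div}\big(A_j(\nabla^j\theta)e^{i\lambda\xi\cdot x}\big)+\lambda^{-m}B_m(\nabla^m\theta)e^{i\lambda\xi\cdot x}.
\end{align*}
Here $A_j,B_m$ are fixed linear maps depending only on $\xi$ (with $|\xi|$ bounded below, as holds for the finitely many directions $N_\Lambda k$ used). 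Equivalently, one can write $\theta e^{i\lambda\xi\cdot x}$ as a sum of derivatives of lower-order terms divided by powers of $\lambda$.

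The key steps are as follows.

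First, apply $|\nabla|^{-1}$ to each term. For the divergence terms, $|\nabla|^{-1}\mathrm{div}$ is a Calder\'on--Zygmund operator, hence bounded on $C^\varepsilon_x$. Each such term is therefore bounded by $\lambda^{-1-j}\|\nabla^j\theta\, e^{i\lambda\xi\cdot x}\|_{C^\varepsilon_x}$. This is at most $\lambda^{-1-j}\big(\|\theta\|_{C^j}\lambda^{\varepsilon}+\|\theta\|_{C^{j,\varepsilon}}\big)$, since the exponential has $C^\varepsilon$ seminorm $\lesssim\lambda^\varepsilon$.

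Second, remove the intermediate terms $j=1,\dots,m-1$. Interpolate $\|\theta\|_{C^j}\lesssim\|\theta\|_{C_x}^{1-j/(m+\varepsilon)}\|\theta\|_{C^{m,\varepsilon}}^{j/(m+\varepsilon)}$, and then use Young's inequality. This bounds $\lambda^{\varepsilon-1-j}\|\theta\|_{C^j}$ (and similarly the $C^{j,\varepsilon}$ pieces) by the two endpoint quantities $\lambda^{\varepsilon-1}\|\theta\|_{C_x}$ and $\lambda^{\varepsilon-m}\|\theta\|_{C^{m,\varepsilon}}$. The exponents match because the scaling is homogeneous: each derivative costs exactly one power of $\lambda$. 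The $j=0$ term directly gives $\lambda^{\varepsilon-1}\|\theta\|_{C_x}+\lambda^{-1}\|\theta\|_{C^\varepsilon}$, and the second piece is also absorbed by interpolation.

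Third, handle the remainder $\lambda^{-m}|\nabla|^{-1}\big(B_m(\nabla^m\theta)e^{i\lambda\xi\cdot x}\big)$. Since $|\nabla|^{-1}$ is smoothing of order one on mean-zero functions, it is bounded on $C^\varepsilon$; the mean is controlled trivially, e.g.\ by projecting with $\mathbb{P}_{\neq0}$. This term is then bounded by $\lambda^{-m}\big(\|\nabla^m\theta\|_{C_x}\lambda^\varepsilon+\|\nabla^m\theta\|_{C^\varepsilon}\big)\lesssim\lambda^{\varepsilon-m}\|\theta\|_{C^{m,\varepsilon}}$, as required.

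The main obstacle is purely bookkeeping: making sure the constants in the integration by parts are independent of $\lambda$, which follows from $\lambda\xi\in\mathbb{Z}^3$ and the finiteness of the direction set, and carrying out the interpolation of the intermediate $j$ terms cleanly.

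A shortcut is available if $|\nabla|^{-1}$ applied to the exponential alone is cleaner. One writes $\theta e^{i\lambda\xi\cdot x}=\theta_{\le\lambda/2}e^{i\lambda\xi\cdot x}+\theta_{>\lambda/2}e^{i\lambda\xi\cdot x}$ via Littlewood--Paley. The low part is frequency-localized near $\lambda\xi$, so $|\nabla|^{-1}$ acts like $\lambda^{-1}$ times a bounded multiplier on $C^\varepsilon$. The high part uses $\|\theta_{>\lambda/2}\|_{C^\varepsilon}\lesssim\lambda^{-m}\|\theta\|_{C^{m,\varepsilon}}$ together with the boundedness of $|\nabla|^{-1}$. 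I would try this Littlewood--Paley version first, since it avoids the iterated expansion entirely.
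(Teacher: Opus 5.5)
Your proposal is correct. It is essentially the standard De Lellis--Sz\'ekelyhidi argument that the paper cites in place of a proof: an approximate inversion of $\theta e^{i\lambda\xi\cdot x}$ by iterated integration by parts, Calder\'on--Zygmund/Schauder bounds for $|\nabla|^{-1}{\rm div}$ and for $|\nabla|^{-1}$ on the remainder, and interpolation for the intermediate terms (which in fact gives the stronger weight $\lambda^{-1-m}$, dominated by $\lambda^{\varepsilon-m}$ for $\lambda\ge 1$). You were right to make explicit that $|\xi|$ must be bounded below, since the statement fails otherwise (e.g.\ $\theta\equiv 1$, $|\lambda\xi|=1$); your Littlewood--Paley variant works equally well.
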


Note that 
\begin{align*}
    &\mathbb{P}_{\neq0}\left(W_{(k)}\otimes W_{(k)}\right)=\mathbb{P}_{\geq(\lambda/2)}\left(W_{(k)}\otimes W_{(k)}\right)=\mathbb{P}_{\geq(\lambda/2)}(\phi_{(k)}^2)k_1\otimes k_1,\\
    &\mathbb{P}_{\neq0}\left(D_{(k)}\otimes D_{(k)}\right)=\mathbb{P}_{\geq(\lambda/2)}\left(D_{(k)}\otimes D_{(k)}\right)=\mathbb{P}_{\geq(\lambda/2)}(\phi_{(k)}^2)k_2\otimes k_2,\\
    &\mathbb{P}_{\neq0}\left(|D_{(k)}|^2\mathbb{I}\right)=\mathbb{P}_{\geq(\lambda/2)}\left(|D_{(k)}|^2\mathbb{I}\right)=\mathbb{P}_{\geq(\lambda/2)}(\phi_{(k)}^2)\mathbb{I},
\end{align*}
and
\begin{align*}
    &\quad\mathbb{P}_{\neq0}\left(D_{(k)}\otimes W_{(k)}-W_{(k)}\otimes D_{(k)}\right)\\
    &=\mathbb{P}_{\geq(\lambda/2)}\left(D_{(k)}\otimes W_{(k)}-W_{(k)}\otimes D_{(k)}\right)\\
    &=\mathbb{P}_{\geq(\lambda/2)}(\phi_{(k)}^2)(k_2\otimes k_1-k_1\otimes k_2),
\end{align*}
furthermore, $\{\phi_{(k)}^2\}_{k\in\Lambda_m\cup\Lambda_b}$ are $(\mathbb{T}/\lambda)^3$-periodic functions, we can decompose
\begin{align}\label{eq4.70}
    \mathbb{P}_{\geq(\lambda/2)}(\phi_{(k)}^2)=\sum_{\xi\in\mathbb{Z}^3\backslash\{0\}}f_{k}(\xi)e^{i\lambda\xi\cdot x},
\end{align}
where $f_k(\xi)$ are the Fourier coefficients of $\mathbb{P}_{\geq(\lambda/2)}(\phi_{(k)}^2)$, and decay faster than arbitrary polynomials:
\begin{align*}
    |f_k(\xi)|\leq C|\xi|^{-m},
\end{align*}
for any $m\in\mathbb{N}$. Moreover, the constant $C$ is independent of $q$.

Thus, using \eqref{eq4.70}, Lemmas \ref{lem-magnetic-amplitude-estimate}, \ref{lem-momentum-amplitude-estimate} and \ref{lem-stationary-phase} we have
\begin{equation}\label{eq4.71}
    \begin{aligned}
&\quad\left\|R^m_{{\rm osc},1}\right\|_{L_t^1 C_x^{\varepsilon}}+\left\|R^m_{{\rm osc},2}\right\|_{L_t^1 C_x^{\varepsilon}} +\left\|R^b_{{\rm osc},1}\right\|_{L_t^1 C_x^{\varepsilon}}\\
& \lesssim \sum_{k \in \Lambda_m\cup\Lambda_b}\left\|g_{(k)}\right\|_{L_t^2}^2\left\|\mathcal{R}^m \mathbb{P}_{\neq 0}\left(\mathbb{P}_{\geq(\lambda / 2)}\left(W_{(k)} \otimes W_{(k)}\right) \nabla\left(\rho_{l}^{-1} a_{(k)}^2\right)\right)\right\|_{C_t C_x^{\varepsilon}} \\
&\quad+\sum_{k \in \Lambda_b}\left\|g_{(k)}\right\|_{L_t^2}^2\left\|\mathcal{R}^m \mathbb{P}_{\neq 0}\left(\mathbb{P}_{\geq(\lambda / 2)}\left(D_{(k)} \otimes D_{(k)}-\frac{1}{2}|D_{(k)}|^2\mathbb{I}\right) \nabla\left(a_{(k)}^2\right)\right)\right\|_{C_t C_x^{\varepsilon}}\\
&\quad+\sum_{k \in \Lambda_b}\left\|g_{(k)}\right\|_{L_t^2}^2\left\|\mathcal{R}^b \mathbb{P}_{\neq 0}\left(\mathbb{P}_{\geq(\lambda / 2)}\left(D_{(k)} \otimes W_{(k)}-W_{(k)} \otimes D_{(k)}\right) \nabla\left(\rho_{l}^{-1} a_{(k)}^2\right)\right)\right\|_{C_t C_x^{\varepsilon}} \\
& \lesssim \sum_{k \in \Lambda_m\cup\Lambda_b} \sum_{\xi \in \mathbb{Z}^3 \backslash\{0\}}|\xi|^{-10} \lambda^{\varepsilon-1}\left(\left\|\nabla\left(\rho_{l}^{-1} a_{(k)}^2\right)\right\|_{C_{t, x}}+\left\|\nabla\left(\rho_{l}^{-1} a_{(k)}^2\right)\right\|_{C_t C_x^{1, {\varepsilon}}}\right) \\
&\quad+\sum_{k \in \Lambda_b} \sum_{\xi \in \mathbb{Z}^3 \backslash\{0\}}|\xi|^{-10} \lambda^{\varepsilon-1}\left(\left\|\nabla\left(a_{(k)}^2\right)\right\|_{C_{t, x}}+\left\|\nabla\left(a_{(k)}^2\right)\right\|_{C_t C_x^{1, {\varepsilon}}}\right) \\
& \lesssim \sum_{k \in \Lambda_m\cup\Lambda_b} \lambda_{q+1}^{\varepsilon-1}\left(\left\|\rho_{l}^{-1}\right\|_{C_t C_x^3}+1\right)\left\|a_{(k)}^2\right\|_{C_{t, x}^3} \\
& \lesssim \lambda_{q+1}^{\varepsilon-1}\left(\lambda_q^{\frac{3\varepsilon}{4}}+1\right)l^{-37}\\
&\lesssim l^{-38}\lambda_{q+1}^{\varepsilon-1}.
\end{aligned}
\end{equation}

Regarding the temporal oscillation errors $R^m_{{\rm osc},3}$, $R^m_{{\rm osc},4}$ and $R^b_{{\rm osc},2}$ in \eqref{eq4.7} and \eqref{eq4.14}, we use \eqref{eq2.21}, \eqref{eq2.22}, \eqref{eq3.15}, \eqref{eq3.27} and \eqref{eq3.39} to estimate
\begin{equation}\label{eq4.72}
    \begin{aligned}
&\quad\left\|R^m_{{\rm osc},3}\right\|_{L_t^1 C_x^{\varepsilon}}+\left\|R^m_{{\rm osc},4}\right\|_{L_t^1 C_x^{\varepsilon}}+\left\|R^b_{{\rm osc},2}\right\|_{L_t^1 C_x^{\varepsilon}}\\
& \lesssim \sigma^{-1} \sum_{k \in \Lambda_m\cup\Lambda_b}\left\|h_{(k)}\right\|_{C_t}\left\|\partial_t \nabla\left(\rho_{l}^{-1} a_{(k)}^2\right)\right\|_{L_t^1 C_x^{\varepsilon}} +\sigma^{-1} \sum_{k \in\Lambda_b}\left\|h_{(k)}\right\|_{C_t}\left\|\partial_t \nabla\left(a_{(k)}^2\right)\right\|_{L_t^1 C_x^{\varepsilon}}\\
& \lesssim \sigma^{-1} \sum_{k \in \Lambda_m\cup\Lambda_b}\left\|h_{(k)}\right\|_{C_t}\left\|\partial_t \nabla\left(\rho_{l}^{-1} a_{(k)}^2\right)\right\|_{C_{t, x}}^{1-\varepsilon}\left\|\partial_t \nabla\left(\rho_{l}^{-1} a_{(k)}^2\right)\right\|_{C_{t, x}^1}^{\varepsilon} \\
&\quad+\sigma^{-1} \sum_{k \in\Lambda_b}\left\|h_{(k)}\right\|_{C_t}\left\|\partial_t \nabla\left(a_{(k)}^2\right)\right\|_{C_{t, x}}^{1-\varepsilon}\left\|\partial_t \nabla\left(a_{(k)}^2\right)\right\|_{C_{t, x}^1}^{\varepsilon} \\
& \lesssim \sigma^{-1} \sum_{k \in \Lambda_m\cup\Lambda_b}\left\|h_{(k)}\right\|_{C_t}\left(\left\|\rho_{l}^{-1}\right\|_{C_{t, x}^2}\left\|a_{(k)}^2\right\|_{C_{t, x}^2}\right)^{1-\varepsilon}\left(\left\|\rho_{l}^{-1}\right\|_{C_{t, x}^3}\left\|a_{(k)}^2\right\|_{C_{t, x}^3}\right)^{\varepsilon}\\
&\quad+\sigma^{-1} \sum_{k \in \Lambda_b}\left\|h_{(k)}\right\|_{C_t}\left\|a_{(k)}^2\right\|_{C_{t, x}^2}^{2-\varepsilon}\left\|a_{(k)}^2\right\|_{C_{t, x}^3}^{3\varepsilon}\\
&\lesssim \sigma^{-1}l^{-28(1-\varepsilon)-38\varepsilon}\\
&\lesssim l^{-29}\lambda_{q+1}^{-15\varepsilon}.
\end{aligned}
\end{equation}

Finally, for the density errors $R^m_{{\rm osc},5}$ and $R^b_{{\rm osc},3}$ in \eqref{eq4.7} and \eqref{eq4.14}, by \eqref{eq4.46}, Proposition \ref{prop-estimate-perturbation} and interpolation,
\begin{equation}\label{eq4.73}
    \begin{aligned}
        &\quad\left\|R^m_{{\rm osc},5}\right\|_{L_t^1 C_x^{\varepsilon}}+\left\|R^b_{{\rm osc},3}\right\|_{L_t^1 C_x^{\varepsilon}}\\
        &\lesssim\left\|\rho_{q+1}^{-1}-\rho_l^{-1}\right\|_{C_tC^{\varepsilon}_x}\left(\left\|w_{q+1}\right\|^2_{L^2_tC^{\varepsilon}_x}+\left\|w_{q+1}\right\|_{L^2_tC^{\varepsilon}_x}\left\|d_{q+1}\right\|_{L^2_tC^{\varepsilon}_x}\right)\\
        &\lesssim l^{-29}\lambda_{q+1}^{-14\varepsilon}l^{-10}\lambda_{q+1}^{2\varepsilon}\\
        &\lesssim l^{-39}\lambda_{q+1}^{-12\varepsilon}.
    \end{aligned}
\end{equation}

Thus, combining \eqref{eq4.71}--\eqref{eq4.73} altogether we conclude
\begin{equation}\label{eq4.74}
    \left\|R^m_{{\rm osc}}\right\|_{L_t^1 C_x^{\varepsilon}}+\left\|R^b_{{\rm osc}}\right\|_{L_t^1 C_x^{\varepsilon}}\lesssim l^{-38}\lambda_{q+1}^{\varepsilon-1}+l^{-29}\lambda_{q+1}^{-15\varepsilon}+l^{-39}\lambda_{q+1}^{-12\varepsilon}\lesssim l^{-29}\lambda_{q+1}^{-11\varepsilon}.
\end{equation}
\subsubsection{Corrector errors $R^m_{{\rm cor}}$ and $R^b_{{\rm cor}}$} Now we consider the corrector errors $R^m_{{\rm cor}}$ and $R^b_{{\rm cor}}$ in \eqref{eq4.8} and \eqref{eq4.15}, using \eqref{eq4.42}, interpolation inequality and Proposition \ref{prop-estimate-perturbation} we derive
\begin{equation}\label{eq4.75}
 \begin{aligned}
&\quad\left\|R^m_{{\rm cor}}\right\|_{L_t^1 C_x^{\varepsilon}}+\left\|R^b_{{\rm cor}}\right\|_{L_t^1 C_x^{\varepsilon}}\\  
& \lesssim\left\|\rho_{l}^{-1}\right\|_{C_t C_x^{\varepsilon}}\left\|w_{q+1}^{(c)}+w_{q+1}^{(o)}\right\|_{L_t^2 C_x^{\varepsilon}}\left(\left\|w_{q+1}^{(p)}\right\|_{L_t^2 C_x^{\varepsilon}}+\left\|w_{q+1}\right\|_{L_t^2 C_x^{\varepsilon}}\right)\\
&\quad+\left\|d_{q+1}^{(c)}+d_{q+1}^{(o)}\right\|_{L_t^2 C_x^{\varepsilon}}\left(\left\|d_{q+1}^{(p)}\right\|_{L_t^2 C_x^{\varepsilon}}+\left\|d_{q+1}\right\|_{L_t^2 C_x^{\varepsilon}}\right)\\
&\quad+\left\|\rho_{l}^{-1}\right\|_{C_t C_x^{\varepsilon}}\left(\left\|d_{q+1}^{(c)}+d_{q+1}^{(o)}\right\|_{L_t^2 C_x^{\varepsilon}}\left\|w_{q+1}\right\|_{L_t^2 C_x^{\varepsilon}}+\left\|w_{q+1}^{(c)}+w_{q+1}^{(o)}\right\|_{L_t^2 C_x^{\varepsilon}}\left\|d_{q+1}\right\|_{L_t^2 C_x^{\varepsilon}}\right)\\
&\lesssim\left(\lambda_q^{\frac{\varepsilon^2}{4}}+1\right)\left(l^{-15}\lambda_{q+1}^{\varepsilon-1}+l^{-18-10\varepsilon}\sigma^{-1}\right)l^{-5}\lambda_{q+1}^{\varepsilon}\\
&\lesssim l^{-24}\lambda_{q+1}^{-14\varepsilon}.
\end{aligned}
\end{equation}
\subsubsection{Pressure error $R^m_{{\rm pre}}$}
We will use the boundedness of Calder\'on-Zygmund operators, the mean value theorem and interpolation inequality to deal with the pressure error $R^m_{{\rm pre}}$ in \eqref{eq4.16}.

More  precisely,
\begin{equation}\label{eq4.76}
    \begin{aligned}
\left\|R^m_{{\rm pre}}\right\|_{L_t^1 C_x^{\varepsilon}} & \lesssim\left\|P\left(\rho_{q+1}\right)-P\left(\rho_{l}\right)\right\|_{C_t C_x^{\varepsilon}} \\
& \lesssim\left\|\int_{\rho_{l}}^{\rho_{q+1}} P^{\prime}(y) \mathrm{d} y\right\|_{C_{t, x}}^{1-\varepsilon}\left\|P\left(\rho_{q+1}\right)-P\left(\rho_{l}\right)\right\|_{C_t C_x^1}^{\varepsilon} \\
& \lesssim\left\|P^{\prime}(\zeta)\left(\rho_{q+1}-\rho_{l}\right)\right\|_{C_{t, x}}^{1-\varepsilon}\left(\left\|P\left(\rho_{q+1}\right)\right\|_{C_t C_x^1}+\left\|P\left(\rho_{l}\right)\right\|_{C_t C_x^1}\right)^{\varepsilon} \\
& \lesssim\left\|P^{\prime}(\zeta)\right\|_{C_{t, x}}^{1-\varepsilon}\left\|\rho_{q+1}-\rho_{l}\right\|_{C_{t, x}}^{1-\varepsilon}\left(\left\|P\left(\rho_{q+1}\right)\right\|_{C_{t, x}}+\left\|P\left(\rho_{l}\right)\right\|_{C_{t, x}}\right. \\
& \left.\qquad+\left\|P^{\prime}\left(\rho_{q+1}\right)\right\|_{C_{t, x}}\left\|\rho_{q+1}\right\|_{C_t C_x^1}+\left\|P^{\prime}\left(\rho_{l}\right)\right\|_{C_{t, x}}\left\|\rho_{l}\right\|_{C_t C_x^1}\right)^{\varepsilon},
\end{aligned}
\end{equation}
where $\min\{\rho_{q+1},\rho_l\}\leq\zeta\leq\max\{\rho_{q+1},\rho_l\}$.

Since $\rho_{q+1}$ and $\rho_l$ are uniformly away from zero and infinity, and $P$, $P^{\prime}$, $P^{\prime\prime}$ are all continuous, we have
\begin{equation}\label{eq4.77}
\left\|P^{\prime}(\zeta)\right\|_{C_{t, x}}+\left\|P^{\prime}\left(\rho_{l}\right)\right\|_{C_{t, x}}+\left\|P^{\prime}\left(\rho_{q+1}\right)\right\|_{C_{t, x}} \lesssim 1 .
\end{equation}
Thus, we conclude the following estimates by using \eqref{eq2.6} at level $q+1$, \eqref{eq2.22}, \eqref{eq3.64} and \eqref{eq4.41}:
\begin{equation}\label{eq4.78}
    \left\|R^m_{{\rm pre}}\right\|_{L_t^1 C_x^{\varepsilon}}\lesssim \left(l^{-29}\lambda_{q+1}^{-14\varepsilon}\right)^{1-\varepsilon}\left(1+\lambda_{q+1}^{\frac{\varepsilon}{4}}+\lambda_{q}^{\frac{\varepsilon}{4}}\right)^{\varepsilon}\lesssim l^{-29}\lambda_{q+1}^{-13\varepsilon}.
\end{equation}
\subsubsection{Commutator errors $R^m_{{\rm com}}$ and $R^b_{{\rm com}}$}
It remains to deal with the commutator errors $R^m_{{\rm com}}$ and $R^b_{{\rm com}}$ given by\eqref{eq2.19} and \eqref{eq2.20}.

To begin with, concerning the shear viscous commutator $R^m_{{\rm com},1}$. For the case $\alpha_1\in(\frac{1}{2},1)$, using interpolation inequality will leads 
\begin{equation}\label{eq4.79}
\left\|R^m_{{\rm com},1}\right\|_{L_t^1 C_x^{\varepsilon}} \lesssim\left\|\rho_{l}^{-1} m_{l}-\left(\rho_q^{-1} m_q\right) *_x \phi_{l} *_t \varphi_{l}\right\|_{C_{t, x}}^{1-\frac{2 \alpha_1-1+\varepsilon}{2}}\left\|\rho_{l}^{-1} m_{l}-\left(\rho_q^{-1} m_q\right) *_x \phi_{l} *_t \varphi_{l}\right\|_{C_t C_x^2}^{\frac{2 \alpha_1-1+\varepsilon}{2}} .
\end{equation}
Furthermore, by \eqref{eq2.6}, \eqref{eq2.7}, \eqref{eq2.25}, \eqref{eq4.26}, and using \eqref{eq4.41} at level $q$,
\begin{equation}\label{eq4.80}
\begin{aligned}
& \quad\left\|\rho_{l}^{-1} m_{l}-\left(\rho_q^{-1} m_q\right) *_x \phi_{l} *_t \varphi_{l}\right\|_{C_{t, x}} \\
&\lesssim \left\|\rho_{l}^{-1} m_{l}-\rho_{l}^{-1} m_q\right\|_{C_{t, x}}+\left\|\rho_{l}^{-1} m_q-\rho_q^{-1} m_q\right\|_{C_{t, x}}+\left\|\rho_q^{-1} m_q-\left(\rho_q^{-1} m_q\right) *_x \phi_{l} *_t \varphi_{l}\right\|_{C_{t, x}} \\
 &\lesssim \left\|\rho_{l}^{-1}\right\|_{C_{t, x}}\left\|m_{l}-m_q\right\|_{C_{t, x}}+\left\|\rho_{l}^{-1}-\rho_q^{-1}\right\|_{C_{t, x}}\left\|m_q\right\|_{C_{t, x}}+l\left\|\rho_q^{-1} m_q\right\|_{C_{t, x}^1} \\
 &\lesssim l\left\|m_q\right\|_{C_{t, x}^1}+\left\|\rho_{l}^{-1}\right\|_{C_{t, x}}\left\|\rho_q^{-1}\right\|_{C_{t, x}}\left\|\rho_{l}-\rho_q\right\|_{C_{t, x}}\left\|m_q\right\|_{C_{t, x}}+l\left\|\rho_q^{-1}\right\|_{C_{t, x}^1}\left\|m_q\right\|_{C_{t, x}^1} \\
 &\lesssim l \lambda_q^4+l \lambda_q^{2+\frac{\varepsilon}{4}}+l \lambda_q^{4+\frac{\varepsilon}{4}} \lesssim l^{\frac{1}{2}},
\end{aligned}
\end{equation}
and
\begin{equation}\label{eq4.81}
\begin{aligned}
& \quad\left\|\rho_{l}^{-1} m_{l}-\left(\rho_q^{-1} m_q\right) *_x \phi_{l} *_t \varphi_{l}\right\|_{C_t C_x^2} \\
 &\lesssim \left\|\rho_{l}^{-1}\right\|_{C_t C_x^2}\left\|m_{l}-m_q\right\|_{C_t C_x^2}+\left\|\rho_{l}^{-1}-\rho_q^{-1}\right\|_{C_t C_x^2}\left\|m_q\right\|_{C_t C_x^2}+l\left\|\rho_q^{-1} m_q\right\|_{C_t C_x^3}+l\left\|\rho_q^{-1} m_q\right\|_{C_t^1 C_x^2} \\
 &\lesssim l \lambda_q^{\frac{\varepsilon}{2}}\left\|m_q\right\|_{C_{t, x}^3}+l^{-49} \lambda^{-14 \varepsilon} \lambda_q^6+l\left\|\rho_q^{-1}\right\|_{C_t C_x^3}\left\|m_q\right\|_{C_t C_x^3}+l\left\|\rho_q^{-1}\right\|_{C_t^1 C_x^2}\left\|m_q\right\|_{C_t^1 C_x^2} \\
 &\lesssim l \lambda_q^{8+\frac{\varepsilon}{2}}+l^{-49} \lambda_{q+1}^{-14 \varepsilon}+l \lambda_q^{8+\varepsilon} \lesssim l^{\frac{1}{2}}.
\end{aligned}
\end{equation}
For the case $\alpha_1\in(0,\frac{1}{2}]$, a similar argument yields that
\begin{equation}\label{eq4.82}
\left\|R^m_{{\rm com},1}\right\|_{L_t^1 C_x^{\varepsilon}} \lesssim\left\|\rho_{l}^{-1} m_{l}-\left(\rho_q^{-1} m_q\right) *_x \phi_{l} *_t \varphi_{l}\right\|_{C_{t, x}}^{1-\frac{\varepsilon}{2}}\left\|\rho_{l}^{-1} m_{l}-\left(\rho_q^{-1} m_q\right) *_x \phi_{l} *_t \varphi_{l}\right\|_{C_t C_x^2}^{\frac{\varepsilon}{2}}\lesssim l^{\frac{1}{2}} .
\end{equation}
Thus, we conclude from \eqref{eq4.79}--\eqref{eq4.82} that
\begin{equation}\label{eq4.83}
    \left\|R^m_{{\rm com},1}\right\|_{L_t^1 C_x^{\varepsilon}}\lesssim l^{\frac{1}{2}}. 
\end{equation}

Then, for the pressure commutator error $R^m_{{\rm com},2}$, similar to \eqref{eq4.78}, we have
\begin{equation}\label{eq4.84}
\begin{aligned}
\left\|R^m_{{\rm com},2}\right\|_{L_t^1 C_x^{\varepsilon}} & \lesssim\left\|P\left(\rho_{l}\right)-P_{l}\right\|_{C_{t, x}}^{1-\varepsilon}\left\|P\left(\rho_{l}\right)-P_{l}\right\|_{C_t C_x^1}^{\varepsilon} \\
& \lesssim\left(\left\|P\left(\rho_{l}\right)-P\left(\rho_q\right)\right\|_{C_{t, x}}+\left\|P\left(\rho_q\right)-P_{l}\right\|_{C_{t, x}}\right)^{1-\varepsilon}\left(\left\|P\left(\rho_q\right)\right\|_{C_t C_x^1}+\left\|P_{l}\right\|_{C_t C_x^1}\right)^{\varepsilon} \\
& \lesssim\left(\left\|\rho_{l}-\rho_q\right\|_{C_{t, x}}+l\left\|P\left(\rho_q\right)\right\|_{C_{t, x}^1}\right)^{1-\varepsilon}\left(\left\|P\left(\rho_q\right)\right\|_{C_{t, x}}+\left\|P^{\prime}\left(\rho_q\right)\right\|_{C_{t, x}}\left\|\rho_q\right\|_{C_t C_x^1}\right)^{\varepsilon}\\
&\lesssim\left(l\lambda_q^{\frac{\varepsilon}{4}}\right)^{1-\varepsilon}\left(1+\lambda_q^{\frac{\varepsilon}{4}}\right)^{\varepsilon}\lesssim l^{1-\varepsilon}\lambda_q^{\frac{\varepsilon}{4}}\lesssim l^{\frac{1}{2}}.
\end{aligned}
\end{equation}

Next, the bulk viscous commutator error $R^m_{{\rm com},3}$ can be estimated in a similar fashion as $R^m_{{\rm com},1}$:
\begin{equation}\label{eq4.85}
\left\|R^m_{{\rm com},3}\right\|_{L_t^1 C_x^{\varepsilon}} \lesssim\left\|\rho_{l}^{-1} m_{l}-\left(\rho_q^{-1} m_q\right) *_x \phi_{l} *_t \varphi_{l}\right\|_{C_{t, x}}^{1-\frac{1+\varepsilon}{2}}\left\|\rho_{l}^{-1} m_{l}-\left(\rho_q^{-1} m_q\right) *_x \phi_{l} *_t \varphi_{l}\right\|_{C_t C_x^2}^{\frac{1+\varepsilon}{2}}\lesssim l^{\frac{1}{2}} .
\end{equation}

At last, concerning the nonlinear commutator errors $R^m_{{\rm com},4}$, $R^m_{{\rm com},5}$ and $R^b_{{\rm com}}$, we see that
\begin{equation}\label{eq4.86}
\begin{aligned}
&\quad\left\|R^m_{{\rm com},4}\right\|_{L_t^1 C_x^{\varepsilon}}+\left\|R^m_{{\rm com},5}\right\|_{L_t^1 C_x^{\varepsilon}} +\left\|R^b_{{\rm com}}\right\|_{L_t^1 C_x^{\varepsilon}} \\ 
&\lesssim \left\|\rho_{l}^{-1} m_{l} \otimes m_{l}-\rho_q^{-1} m_q \otimes m_q\right\|_{L_t^1 C_x^{\varepsilon}}+\left\|\rho_q^{-1} m_q \otimes m_q-\left(\rho_q^{-1} m_q \otimes m_q\right) *_x \phi_{l} *_t \varphi_{l}\right\|_{L_t^1 C_x^{\varepsilon}}\\
&\quad+\left(\left\|b_{l} \otimes b_{l}-b_q \otimes b_q\right\|_{L_t^1 C_x^{\varepsilon}}+\left\||b_l|^2\mathbb{I}-|b_q|^2\mathbb{I}\right\|_{L_t^1 C_x^{\varepsilon}}\right)\\
&\quad+\left(\left\|b_q \otimes b_q-\left(b_q \otimes b_q\right) *_x \phi_{l} *_t \varphi_{l}\right\|_{L_t^1 C_x^{\varepsilon}}+\left\||b_q|^2\mathbb{I}-\left(|b_q|^2\mathbb{I}\right) *_x \phi_{l} *_t \varphi_{l}\right\|_{L_t^1 C_x^{\varepsilon}}\right)\\
&\quad+\left(\left\|\rho_{l}^{-1} b_{l} \otimes m_{l}-\rho_q^{-1} b_q \otimes m_q\right\|_{L_t^1 C_x^{\varepsilon}}+\left\|\rho_{l}^{-1} m_{l} \otimes b_{l}-\rho_q^{-1} m_q \otimes b_q\right\|_{L_t^1 C_x^{\varepsilon}}\right)\\
&\quad+\left(\left\|\rho_q^{-1} b_q \otimes m_q-\left(\rho_q^{-1} b_q \otimes m_q\right) *_x \phi_{l} *_t \varphi_{l}\right\|_{L_t^1 C_x^{\varepsilon}}\right.\\
&\qquad+\left.\left\|\rho_q^{-1} m_q \otimes b_q-\left(\rho_q^{-1} m_q \otimes b_q\right) *_x \phi_{l} *_t \varphi_{l}\right\|_{L_t^1 C_x^{\varepsilon}}\right)\\
&=:K_{11}+K_{12}+K_{21}+K_{22}+K_{31}+K_{32}.
\end{aligned}
\end{equation}
by a direct calculation we have
\begin{equation}\label{eq4.87}
\begin{aligned}
K_{11} & \lesssim\left\|\rho_{l}^{-1} m_{l} \otimes\left(m_{l}-m_q\right)\right\|_{L_t^1 C_x^{\varepsilon}}+\left\|\rho_{l}^{-1}\left(m_{l}-m_q\right) \otimes m_q\right\|_{L_t^1 C_x^{\varepsilon}}+\left\|\left(\rho_{l}^{-1}-\rho_q^{-1}\right) m_q \otimes m_q\right\|_{L_t^1 C_x^{\varepsilon}} \\
& \lesssim\left\|\rho_{l}^{-1}\right\|_{C_t C_x^{\varepsilon}}\left\|m_{l}-m_q\right\|_{L_t^1 C_x^{\varepsilon}}\left(\left\|m_{l}\right\|_{C_t C_x^{\varepsilon}}+\left\|m_q\right\|_{C_t C_x^{\varepsilon}}\right)+\left\|\rho_{l}^{-1}-\rho_q^{-1}\right\|_{C_t C_x^{\varepsilon}}\left\|m_q\right\|_{L_t^2 C_x^{\varepsilon}}^2 .
\end{aligned}
\end{equation}
Next, by interpolation inequality, \eqref{eq2.22}, \eqref{eq2.23} and \eqref{eq2.25},
\begin{equation}\label{eq4.88}
    \begin{aligned}
\left\|m_{l}-m_q\right\|_{L_t^1 C_x^{\varepsilon}} & \lesssim\left\|m_{l}-m_q\right\|_{C_{t, x}}^{1-\varepsilon}\left\|m_{l}-m_q\right\|_{C_{t, x}^1}^{\varepsilon} \\
& \lesssim\left(l \lambda_q^4\right)^{1-\varepsilon}\left(l \lambda_q^6\right)^{\varepsilon} \lesssim l \lambda_q^{4+2 \varepsilon},
\end{aligned}
\end{equation}
and
\begin{equation}\label{eq4.89}
    \begin{aligned}
\left\|\rho_{l}^{-1}-\rho_q^{-1}\right\|_{C_t C_x^{\varepsilon}} & \lesssim\left\|\rho_{l}^{-1}\right\|_{C_t C_x^{\varepsilon}}\left\|\rho_q^{-1}\right\|_{C_t C_x^{\varepsilon}}\left\|\rho_{l}-\rho_q\right\|_{C_t C_x^{\varepsilon}} \\
& \lesssim\left(\lambda_q^{\frac{\varepsilon^2}{4}}\right)^2\left\|\rho_{l}-\rho_q\right\|_{C_{t, x}}^{1-\varepsilon}\left\|\rho_{l}-\rho_q\right\|_{C_t C_x^1}^{\varepsilon} \lesssim l \lambda_q^{\varepsilon} .
\end{aligned}
\end{equation}
Thus, plugging \eqref{eq4.88} and \eqref{eq4.89} into \eqref{eq4.87} and using \eqref{eq4.42} and \eqref{eq4.45} leads that
\begin{equation}\label{eq4.90}
    K_{11}\lesssim\lambda_q^{\frac{\varepsilon^2}{4}}l \lambda_q^{4+2 \varepsilon}\lambda^{4}_q+l \lambda_q^{\varepsilon}\lambda^8_q\lesssim l\lambda_q^9\lesssim l^{\frac{1}{2}}.
\end{equation}
In a similar fashion as $K_{11}$, we can estimate that
\begin{align}
    &K_{21}\lesssim l \lambda_q^{4+2 \varepsilon}\lambda^{4}_q+l \lambda^8_q\lesssim l\lambda_q^9\lesssim l^{\frac{1}{2}},\label{eq4.91}\\
    &K_{31}\lesssim\lambda_q^{\frac{\varepsilon^2}{4}}l \lambda_q^{4+2 \varepsilon}\lambda^{4}_q+l \lambda_q^{\varepsilon}\lambda^8_q\lesssim l\lambda_q^9\lesssim l^{\frac{1}{2}}.\label{eq4.92}
\end{align}
On the other hand, using \eqref{eq2.6}, \eqref{eq2.7} and interpolation we obtain
\begin{equation}\label{eq4.93}
\begin{aligned}
K_{12} & \lesssim\left\|\rho_q^{-1} m_q \otimes m_q-\left(\rho_q^{-1} m_q \otimes m_q\right) *_x \phi_{l} *_t \varphi_{l}\right\|_{C_{t, x}}^{1-\varepsilon}\\
&\quad\times\left\|\rho_q^{-1} m_q \otimes m_q-\left(\rho_q^{-1} m_q \otimes m_q\right) *_x \phi_{l} *_t \varphi_{l}\right\|_{C_t C_x^1}^{\varepsilon} \\
& \lesssim\left(l\left\|\rho_q^{-1} m_q \otimes m_q\right\|_{C_{t, x}^1}\right)^{1-\varepsilon}\left(l\left\|\rho_q^{-1} m_q \otimes m_q\right\|_{C_{t, x}^2}\right)^{\varepsilon} \\
& \lesssim\left(l\left\|\rho_q^{-1}\right\|_{C_{t, x}^1}\left\|m_q\right\|_{C_{t, x}^1}^2\right)^{1-\varepsilon}\left(l\left\|\rho_q^{-1}\right\|_{C_{t, x}^2}^2\left\|m_q\right\|_{C_{t, x}}^2\right)^{\varepsilon} \\
& \lesssim\left(l \lambda_q^{\frac{\varepsilon}{4}} \lambda_q^8\right)^{1-\varepsilon}\left(l \lambda_q^{\frac{\varepsilon}{2}} \lambda_q^{12}\right)^{\varepsilon} \lesssim l \lambda_q^9 \lesssim l^{\frac{1}{2}}.
\end{aligned}
\end{equation}
By a similar fashion as $K_{12}$ again, we can estimate that
\begin{align}
    &K_{22}\lesssim\left(l \lambda_q^8\right)^{1-\varepsilon}\left(l  \lambda_q^{12}\right)^{\varepsilon} \lesssim l \lambda_q^9 \lesssim l^{\frac{1}{2}},\label{eq4.94}\\
    &K_{32}\lesssim\left(l \lambda_q^{\frac{\varepsilon}{4}} \lambda_q^8\right)^{1-\varepsilon}\left(l \lambda_q^{\frac{\varepsilon}{2}} \lambda_q^{12}\right)^{\varepsilon} \lesssim l \lambda_q^9 \lesssim l^{\frac{1}{2}}.\label{eq4.95}
\end{align}
Thus, plugging \eqref{eq4.90}--\eqref{eq4.95} into \eqref{eq4.86}, we derive that
\begin{equation}\label{eq4.96}
    \left\|R^m_{{\rm com},4}\right\|_{L_t^1 C_x^{\varepsilon}}+\left\|R^m_{{\rm com},5}\right\|_{L_t^1 C_x^{\varepsilon}} +\left\|R^b_{{\rm com}}\right\|_{L_t^1 C_x^{\varepsilon}}\lesssim l^{\frac{1}{2}},
\end{equation}
which together with \eqref{eq4.83}--\eqref{eq4.85} gives that
\begin{equation}\label{eq4.97}
    \left\|R^m_{{\rm com}}\right\|_{L_t^1 C_x^{\varepsilon}} +\left\|R^b_{{\rm com}}\right\|_{L_t^1 C_x^{\varepsilon}}\lesssim l^{\frac{1}{2}}.
\end{equation}

Finally, combining \eqref{eq4.69}, \eqref{eq4.74}, \eqref{eq4.75}, \eqref{eq4.78} and \eqref{eq4.97} altogether and using \eqref{eq2.3s} we conclude that
\begin{equation}\label{eq4.98}
    \begin{aligned}
       &\quad\|R^m_{q+1}\|_{L^1_tC_x}+\|R^b_{q+1}\|_{L^1_tC_x}\\
       &\lesssim \|R^m_{{\rm lin}}\|_{L^1_tC_x^{\varepsilon}}+\|R^m_{{\rm osc}}\|_{L^1_tC_x^{\varepsilon}}+\|R^m_{{\rm cor}}\|_{L^1_tC_x^{\varepsilon}}+\|R^m_{{\rm pre}}\|_{L^1_tC_x^{\varepsilon}}+\|R^m_{{\rm com}}\|_{L^1_tC_x^{\varepsilon}}\\
       &\quad+\|R^b_{{\rm lin}}\|_{L^1_tC_x^{\varepsilon}}+\|R^b_{{\rm osc}}\|_{L^1_tC_x^{\varepsilon}}+\|R^b_{{\rm cor}}\|_{L^1_tC_x^{\varepsilon}}+\|R^b_{{\rm com}}\|_{L^1_tC_x^{\varepsilon}}\\
       &\lesssim l^{-27}\lambda_{q+1}^{-9\varepsilon}+l^{-29}\lambda_{q+1}^{-11\varepsilon}+l^{-24}\lambda_{q+1}^{-14\varepsilon}+l^{-29}\lambda_{q+1}^{-13\varepsilon}+l^{\frac{1}{2}}\\
       &\lesssim\delta_{q+2},
    \end{aligned}
\end{equation}
which means that the inductive estimate \eqref{eq2.9} is verified at level $q+1$.
\section{Proof of main results}\label{sec5}
We are now in the stage to prove the main results, namely Theorems \ref{thm-nonuniquenes-1}, \ref{thm-nonuniqueness-2}, \ref{thm-vanishing limit} and \ref{thm-main-iteration}. To begin, we prove the main iteration result Theorem \ref{thm-main-iteration}.
\subsection{Proof of Theorem \ref{thm-main-iteration}}
Since the iterative estimates \eqref{eq2.5}--\eqref{eq2.13} have been verified in the previous sections, we only need to check the inductive estimate \eqref{eq2.15} for the temporal support.

First, by the definition of the perturbations and $T_q$, we have
\begin{equation}\label{eq5.1}
    {\rm supp}_t (w_{q+1},d_{q+1})\subseteq\bigcup_{k\in\Lambda_m\cup\Lambda_b}{\rm supp}_t(a_{(k)})\subseteq N_{2l}([T_q,T]),
\end{equation}
and
\begin{equation}\label{eq5.2}
    {\rm supp}_tz_{q+1}\subseteq N_{2l}([T_q,T]).
\end{equation}
Furthermore, by \eqref{eq3.56} and \eqref{eq3.58},
\begin{equation}\label{eq5.3}
    \begin{aligned}
        {\rm supp}_t(\nabla\rho_{q+1},m_{q+1},b_{q+1}-b_*)
        &\subseteq{\rm supp}_t(\nabla\rho_l,m_l,b_l-b_*)\cup{\rm supp}_t(\nabla z_{q+1},w_{q+1},d_{q+1})\\
        &\subseteq N_{2l}([T_q,T]).
    \end{aligned}
\end{equation}

Next, for the Reynolds and magnetic stresses, by \eqref{eq4.5} and \eqref{eq4.12}, we derive
\begin{equation}\label{eq5.4}
    \begin{aligned}
        {\rm supp}_t(R^m_{q+1},R^b_{q+1})
        &\subseteq\bigcup_{k\in\Lambda_m\cup\Lambda_b}{\rm supp}_t(a_{(k)},m_l,b_l-b_*,\nabla\rho_{q+1},\nabla\rho_l)\cup N_l({\rm supp}_t(R^m_q,R^b_q))\\
        &\subseteq\bigcup_{k\in\Lambda_m\cup\Lambda_b}{\rm supp}_t(a_{(k)},m_l,b_l-b_*,\nabla z_{q+1},\nabla\rho_l)\cup N_l({\rm supp}_t(R^m_q,R^b_q))\\
        &\subseteq N_{2l}([T_q,T]).
    \end{aligned}
\end{equation}
Therefore, combining \eqref{eq5.3} and \eqref{eq5.4} altogether we have
\begin{equation}
    {\rm supp}_t(\nabla\rho_{q+1},m_{q+1},b_{q+1}-b_*,R^m_{q+1},R^b_{q+1})\subseteq N_{2l}([T_q,T]),
\end{equation}
which along with the fact that $2l\ll\delta_{q+2}^{\frac{1}{2}}$ give that \eqref{eq2.15}. Now the proof of Theorem \ref{thm-main-iteration} is complete. \hfill$\square$
\subsection{Proof of Theorem \ref{thm-nonuniquenes-1}}
We prove the statements $(i)$-$(v)$ in Theorem \ref{thm-nonuniquenes-1} below.

$(i)$. Let $\rho_0=\widetilde{\rho}$, $m_0=\widetilde{m}$, $b_0=\widetilde{b}$ and
\begin{align}
    &\begin{aligned}
        R^m_0
        &:=\mathcal{R}^m\left(\partial_t\widetilde{m}+\nu^s(-\Delta)^{\alpha_1}(\widetilde{\rho}^{-1}\widetilde{m})-(\nu^b+\frac{1}{3}\nu^s)\nabla{\rm div}(\widetilde{\rho}^{-1}\widetilde{m})+\nabla P(\widetilde{\rho})\right)\\
        &\quad+\widetilde{\rho}^{-1}\widetilde{m}\otimes\widetilde{m}-\mu\left(\widetilde{b}\otimes\widetilde{b}-\frac{1}{2}|\widetilde{b}|^2\mathbb{I}\right),
    \end{aligned}\label{eq5.6}\\
    &  R^b_0:=\mathcal{R}^b\left(\partial_t\widetilde{b}+\eta\mu^{-1}(-\Delta)^{\alpha_2}\widetilde{b}\right)+\widetilde{\rho}^{-1}\left(\widetilde{b}\otimes\widetilde{m}-\widetilde{m}\otimes\widetilde{b}\right). \label{eq5.7}
\end{align}
Since $(\widetilde{\rho},\widetilde{m})$ is a smooth solution to the transport equation $\eqref{eq1.4}_1$, $\widetilde{b}$ is divergence-free and the identities \eqref{eq4.3}, we derive that $(\rho_0,m_0,b_0,R^m_0,R^b_0)$ is a relaxed solution to \eqref{eq2.1}. Moreover, for $a$ large enough, the inductive estimates \eqref{eq2.5}--\eqref{eq2.9} are all satisfied at level $q=0$. Thus, by Theorem \ref{thm-main-iteration}, there exists a sequence of relaxed solutions $\{(\rho_q,m_q,b_q,R^m_q,R^b_q)\}_{q\geq0}$ to \eqref{eq2.1}, which satisfy \eqref{eq2.5}--\eqref{eq2.15} for any $q\geq0$.

First, concerning the density, using \eqref{eq2.11} we have that for $a$ large enough,
\begin{equation}\label{eq5.8}
    \sum_{q \geq 0}\left\|\rho_{q+1}-\rho_q\right\|_{C_t C_x^1} \lesssim \sum_{q \geq 0} \delta_{q+2}^{\frac{1}{2}}=
a^{\frac{3\beta}{2}b}
\sum_{q\geq0}a^{-\beta b^{q+2}}
\leq \varepsilon_*.
\end{equation}
It means that $\{\rho_q\}_{q\geq0}$ is a Cauchy sequence in the space $C_tC^1_x$, that is, there exists $\rho\in C_tC^1_x$ such that
\begin{equation}\label{eq5.9}
    \lim_{q\rightarrow+\infty}\rho_q=\rho\text{ in }C_tC^1_x,
\end{equation}
which along with \eqref{eq2.5} yields that
\begin{equation}\label{eq5.10}
    \lim_{q\rightarrow+\infty}\rho^{-1}_q=\rho^{-1}\text{ in }C_{t,x}.
\end{equation}

Next, for the momentum and magnetic field, using interpolation inequality, \eqref{eq2.4}, \eqref{eq2.7} and \eqref{eq2.12} we derive that for any $0<\beta^{\prime}<\beta/(4+\beta)$,
\begin{equation}\label{eq5.11}
\begin{aligned}
\sum_{q \geq 0}\left\|m_{q+1}-m_q\right\|_{H_t^{\beta^{\prime}} C_x} & \leq \sum_{q \geq 0}\left\|m_{q+1}-m_q\right\|_{L_t^2 C_x}^{1-\beta^{\prime}}\left\|m_{q+1}-m_q\right\|_{C_{t, x}^1}^{\beta^{\prime}} \\
& \lesssim \sum_{q \geq 0} \delta_{q+1}^{\frac{1-\beta^{\prime}}{2}} \lambda_{q+1}^{4 \beta^{\prime}} \\
& \lesssim \delta_1^{\frac{1-\beta^{\prime}}{2}} \lambda_1^{4 \beta^{\prime}}+\sum_{q \geq 1} \lambda_{q+1}^{-\beta\left(1-\beta^{\prime}\right)+4 \beta^{\prime}}<\infty,
\end{aligned}
\end{equation}
and
\begin{equation}\label{eq5.12}
\begin{aligned}
\sum_{q \geq 0}\left\|b_{q+1}-b_q\right\|_{H_t^{\beta^{\prime}} C_x} & \leq \sum_{q \geq 0}\left\|b_{q+1}-b_q\right\|_{L_t^2 C_x}^{1-\beta^{\prime}}\left\|b_{q+1}-b_q\right\|_{C_{t, x}^1}^{\beta^{\prime}} \\
& \lesssim \sum_{q \geq 0} \delta_{q+1}^{\frac{1-\beta^{\prime}}{2}} \lambda_{q+1}^{4 \beta^{\prime}} \\
& \lesssim \delta_1^{\frac{1-\beta^{\prime}}{2}} \lambda_1^{4 \beta^{\prime}}+\sum_{q \geq 1} \lambda_{q+1}^{-\beta\left(1-\beta^{\prime}\right)+4 \beta^{\prime}}<\infty,
\end{aligned}
\end{equation}
where we use $-\beta(1-\beta^{\prime})+4\beta^{\prime}<0$ in the last inequalities of both \eqref{eq5.11} and \eqref{eq5.12}. Similarly, by \eqref{eq2.13} we also have
\begin{equation}\label{eq5.13}
    \sum_{q \geq 0}\left\|m_{q+1}-m_q\right\|_{C_tH^{-1}_x}<\infty\quad\text{and}\quad\sum_{q \geq 0}\left\|b_{q+1}-b_q\right\|_{C_tH^{-1}_x}<\infty.
\end{equation}
Thus, $\{m_q\}_{q\geq0}$ and $\{b_q\}_{q\geq0}$ are Cauchy sequences in the space $H^{\beta^{\prime}}_tC_x\cap C_tH^{-1}_x$, that is there exist $m,b\in H^{\beta^{\prime}}_tC_x\cap C_tH^{-1}_x$ such that
\begin{equation}\label{eq5.14}
    \lim_{q\rightarrow+\infty}m_q=m\quad\text{and }\lim_{q\rightarrow+\infty}b_q=b\text{ in }H^{\beta^{\prime}}_tC_x\cap C_tH^{-1}_x.
\end{equation}
Furthermore, by \eqref{eq5.10} and \eqref{eq5.14}, we derive that
\begin{equation}\label{eq5.15}
\left\{
    \begin{aligned}
        &\lim_{q\rightarrow+\infty}\rho^{-1}_qm_q=\rho^{-1}m\text{ in }L^2_tC_x,\\
        &\lim_{q\rightarrow+\infty}\rho^{-1}_qm_q\otimes m_q=\rho^{-1}m\otimes m\text{ in }L^1_tC_x,\\
        &\lim_{q\rightarrow+\infty}\left(b_q\otimes b_q-\frac{1}{2}|b_q|^2\mathbb{I}\right)=b\otimes b-\frac{1}{2}|b|^2\mathbb{I}\text{ in }L^1_tC_x,\\
        &\lim_{q\rightarrow+\infty}\rho^{-1}_q\left(b_q\otimes m_q-m_q\otimes b_q\right)=\rho^{-1}\left(b\otimes m-m\otimes b\right)\text{ in }L^1_tC_x,\\
        &m(t)\rightharpoonup f^m_0 \text{ weakly in }H^{-1}(\mathbb{T}^3)\text{ as }t\rightarrow0,\\
        &b(t)\rightharpoonup f^b_0 \text{ weakly in }H^{-1}(\mathbb{T}^3)\text{ as }t\rightarrow0,
    \end{aligned}
    \right.
\end{equation}
for some $f^m_0,f^b_0\in H^{-1}(\mathbb{T}^3)$.

On the other hand, by mean-valued theorem, \eqref{eq5.9} and the continuity of $P$, $P^{\prime}$ and $P^{\prime\prime}$,
\begin{equation}\label{eq5.16}
    \lim_{q\rightarrow+\infty}P(\rho_q)=P(\rho)\text{ in }C_{t,x}.
\end{equation}

Thus, taking into account \eqref{eq2.9},
\begin{equation}\label{eq5.17}
    \lim_{q\rightarrow+\infty}R^m_q=0\quad\text{and }\lim_{q\rightarrow+\infty}R^b_q=0\text{ in }L^1_tC_x.
\end{equation}
We thus conclude that $(\rho,m,b)$ is a weak solution to \eqref{eq1.4}.

$(ii)$. We need only to check
\begin{equation}\label{eq5.18}
    m,b\in L^p_tC^s_x.
\end{equation}
Using \eqref{eq2.13}, by a similar technique of \eqref{eq5.11} and \eqref{eq5.12} we get
\begin{equation}\label{eq5.19}
    \sum_{q\geq0}\|m_{q+1}-m_q\|_{L^p_tC_x^s}<\infty\quad\text{and}\quad\sum_{q\geq0}\|b_{q+1}-b_q\|_{L^p_tC_x^s}<\infty,
\end{equation}
which means that $\{m_q\}_{q\geq0}$ and $\{b_q\}_{q\geq0}$ are both Cauchy sequences in $L^p_tC_x^s$. Hence, by the uniqueness of weak limits, we obtain \eqref{eq5.18} which together with \eqref{eq5.14} leads the regularity statement $(ii)$.

$(iii)$. By \eqref{eq2.10},
\begin{equation}\label{eq5.20}
    \int_{\mathbb{T}^3}\rho_q(t,x){\rm d}x=\int_{\mathbb{T}^3}\rho_0(t,x){\rm d}x=\int_{\mathbb{T}^3}\widetilde{\rho}(t,x){\rm d}x,
\end{equation}
for any $q\in\mathbb{N}$ and $t\in[0,T]$. Then using \eqref{eq5.9} to pass to the limit $q\rightarrow+\infty$, we get
\begin{equation}\label{eq5.21}
    \int_{\mathbb{T}^3}\rho(t,x){\rm d}x=\int_{\mathbb{T}^3}\widetilde{\rho}(t,x){\rm d}x,\quad\forall t\in[0,T],
\end{equation}
which means that the mass preservation statement $(iii)$ is valid.

$(iv)$. The small deviation between $\rho$ and $\widetilde{\rho}$ in $C_tC^1_x$ has been implied by \eqref{eq5.8}. Next, using \eqref{eq2.13} and estimating as in \eqref{eq5.11} and \eqref{eq5.12} we obtain
\begin{equation}\label{eq5.22}
\begin{aligned}
& \|m-\widetilde{m}\|_{L_t^1 C_x}+\|m-\widetilde{m}\|_{L_t^p C_x^s}+\|m-\widetilde{m}\|_{C_t H_x^{-1}} \\
\leq & \sum_{q \geq 0}\left(\left\|m_{q+1}-m_q\right\|_{L_t^1 C_x}+\left\|m_{q+1}-m_q\right\|_{L_t^p C_x^s}+\left\|m_{q+1}-m_q\right\|_{C_t H_x^{-1}}\right) \\
\leq & 2 \sum_{q \geq 0} \delta_{q+2}^{\frac{1}{2}} \leq \varepsilon_*,
\end{aligned}
\end{equation}
and
\begin{equation}\label{eq5.23}
\begin{aligned}
& \|b-\widetilde{b}\|_{L_t^1 C_x}+\|b-\widetilde{b}\|_{L_t^p C_x^s}+\|b-\widetilde{b}\|_{C_t H_x^{-1}} \\
\leq & \sum_{q \geq 0}\left(\left\|b_{q+1}-b_q\right\|_{L_t^1 C_x}+\left\|b_{q+1}-b_q\right\|_{L_t^p C_x^s}+\left\|b_{q+1}-b_q\right\|_{C_t H_x^{-1}}\right) \\
\leq & 2 \sum_{q \geq 0} \delta_{q+2}^{\frac{1}{2}} \leq \varepsilon_*,
\end{aligned}
\end{equation}
for $a$ sufficiently large.

Now we concern the small deviation of relative magnetic helicity between $b$ and $\widetilde{b}$. by a direct calculation,
\begin{equation}\label{eq5.24}
    \|\mathcal{H}_{\rm rel}(b\mid \bar b)-\mathcal{H}_{\rm rel}(\widetilde{b}\mid \bar {\widetilde{b}})\|_{L^1_t}\leq\|b^{\diamond}-\widetilde{b}^{\diamond}\|_{L^1_tC_x}\|\widehat{\widetilde{a}}\|_{C_{t,x}}+\|\widehat{a}-\widehat{\widetilde{a}}\|_{C_tL^2_x}\|b^{\diamond}\|_{L^1_tC_x}.
\end{equation}
Furthermore, using the Biot-Savart's law:
\begin{align*}
       \widehat{a}={\rm curl}(-\Delta)^{-1}b^{\diamond}\quad\text{and}\quad\widehat{\widetilde{a}}={\rm curl}(-\Delta)^{-1}\widetilde{b}^{\diamond},
\end{align*}
that is, by \eqref{eq5.22} and \eqref{eq5.23},
\begin{equation}\label{eq5.25}
    \left\{
    \begin{aligned}
        &\|b^{\diamond}-\widetilde{b}^{\diamond}\|_{L^1_tC_x}\leq\|b-\widetilde{b}\|_{L^1_tC_x}+\|\Bar{b}-\Bar{\widetilde{b}}\|_{L^1_t}\leq \frac{1}{2}\widetilde{C_1}^{-1}\varepsilon_*,\\
        &\|\widehat{\widetilde{a}}\|_{C_{t,x}}=\|{\rm curl}(-\Delta)^{-1}\widetilde{b}^{\diamond}\|_{C_{t,x}}\leq\widetilde{C_1},\\
        &\|\widehat{a}-\widehat{\widetilde{a}}\|_{C_tL^2_x}=\|{\rm curl}(-\Delta)^{-1}(b^{\diamond}-\widetilde{b}^{\diamond})\|_{C_tL^2_x}\leq\|b^{\diamond}-\widetilde{b}^{\diamond}\|_{C_tH^{-1}_x}\leq\frac{1}{2}\widetilde{C_2}^{-1}\varepsilon_*,\\
        &\|b^{\diamond}\|_{L^1_tC_x}\leq\|\widetilde{b}^{\diamond}\|_{L^1_tC_x}+\|b^{\diamond}-\widetilde{b}^{\diamond}\|_{L^1_tC_x}\leq \widetilde{C_2},
    \end{aligned}
    \right.
\end{equation}
for $a$ sufficiently large.

Thus, plugging \eqref{eq5.25} into \eqref{eq5.24} yields the small deviation of magnetic helicity between $b$ and $\widetilde{b}$, which verified the statement $(iv)$.

$(v)$. Finally, for the temporal supports, by \eqref{eq5.9} and \eqref{eq5.14}, we obtain
\begin{equation}\label{eq5.26}
    {\rm supp}_t(\nabla\rho,m,b-b_*)\subseteq N_{\sum_{q\geq0}\delta_{q+2}^{\frac{1}{2}}}([T_*,T])\subseteq N_{\varepsilon_*}([\widetilde{T},T]),
\end{equation}
for $a$ is large enough and the fact that $\sum_{q\geq0}\delta_{q+2}^{\frac{1}{2}}\leq\varepsilon_*$.

Therefore, we have finished the proof of Theorem \ref{thm-nonuniquenes-1}.
\hfill$\square$
\subsection{Proof of Theorem \ref{thm-nonuniqueness-2}}
 Without loss of generality, we choose $\widetilde{\rho}=1$ and  non-trivial divergence-free, mean-free vector fields $\widetilde{m}$ and $\widetilde{b}$:
 \begin{equation}\label{eq5.27}
     \begin{aligned}
         &\widetilde{m}:=\Psi(t)({\rm sin}x_3,0,0)^{{\rm T}},\\
         &\widetilde{b}:=\Psi(t)({\rm sin}x_3,{\rm cos}x_3,0)^{{\rm T}},
     \end{aligned}
 \end{equation}
 where $\Psi(t)$ is any cut-off function such that ${\rm supp}_t(\Psi(t))\subseteq[\frac{T}{4},\frac{3T}{4}]$, $\Psi\equiv1$ on $[\frac{5T}{16},\frac{11T}{16}]$ and $0\leq\Psi(t)\leq1$ on $[0,T]$.
 Next, for every $j\geq1$, let
 \begin{equation}\label{eq5.28}
     \widetilde{m}_j:=\frac{j}{c_0}\widetilde{m}\quad\text{and}\quad\widetilde{b}_j:=\frac{j}{c_1}\widetilde{b}+(0,0,1)^{{\rm T}},
 \end{equation}
 where $j\in\mathbb{N}_+$, $c_0:=\|\widetilde{m}\|_{L^1(0,T;C_x)}$ and $c_1:=\|\widetilde{b}\|_{L^1(0,T;C_x)}$. Thus $(\widetilde{\rho},\widetilde{m}_j)$ are solutions to the transport equation \eqref{eq1.6s} and $\widetilde{b}_j$ are divergence-free.

 Let $0<\varepsilon_*<{\rm min}\{\frac{1}{8},\frac{T}{8}\}$. Then for every $j\geq1$, Theorem \ref{thm-nonuniquenes-1} gives a weak solutions to \eqref{eq1.4} such that
 \begin{equation}\label{eq5.29}
 \left\{
     \begin{aligned}
         &\|m_j-\widetilde{m}_j\|_{L^1(0,T;C_x)}\leq\varepsilon_*,\\
         &\|b_j-\widetilde{b}_j\|_{L^1(0,T;C_x)}\leq\varepsilon_*,\\
         &\|\mathcal{H}_{\rm rel}(b_j\mid \bar b_j)-\mathcal{H}_{\rm rel}(\widetilde{b_j}\mid \bar {\widetilde{b_j}})\|_{L^1(0,T)}\leq\varepsilon_*,
     \end{aligned}
     \right.
 \end{equation}
 and
 \begin{align}
     &\int_{\mathbb{T}^3}\rho_j(t,x){\rm d}x=\int_{\mathbb{T}^3}\widetilde{\rho}(t,x){\rm d}x,\quad\forall t\in[0,T],\label{eq5.30}\\
     &{\rm supp}_t(\nabla\rho_j,m_j,b_j-b_*)\subseteq[\frac{T}{8},T],\label{eq5.31}
 \end{align}
 where $b_*=(0,0,1)^{{\rm T}}$.
 
 Thus, by \eqref{eq5.31}, $\rho_j$ is independent of the spatial variable for $t\in[0,\frac{T}{8}]$, that is, we derive that
 \begin{equation}
     \rho_j(0)=\fint_{\mathbb{T}^3}\rho_j(0,x){\rm d}x=\fint_{\mathbb{T}^3}\widetilde{\rho}_j(0,x){\rm d}x=1.
 \end{equation}
 It means that $(\rho_j,m_j,b_j)$ are weak solutions to \eqref{eq1.4} with the same initial datum $(\rho_j(0),m_j(0),b_j(0))=(1,(0,0,0)^T,(0,0,1)^T)$.

Next, using \eqref{eq5.28} and \eqref{eq5.29}, we derive that for $j\neq k$,
\begin{align*}
    \|m_j-m_k\|_{L^1(0,T;C_x)}
    &\geq\|\widetilde{m}_j-\widetilde{m}_k\|_{L^1(0,T;C_x)}-\|m_j-\widetilde{m}_j\|_{L^1(0,T;C_x)}-\|m_k-\widetilde{m}_k\|_{L^1(0,T;C_x)}\\
    &\geq\frac{|j-k|}{c_0}\|\widetilde{m}\|_{L^1(0,T;C_x)}-2\varepsilon_*\\
    &=|j-k|-2\varepsilon_*>\frac{1}{2},
\end{align*}
and
\begin{align*}
    \|b_j-b_k\|_{L^1(0,T;C_x)}
    &\geq\|\widetilde{b}_j-\widetilde{b}_k\|_{L^1(0,T;C_x)}-\|b_j-\widetilde{b}_j\|_{L^1(0,T;C_x)}-\|b_k-\widetilde{b}_k\|_{L^1(0,T;C_x)}\\
    &\geq\frac{|j-k|}{c_1}\|\widetilde{b}\|_{L^1(0,T;C_x)}-2\varepsilon_*\\
    &=|j-k|-2\varepsilon_*>\frac{1}{2},
\end{align*}
which means that $m_j\neq m_k$ and $b_j\neq b_k$ on $[0,T]$ for every $j\neq k$.

Concerning the relative magnetic helicity,
on the one hand, we have 
\begin{equation}\label{eq5.33}
 \mathcal{H}_{\rm rel}(b_j\mid \bar b_j)(0)=0,\quad\forall j\geq1,
\end{equation}
because of $b_j(0)=(0,0,1)^{{\rm T}}$. On the other hand, the vector potential $\widehat{{\widetilde{a}}}_j$ corresponding to  ${\widetilde{b}}^{\diamond}_j$ can be computed explicitly by
\begin{align*}
    \widehat{\widetilde{a}}_j=\frac{j}{c_1}\Psi(t)({\rm sin}x_3,{\rm cos}x_3,0)^{{\rm T}}.
\end{align*}
This leads that the relative magnetic helicity of $(\rho_j,m_j,b_j)$,
\begin{align*}
    \mathcal{H}_{\rm rel}({\widetilde{b}}_j\mid \bar {\widetilde{b}}_j)(t)=\frac{j^2}{c_1^2}\Psi^2(t)(2\pi)^3,
\end{align*}
and then by the small deviation of relative magnetic helicity \eqref{eq5.29}, for $\varepsilon_*<\frac{Tj^2}{8c_1^2}(2\pi)^3$, we have
\begin{align*}
    \|\mathcal{H}_{\rm rel}(b_j\mid \bar b_j)\|_{L^1_t(\frac{5T}{16},\frac{11T}{16})}\geq\frac{3Tj^2}{8c_1^2}(2\pi)^3-\varepsilon_*>0,
\end{align*}
which combining with \eqref{eq5.33} gives that the relative magnetic helicity $\mathcal{H}_{\rm rel}(b_j\mid \bar b_j)$ is not conserved for every $j\geq1$.

Therefore, the proof of Theorem \ref{thm-nonuniqueness-2} is complete.
\hfill$\square$
\subsection{Proof of Theorem \ref{thm-vanishing limit}}Let $\{\phi_{\varepsilon}\}_{\varepsilon>0}$ and $\{\varphi_{\varepsilon}\}_{\varepsilon>0}$ be two families of standard compactly supported mollifiers on $\mathbb{T}^3$ and $[-T,T]$, respectively. For every $n\geq1$, set
\begin{equation}\label{eq5.34}
    \begin{aligned}     &\rho_n:=\left(\rho*_x\phi_{\lambda_n^{-1}}\right)*_t\varphi_{\lambda_n^{-1}},\quad P_n:=\left(P(\rho)*_x\phi_{\lambda_n^{-1}}\right)*_t\varphi_{\lambda_n^{-1}},\\     &m_n:=\left(m*_x\phi_{\lambda_n^{-1}}\right)*_t\varphi_{\lambda_n^{-1}},\quad b_n:=\left(b*_x\phi_{\lambda_n^{-1}}\right)*_t\varphi_{\lambda_n^{-1}},
    \end{aligned}
\end{equation}
restricted to $[0,T]$, where $\lambda_n:=a^{b^n}$ and $\delta_n:=\lambda^{-2\beta}_n$.

More precisely, we choose $\varepsilon\in\mathbb{Q}_+$ small enough such that
\begin{equation}\label{eq5.35s}
    \varepsilon\leq\frac{1}{20}{\rm min}\left\{1-\alpha,\alpha,\frac{2\alpha}{p}-\alpha-s,\frac{\widetilde{\beta}}{2(1+\widetilde{\beta})},1-\frac{\widetilde{\beta}}{2}\right\},\quad \varepsilon b\in\mathbb{N},
\end{equation}
with $b\in 2\mathbb{N}$ and $\beta>0$ satisfy
\begin{equation}\label{eq5.36s}
    b>\frac{1000}{\varepsilon},\quad0<\beta b^2<{\rm min}\left\{\frac{1}{100},\frac{\widetilde{\beta}}{4}\right\}
\end{equation}
and $a\in\mathbb{N}$ large enough.

Since $(\rho,m,b)$ is a weak solution to the compressible ideal MHD equations \eqref{eq1.5}, we infer that $(\rho_n,m_n,b_n)$ satisfies the following relaxed systems
\begin{equation}\label{eq5.37}
    \left\{
    \begin{aligned}
        &\partial_t\rho_n+{\rm div}m_n=0,\\
        &\partial_tm_n+\kappa_n\nu^s(-\Delta)^{\alpha_1}(\rho^{-1}_nm_n)-\kappa_n(\nu^b+\frac{1}{3}\nu^s)\nabla{\rm div}(\rho^{-1}_nm_n)+\nabla P(\rho_n)\\
        &\qquad+{\rm div}\left(\rho_n^{-1}m_n\otimes m_n-\mu\left(b_n\otimes b_n-\frac{1}{2}|b_n|^2\mathbb{I}\right)\right)={\rm div}R^m_n,\\
        &\partial_tb_n+\kappa_n\eta\mu^{-1}(-\Delta)^{\alpha_2}b_n+{\rm div}\left(\rho_n^{-1}\left(b_n\otimes m_n-m_n\otimes b_n\right)\right)={\rm div}R^b_n,\\
        &{\rm div}b_n=0,
    \end{aligned}
    \right.
\end{equation}
where $\kappa_n=\lambda_n^{-2}$ and the Reynolds and magnetic stresses
\begin{align}
    &   \begin{aligned}
        R^m_n:=
        &\kappa_n\nu^s\mathcal{R}^m(-\Delta)^{\alpha_1}\left(\rho_n^{-1}m_n\right)-\kappa_n(\nu^b+\frac{1}{3}\nu^s)\mathcal{R}^m\nabla{\rm div}\left(\rho_n^{-1}m_n\right)+\mathcal{R}^m\nabla\left(P(\rho_n)-P_n\right)\\
        &+\left(\rho_n^{-1}m_n\otimes m_n-\mu(b_n\otimes b_n-\frac{1}{2}|b_n|^2\mathbb{I})\right)-\\
        &\left(\left(\rho^{-1}m\otimes m-\mu(b\otimes b-\frac{1}{2}|b|^2\mathbb{I})\right)*_x\phi_{\lambda_n^{-1}}\right)*_t\varphi_{\lambda_n^{-1}},
    \end{aligned}\label{eq5.38}\\
    & \begin{aligned}
        R^b_n:
        &=\kappa_n\eta\mu^{-1}\mathcal{R}^b(-\Delta)^{\alpha_2}b_n+\rho_n^{-1}(b_n\otimes m_n-m_n\otimes b_n)\\
    &-\left(\left(\rho^{-1}(b\otimes m-m\otimes b)\right)*_x\phi_{\lambda_n^{-1}}\right)*_t\varphi_{\lambda_n^{-1}}.
    \end{aligned}\label{eq5.39}
\end{align}
\begin{clm}
    For $a$ sufficiently large, $(\rho_n,m_n,b_n,R^m_n,R^b_n)$ satisfy the inductive estimates \eqref{eq2.5}--\eqref{eq2.9} at level $q=n+1$.
\end{clm}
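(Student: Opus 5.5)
We check \eqref{eq2.5}--\eqref{eq2.9} at level $q=n+1$ one by one, using only the $C^{\widetilde{\beta}}_{t,x}$ regularity of $(\rho,m,b)$, the bounds $c_1\le\rho\le c_2$, the mollification scale $\lambda_n^{-1}$, and $\kappa_n=\lambda_n^{-2}$. Since $\lambda_{n+1}=\lambda_n^b$ with $b$ huge, any fixed power of $\lambda_n$ is a tiny power of $\lambda_{n+1}$. This gives a lot of room everywhere except in the decay estimate \eqref{eq2.9}, which is the only place where the restrictions \eqref{eq5.35s}--\eqref{eq5.36s} on $\beta,\varepsilon$ enter.

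\textbf{Pointwise and derivative bounds.} For \eqref{eq2.5}, mollification preserves $c_1\le\rho_n\le c_2$. We take $C_1<c_1$ and $C_2>c_2$ (the universal constants are ours to fix in this theorem), and for $a$ large the margin $\lambda_{n+1}^{-\beta}$ is absorbed. For \eqref{eq2.6}, the standard mollifier bound gives $\|\partial_t^M\nabla^N\rho_n\|_{C_{t,x}}\lesssim\lambda_n^{M+N-\widetilde{\beta}}\|\rho\|_{C^{\widetilde\beta}}$. Since $\lambda_n^{M+N}=\lambda_{n+1}^{(M+N)/b}$ and $b>1000/\varepsilon$, this is $\le\lambda_{n+1}^{N\varepsilon/4}$ when $N\ge1$. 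The $M=1$, $N=0$ case costs only $\lambda_n\ll\lambda_{n+1}^{\varepsilon/4}$, which suffices for how \eqref{eq2.6} is used. The same argument gives \eqref{eq2.7}: $\|m_n\|_{C^N_{t,x}}+\|b_n\|_{C^N_{t,x}}\lesssim\lambda_n^{N}\ll\lambda_{n+1}^{2N+2}$. For \eqref{eq2.8}, each term of $R^m_n,R^b_n$ is a Calder\'on--Zygmund operator applied to, or a product of, mollified quantities. Their $C^1_{t,x}$ norms are bounded by powers $\lambda_n^{C}$ with $C\le 4$, and the composition $P(\rho_n)$ is handled with $P\in C^2$. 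All of these are $\ll\lambda_{n+1}^9$.

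\textbf{The decay estimate \eqref{eq2.9}.} This is the main step. The target is $\delta_{n+2}=\lambda_{n+2}^{-2\beta}\lambda_1^{3\beta}$, or whichever normalization of $\delta$ is used in this section; either way it is at least $\lambda_n^{-2\beta b^2}$ up to constants. We split $R^m_n$ and $R^b_n$ into dissipative parts and commutator parts.

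\emph{Dissipative parts.} We estimate them in $C^\varepsilon_x$ and use the bound $\|\mathcal R^m(-\Delta)^{\alpha_1}f\|_{C^\varepsilon}\lesssim\|f\|_{C^{2\alpha_1-1+2\varepsilon}}$. Together with $\mathcal R^m\nabla{\rm div}$ being Calder\'on--Zygmund, this gives a bound $\kappa_n\lambda_n^{1+\varepsilon}\lesssim\lambda_n^{-1+\varepsilon}$.

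\emph{Commutator parts.} These are $\mathcal R^m\nabla(P(\rho_n)-P_n)$ together with the quadratic commutators $f(u_n)-(f(u))_n$, where $f$ is smooth on the range $\rho\in[c_1,c_2]$. We use the Constantin--E--Titi type estimate
\[
\|f(u_n)-f(u)*\phi*\varphi\|_{C^0}\lesssim\lambda_n^{-2\widetilde\beta}\|u\|_{C^{\widetilde\beta}}^2,
\]
which follows from a second-order Taylor expansion. Interpolating with the $C^1$ bound $\lambda_n^{1-2\widetilde\beta}$ gives a $C^\varepsilon$ bound of $\lambda_n^{-2\widetilde\beta+\varepsilon}$. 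For the $\rho^{-1}$-weighted trilinear terms, the Taylor expansion is done in all three variables $(\rho,m,b)$ jointly.

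\emph{Conclusion.} Altogether,
\[
\|R^m_n\|_{L^1_tC_x}+\|R^b_n\|_{L^1_tC_x}\lesssim\lambda_n^{-\min\{1,2\widetilde\beta\}+\varepsilon}.
\]
The conditions $\varepsilon\le\widetilde\beta/(40(1+\widetilde\beta))$ and $\beta b^2<\widetilde\beta/4$ from \eqref{eq5.35s}--\eqref{eq5.36s} make this $\le\lambda_n^{-2\beta b^2}\lesssim\delta_{n+2}$ for $a$ large. The obstacle I expect is purely bookkeeping in this step: checking that the exponent loss $\varepsilon$ and the power $\lambda_1^{3\beta}$ in $\delta$ stay below the gain $\widetilde\beta$. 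If the $C^0$ commutator gain turns out to be only $\lambda_n^{-\widetilde\beta}$, for instance because of the pressure term, then $\widetilde\beta-\varepsilon>\widetilde\beta/2>2\beta b^2$ still closes the argument.
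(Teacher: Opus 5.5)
Your proposal is correct and follows essentially the paper's route. It uses the same term-by-term splitting of $R^m_n,R^b_n$ into dissipative parts, which gain from $\kappa_n=\lambda_n^{-2}$, and pressure and nonlinear commutator parts, which are estimated in $C^\varepsilon_x$ by interpolation; for \eqref{eq2.5}--\eqref{eq2.8} it uses the same crude $\lambda_n^{C}\ll\lambda_{n+1}$ bounds. The one difference is that you use the second-order Constantin--E--Titi commutator bound $\lambda_n^{-2\widetilde\beta}$, whereas the paper uses the first-order bound $\lambda_n^{-\widetilde\beta}$ (yielding $\lambda_n^{-\widetilde\beta(1-\varepsilon)+\varepsilon}$); that is exactly your fallback, and it already closes since $2\beta b^2<\widetilde\beta/2$. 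Also, it is $\mathcal R^m\nabla$ rather than $\mathcal R^m\nabla{\rm div}$ that is Calder\'on--Zygmund, but your bound $\kappa_n\lambda_n^{1+\varepsilon}$ already accounts for the extra derivative.
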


To this end, let us start with the most delicate $L^1_tC_x$-decay estimates \eqref{eq2.9} of $R^m_n$ and $R^b_n$, 
\begin{equation}\label{5.40}
    \begin{aligned}
        \|R^m_n\|_{L^1_tC_x}
        &\lesssim\lambda_n^{-2}\||\nabla|^{2\alpha_1-1}(\rho_n^{-1}m_n)\|_{L^1_tC_x^{\varepsilon}}+\lambda_n^{-2}\|{\rm div}(\rho_n^{-1}m_n)\|_{L^1_tC_x^{\varepsilon}}+\|P(\rho_n)-P_n\|_{L^1_tC_x^{\varepsilon}}\\
        &\quad+\|\rho_n^{-1}m_n\otimes m_n-(\rho^{-1}m\otimes  m)*_x\phi_{\lambda_n^{-1}}*_t\varphi_{\lambda_n^{-1}}\|_{L^1_tC_x^{\varepsilon}}\\
        &\quad+\|b_n\otimes b_n-\frac{1}{2}|b_n|^2\mathbb{I}-(b\otimes b-\frac{1}{2}|b|^2\mathbb{I})*_x\phi_{\lambda_n^{-1}}*_t\varphi_{\lambda_n^{-1}}\|_{L^1_tC_x^{\varepsilon}}\\
        &=:K_1+K_2+K_3+K_4+K_5,
    \end{aligned}
\end{equation}
and
\begin{equation}\label{5.41}
    \begin{aligned}
        \|R^b_n\|_{L^1_tC_x^{\varepsilon}}
        &\lesssim\|\rho_n^{-1}(b_n\otimes m_n-m_n\otimes b_n)-\left(\rho^{-1}(b\otimes m-m\otimes b)\right)*_x\phi_{\lambda_n^{-1}}*_t\varphi_{\lambda_n^{-1}}\|_{L^1_tC_x^{\varepsilon}}\\
        &\quad+\lambda_n^{-2}\||\nabla|^{2\alpha_2-1}b_n\|_{L^1_tC_x^{\varepsilon}}\\
        &=:I_1+I_2.
    \end{aligned}
\end{equation}
By the standard mollification estimates we have
\begin{equation}\label{eq5.42}
    0<c_1\leq\rho_n\leq c_2.
\end{equation}
By a similar technique of \eqref{eq4.50} and \eqref{eq4.51}, we obtain that for $\alpha_1,\alpha_2\in(0,\frac{1}{2}]$:
\begin{align}
    & K_1\lesssim\lambda_n^{-2}\left(\sum_{N_1+N_2=2}\|\rho_n^{-1}\|_{C_tC_x^{N_1}}\|m_n\|_{C_tC_x^{N_2}}\right)^{\frac{\varepsilon}{2}}\lesssim\lambda_n^{-2+\varepsilon},\label{eq5.43}\\  &I_2\lesssim\lambda_n^{-2}\|b_n\|_{C_tC_x^2}^{\frac{\varepsilon}{2}}\lesssim\lambda_n^{-2+\varepsilon},\label{eq5.44}
\end{align}
and for $\alpha_1,\alpha_2\in(\frac{1}{2},1)$:
\begin{align}
    & K_1\lesssim\lambda_n^{-2}\left(\sum_{N_1+N_2=2}\|\rho_n^{-1}\|_{C_tC_x^{N_1}}\|m_n\|_{C_tC_x^{N_2}}\right)^{\frac{2\alpha_1-1+\varepsilon}{2}}\lesssim\lambda_n^{2\alpha_1-3+\varepsilon},\label{eq5.45}\\
   &I_2\lesssim\lambda_n^{-2}\|b_n\|_{C_tC_x^2}^{\frac{2\alpha_2-1+\varepsilon}{2}}\lesssim\lambda_n^{2\alpha_2-3+\varepsilon}.\label{eq5.46}
\end{align}
Thus, we conclude from \eqref{eq5.43}--\eqref{eq5.46} that
\begin{equation}\label{eq5.47}
    K_1\lesssim\lambda_n^{-1+\varepsilon},\quad I_2\lesssim\lambda_n^{-1+\varepsilon},
\end{equation}
and, similarly,
\begin{equation}\label{eq5.48}
    K_2\lesssim \lambda_n^{-1+\varepsilon}.
\end{equation}

Furthermore, by interpolation we derive that
\begin{equation}\label{eq5.49}
\begin{aligned}
K_3& \lesssim\left\|P\left(\rho_{n}\right)-P_n\right\|_{C_{t, x}}^{1-\varepsilon}\left\|P\left(\rho_n\right)-P_n\right\|_{C_t C_x^1}^{\varepsilon} \\
& \lesssim\left(\left\|P\left(\rho_n\right)-P\left(\rho\right)\right\|_{C_{t, x}}+\left\|P\left(\rho\right)-P_n\right\|_{C_{t, x}}\right)^{1-\varepsilon}\left(\left\|P\left(\rho\right)\right\|_{C_t C_x^1}+\left\|P_n\right\|_{C_t C_x^1}\right)^{\varepsilon} \\
& \lesssim\left(\left\|\rho_n-\rho\right\|_{C_{t, x}}+\lambda_n^{-\widetilde{\beta}}\left\|P\left(\rho\right)\right\|_{C_{t, x}^{\widetilde{\beta}}}\right)^{1-\varepsilon}\lambda_n^{\varepsilon}\\
&\lesssim\left(\lambda_n^{-\widetilde{\beta}}\left\|\rho\right\|_{C_{t, x}^{\widetilde{\beta}}}+\lambda_n^{-\widetilde{\beta}}\left\|\rho\right\|_{C_{t, x}^{\widetilde{\beta}}}\right)^{1-\varepsilon}\lambda_n^{\varepsilon}\\
&\lesssim\lambda_n^{-\widetilde{\beta}(1-\varepsilon)+\varepsilon}.
\end{aligned}
\end{equation}

Now we concern the commutator parts $K_4$, $K_5$ and $I_1$. by a direct calculation, we have
\begin{equation}\label{eq5.50}
\begin{aligned}
K_4 &\lesssim  \left\|\rho_n^{-1} m_n \otimes m_n-\left(\rho^{-1} m \otimes m\right) *_x \phi_{\lambda_n^{-1}} *_t \varphi_{\lambda_n^{-1}}\right\|_{C_{t, x}}^{1-\varepsilon} \\
&\quad \times\left\|\rho_n^{-1} m_n \otimes m_n-\left(\rho^{-1} m \otimes m\right) *_x \phi_{\lambda_n^{-1}} *_t \varphi_{\lambda_n^{-1}}\right\|_{C_t C_x^1}^{\varepsilon} \\
&=: K_{41}^{1-\varepsilon} \times K_{42}^{\varepsilon},
\end{aligned}
\end{equation}
Using the standard mollification estimate we obtain
\begin{equation}\label{eq5.51}
\begin{aligned}
K_{41} &\lesssim  \left\|\rho_n^{-1} m_n \otimes\left(m_n-m\right)\right\|_{C_{t, x}}+\left\|\rho_n^{-1}\left(m_n-m\right) \otimes m\right\|_{C_{t, x}}+\left\|\left(\rho_n^{-1}-\rho^{-1}\right) m \otimes m\right\|_{C_{t, x}} \\
& \quad+\left\|\rho^{-1} m \otimes m-\left(\rho^{-1} m \otimes m\right) *_x \phi_{\lambda_n^{-1}} *_t\varphi_{\lambda_n^{-1}}\right\|_{C_{t, x}} \\
&\lesssim  \left\|\rho_n^{-1}\right\|_{C_{t, x}}\left\|m_n-m\right\|_{C_{t, x}}\left(\left\|m_n\right\|_{C_{t, x}}+\|m\|_{C_{t, x}}\right)\\
&\quad+\left\|\rho_n^{-1}-\rho^{-1}\right\|_{C_{t, x}}\|m\|_{C_{t, x}}^2+\lambda_n^{-\widetilde{\beta}}\left\|\rho^{-1} m \otimes m\right\|_{C_{t, x}^{\widetilde{\beta}}} \\
&\lesssim  \lambda_n^{-\widetilde{\beta}}\left\|\rho_n^{-1}\right\|_{C_{t, x}}\|m\|_{C_{t, x}^{\widetilde{\beta}}}\|m\|_{C_{t, x}}+\lambda_n^{-\widetilde{\beta}}\|\rho\|_{C_{t, x}^{\widetilde{\beta}}}\|m\|_{C_{t, x}}^2+\lambda_n^{-\widetilde{\beta}}\left\|\rho^{-1}\right\|_{C_{t, x}^{\widetilde{\beta}}}\|m\|_{C_{t, x}^{\widetilde{\beta}}}^2 \\
&\lesssim  \lambda_n^{-\widetilde{\beta}},
\end{aligned}
\end{equation}
and
\begin{equation}\label{eq5.52}
\begin{aligned}
K_{42} & \lesssim \sum_{N_1+N_2+N_3=1}\left\|\rho_n^{-1}\right\|_{C_t C_x^{N_1}}\left\|m_n\right\|_{C_t C_x^{N_2}}\left\|m_n\right\|_{C_t C_x^{N_3}}+\lambda_n\left\|\rho^{-1} m \otimes m\right\|_{C_{t, x}} \\
& \lesssim \lambda_n .
\end{aligned}
\end{equation}
Therefore, combining \eqref{eq5.50}--\eqref{eq5.52} we get
\begin{equation}\label{eq5.53}
    K_4\lesssim\lambda_n^{-\widetilde{\beta}(1-\varepsilon)+\varepsilon}.
\end{equation}
Moreover, similarly, we also have
\begin{equation}\label{eq5.54}
    K_5\lesssim\lambda_n^{-\widetilde{\beta}(1-\varepsilon)+\varepsilon},\quad  I_1\lesssim\lambda_n^{-\widetilde{\beta}(1-\varepsilon)+\varepsilon}.
\end{equation}
Thus, plugging \eqref{eq5.47}--\eqref{eq5.49}, \eqref{eq5.53} and \eqref{eq5.54} into \eqref{5.40} and \eqref{5.41}, and using \eqref{eq5.36s} and \eqref{eq5.35s} we arrive at
\begin{equation}   \|R^m_n\|_{L^1_tC_x}+\|R^b_n\|_{L^1_tC_x}\lesssim\lambda_n^{-1+\varepsilon}+\lambda_n^{-\widetilde{\beta}(1-\varepsilon)+\varepsilon}\leq \delta_{n+2},
\end{equation}
which means that \eqref{eq2.9} is valid at level $n+1$.

Now we turn to the inductive estimates of \eqref{eq2.5}--\eqref{eq2.8}, by \eqref{eq5.42} we get
\begin{equation}\label{eq5.56}
    \frac{c_1}{2}-\lambda_{n+1}^{-\beta} \leq \rho_n \leq \frac{c_2}{2}+\lambda_{n+1}^{-\beta}.
\end{equation}
Moreover, for $1 \leq N \leq 4$ and $M=0,1$, by a direct calculation, we have
\begin{equation}\label{eq5.57}
    \left\|\partial_t^M \rho_n\right\|_{C_t C_x^N} \lesssim \lambda_n^{M+N}\|\rho\|_{C_{t, x}} \lesssim \lambda_n^{1+N} \leq \lambda_{n+1}^{\frac{\varepsilon}{4}}.
\end{equation}
Hence, \eqref{eq2.5} and \eqref{eq2.6} are verified at level $n+1$.

Moreover, we also see that
\begin{equation}\label{eq5.58}
    \left\|m_n\right\|_{C_{t, x}^N}+\left\|b_n\right\|_{C_{t, x}^N} \lesssim \lambda_n^N\left(\|m\|_{C_{t, x}}+\left\|b_n\right\|_{C_{t, x}^N}\right) \lesssim \lambda_n^N \ll \lambda_{n+1}^{2 N+2},
\end{equation}
which yields \eqref{eq2.7} at level $n+1$.

Finally, for the $C^1_{t,x}$-estimates \eqref{eq2.8}, in view of the Sobolev embedding $W_{t, x}^{1, 5} \hookrightarrow L_{t, x}^{\infty}$, we obtain
\begin{equation}\label{eq5.59}
\begin{aligned}
\left\|R^m_n\right\|_{C_{t, x}^1} 
&\lesssim \left\|P\left(\rho_n\right)\right\|_{W_{t, x}^{2, 5}}+\left\|P(\rho) *_x \phi_{\lambda_n^{-1} }*_t \varphi_{\lambda_n^{-1}}\right\|_{W_{t, x}^{2, 5}}+\left\|\lambda_n^{-2}|\nabla|^{2 \alpha_1-1}\left(\rho_n^{-1} m_n\right)\right\|_{W_{t, x}^{2, 5}} \\
&\quad +\left\|\lambda_n^{-2} {\rm div}\left(\rho_n^{-1} m_n\right)\right\|_{W_{t, x}^{2, 5}}+\left\|\rho_n^{-1} m_n \otimes m_n\right\|_{W_{t, x}^{2, 5}}+\left\|\left(\rho^{-1} m \otimes m\right) *_x \phi_{\lambda_n^{-1}}*_t \varphi_{\lambda_n^{-1}}\right\|_{W_{t, x}^{2, 5}} \\
&\quad+\left\| b_n \otimes b_n-\frac{1}{2}|b_n|^2\mathbb{I}\right\|_{W_{t, x}^{2, 5}}+\left\|\left(b \otimes b-\frac{1}{2}|b|^2\mathbb{I}\right) *_x \phi_{\lambda_n^{-1}}*_t \varphi_{\lambda_n^{-1}}\right\|_{W_{t, x}^{2, 5}} \\
&\lesssim \left\|P\left(\rho_n\right)\right\|_{C_{t, x}^2}+\lambda_n^2\|P(\rho)\|_{C_{t, x}}+\lambda_n^{-2}\left\||\nabla|^{2 \alpha-1}\left(\rho_n^{-1} m_n\right)\right\|_{W_{t, x}^{2, 5}}+\lambda_n^{-2}\left\|\rho_n^{-1} m_n\right\|_{C_{t, x}^3} \\
&\quad +\left\|\rho_n^{-1} m_n \otimes m_n\right\|_{C_{t, x}^2}+\lambda_n^2\left\|\rho^{-1} m \otimes m\right\|_{C_{t, x}}\\
&\quad+\left\| b_n \otimes b_n-\frac{1}{2}|b_n|^2\mathbb{I}\right\|_{C^2_{t,x}}+\lambda_n^2\left\|b \otimes b-\frac{1}{2}|b|^2\mathbb{I}\right\|_{C_{t,x}} \\
&\lesssim \left\|P\left(\rho_n\right)\right\|_{C_{t, x}^2}+\lambda_n^2\|P(\rho)\|_{C_{t, x}}+\lambda_n^{-2 }\left\||\nabla|^{2 \alpha-1}\left(\rho_n^{-1} m_n\right)\right\|_{W_{t, x}^{2, 5}}\\
&\quad+\lambda_n^{-2} \sum_{N_1+N_2 \leq 3}\left\|\rho_n^{-1}\right\|_{C_{t, x}^{N_1}}\left\|m_n\right\|_{C_{t, x}^{N_2}}+\sum_{N_1+N_2+N_3 \leq 2}\left\|\rho_n^{-1}\right\|_{C_{t, x}^{N_1}}\left\|m_n\right\|_{C_{t, x}^{N_2}}\left\|m_n\right\|_{C_{t, x}^{N_3}}\\
&\quad+\sum_{N_1+N_2 \leq 2}\left\|b_n\right\|_{C_{t, x}^{N_1}}\left\|b_n\right\|_{C_{t, x}^{N_2}}\\
&\quad+\lambda_n^2\left\|\rho^{-1} m \otimes m\right\|_{C_{t, x}}+\lambda_n^2\left\|b \otimes b-\frac{1}{2}|b|^2\mathbb{I}\right\|_{C_{t,x}},
\end{aligned}
\end{equation}
and
\begin{equation}\label{eq5.60}
\begin{aligned}
\left\|R^b_n\right\|_{C_{t, x}^1} 
&\lesssim\left\|\lambda_n^{-2}|\nabla|^{2 \alpha_2-1}\left(b_n\right)\right\|_{W_{t, x}^{2, 5}}+\left\|\rho_n^{-1} (b_n \otimes m_n-m_n\otimes b_n)\right\|_{W_{t, x}^{2, 5}}\\
&\quad+\left\|\left(\rho^{-1} (b\otimes m-m\otimes b)\right) *_x \phi_{\lambda_n^{-1}}*_t \varphi_{\lambda_n^{-1}}\right\|_{W_{t, x}^{2, 5}} \\
&\lesssim \lambda_n^{-2}\left\||\nabla|^{2 \alpha_2-1}\left(b_n\right)\right\|_{W_{t, x}^{2, 5}}+\left\|\rho_n^{-1} (b_n \otimes m_n-m_n\otimes b_n)\right\|_{C^2_{t,x}}\\
&\quad+\lambda_n^{2}\left\|\left(\rho^{-1} (b\otimes m-m\otimes b)\right) *_x \phi_{\lambda_n^{-1}}*_t \varphi_{\lambda_n^{-1}}\right\|_{C_{t,x}} \\
&\lesssim \lambda_n^{-2}\left\||\nabla|^{2 \alpha_2-1}\left(b_n\right)\right\|_{W_{t, x}^{2, 5}}+\sum_{N_1+N_2+N_3 \leq 2}\left\|\rho_n^{-1}\right\|_{C_{t, x}^{N_1}}\left\|b_n\right\|_{C_{t, x}^{N_2}}\left\|m_n\right\|_{C_{t, x}^{N_3}}\\
&\quad+\lambda_n^{2}\left\|\left(\rho^{-1} (b\otimes m-m\otimes b)\right) *_x \phi_{\lambda_n^{-1}}*_t \varphi_{\lambda_n^{-1}}\right\|_{C_{t,x}}.
\end{aligned}
\end{equation}
By \eqref{eq1.3} and standard mollification estimates, we have
\begin{equation}\label{eq5.61}
    \begin{aligned}
\left\|P\left(\rho_n\right)\right\|_{C_{t, x}^2} \lesssim & \left\|P\left(\rho_n\right)\right\|_{C_{t, x}}+\left\|P^{\prime}\left(\rho_n\right)\right\|_{C_{t, x}}\left(\left\|\rho_n\right\|_{C_t C_x^2}+\left\|\rho_n\right\|_{C_t^2 C_x}+\left\|\rho_n\right\|_{C_t^1 C_x^1}\right) \\
& +\left\|P^{\prime \prime}\left(\rho_n\right)\right\|_{C_{t, x}}\left(\left\|\rho_n\right\|_{C_t C_x^1}^2+\left\|\rho_n\right\|_{C_t^1 C_x}^2\right) \\
\lesssim & \lambda_n^2.
\end{aligned}
\end{equation}

For the case $\alpha_1,\alpha_2 \in\left(0, \frac{1}{2}\right]$, 
\begin{align}
    &\begin{aligned}
        \left\||\nabla|^{2 \alpha_1-1}\left(\rho_n^{-1} m_n\right)\right\|_{W_{t, x}^{2,5}} &\lesssim\left\|\rho_n^{-1} m_n\right\|_{C_{t, x}^2}\\ &\lesssim \sum_{N_1+N_2 \leq 2}\left\|\rho_n^{-1}\right\|_{C_{t, x}^{N_1}}\left\|m_n\right\|_{C_{t, x}^{N_2}}\\
        &\lesssim \sum_{N_1+N_2 \leq 2} \lambda_n^{N_1+N_2} \lesssim \lambda_n^2,
    \end{aligned}\label{eq5.62}\\
    &\left\||\nabla|^{2 \alpha_2-1}\left(b_n\right)\right\|_{W_{t, x}^{2,5}} \lesssim\left\|b_n\right\|_{C_{t, x}^2}\lesssim \lambda_n^2.\label{eq5.63}
\end{align}
On the other hand, if $\alpha_1,\alpha_2 \in\left(\frac{1}{2}, 1\right)$, an application of the interpolation inequality gives
\begin{equation}\label{eq5.64}
    \begin{aligned}
\left\||\nabla|^{2 \alpha_1-1}\left(\rho_n^{-1} m_n\right)\right\|_{W_{t, x}^{2, 5}}
& \lesssim\left\|\rho_n^{-1} m_n\right\|_{C_{t, x}^{2}}^{2-2 \alpha_1}\left\|\rho_n^{-1} m_n\right\|_{C_{t, x}^3}^{2 \alpha_1-1} \\
& \lesssim\left(\sum_{N_1+N_2 \leq 2}\left\|\rho_n^{-1}\right\|_{C_{t, x}^{N_1}}\left\|m_n\right\|_{C_{t, x}^{N_2}}\right)^{2-2 \alpha_1}\\
&\quad\times\left(\sum_{N_1+N_2 \leq 3}\left\|\rho_n^{-1}\right\|_{C_{t, x}^{N_1}}\left\|m_n\right\|_{C_{t, x}^{N_2}}\right)^{2 \alpha_1-1} \\
& \lesssim\left(\sum_{N_1+N_2 \leq 2} \lambda_n^{N_1+N_2}\right)^{2-2 \alpha_1}\left(\sum_{N_1+N_2 \leq 3} \lambda_n^{N_1+N_2}\right)^{2 \alpha_1-1} \\
&\lesssim \lambda_n^{2 \alpha_1+1},
\end{aligned}
\end{equation}
and
\begin{equation}\label{eq5.65}
\left\||\nabla|^{2 \alpha_2-1}\left(b_n\right)\right\|_{W_{t, x}^{2, 5}}
\lesssim\left\|b_n\right\|_{C_{t, x}^{2}}^{2-2 \alpha_2}\left\|b_n\right\|_{C_{t, x}^3}^{2 \alpha_2-1}\lesssim\lambda_n^{2\alpha_2+1}.
\end{equation}
Plugging \eqref{eq5.61}--\eqref{eq5.65}, we deduce that
\begin{equation}
   \|R^m_n\|_{C^1_{t,x}}+\|R^b_n\|_{C^1_{t,x}}\lesssim\lambda_n^2,
\end{equation}
which verifies \eqref{eq2.8} at level $n+1$.

Then, by the main iteration Theorem \ref{thm-main-iteration} we can obtain a sequence of relaxed solutions $\{(\rho_{n,q},m_{n,q},b_{n,q})\}_{q\geq0}$ to \eqref{eq5.37}. Let $q\rightarrow+\infty$ we obtain a weak solution $(\rho^{(n)},m^{(n)},b^{(n)})\in C_{t,x}\times (H^{\beta^{\prime}}_tC_x)^2$ to \eqref{eq1.4} with parameters $\kappa_n\nu^s$, $\kappa_n\nu^b$ and $\kappa_n\eta$ for some $0<\beta^{\prime}<{\rm min}\{\widetilde{\beta},\beta/(8+\beta)\}$.

Furthermore, we can deduce from \eqref{eq2.7}, \eqref{eq2.12} that
\begin{equation}\label{eq5.67}
\begin{aligned}
\left\|m^{(n)}-m\right\|_{H_t^{\beta^{\prime}} C_x} & \leq\left\|m^{(n)}-m_n\right\|_{H_t^{\beta^{\prime}} C_x}+\left\|m-m_n\right\|_{H_t^{\beta^{\prime}} C_x} \\
& \lesssim \sum_{q=n+1}^{\infty}\left\|m_{n, q+1}-m_{n, q}\right\|_{L_t^2 C_x}^{1-\beta^{\prime}}\left\|m_{n, q+1}-m_{n, q}\right\|_{C_{t, x}^1}^{\beta^{\prime}}+\left\|m-m_n\right\|_{C_t^{\beta^{\prime}} C_x} \\
& \lesssim \sum_{q=n+1}^{\infty} \lambda_{q+1}^{-\beta\left(1-\beta^{\prime}\right)} \lambda_{q+1}^{8 \beta^{\prime}}+\lambda_n^{-\left(\widetilde{\beta}-\beta^{\prime}\right)}\|m\|_{C_{t, x}^{\widetilde{\beta}}} \\
& \lesssim \frac{a^{n b\left(-\beta\left(1-\beta^{\prime}\right)+8 \beta\right)}}{a^{n b\left(\beta\left(1-\beta^{\prime}\right)-8 \beta\right)}-1}+a^{-\left(\widetilde{\beta}-\beta^{\prime}\right) b n} \leq \frac{1}{n},
\end{aligned}
\end{equation}
as well as
\begin{equation}\label{eq5.68}
    \left\|b^{(n)}-b\right\|_{H_t^{\beta^{\prime}} C_x} \lesssim\frac{1}{n},
\end{equation}
and, via \eqref{eq2.11},
\begin{equation}\label{eq5.69}
\begin{aligned}
\left\|\rho^{(n)}-\rho\right\|_{C_{t, x}} & \leq\left\|\rho^{(n)}-\rho_n\right\|_{C_{t, x}}+\left\|\rho-\rho_n\right\|_{C_{t, x}} \\
& \lesssim \sum_{q=n+1}^{\infty}\left\|\rho_{n, q+1}-\rho_{n, q}\right\|_{C_{t, x}}+\left\|\rho-\rho_n\right\|_{C_{t, x}} \\
& \lesssim \sum_{q=n+1}^{\infty} \delta_{q+2}^{\frac{1}{2}}+\lambda_n^{-\widetilde{\beta}}\|\rho\|_{C_{t, x}^{\widetilde{\beta}}} \\
& \lesssim \sum_{q=n+3}^{\infty} a^{-\beta b^q}+\lambda_n^{-\widetilde{\beta}} \lesssim \frac{a^{-\beta b(n+2)}}{a^{\beta b}-1}+\lambda_n^{-\widetilde{\beta}} \leq \frac{1}{n},
\end{aligned}
\end{equation}
where the last step is valid for $a$ large enough.

At last, letting $n\rightarrow+\infty$ we obtain the strong convergence \eqref{eq1.8s} and complete the proof of Theorem \ref{thm-vanishing limit}.
\hfill$\square$
\newpage

\bibliography{ref}
\bibliographystyle{acm}
\end{document}